\newtheorem{theorem}{Theorem}[section]
\newtheorem{proposition}[theorem]{Proposition}
\newtheorem{lemma}[theorem]{Lemma}
\newtheorem{corollary}[theorem]{Corollary}
\theoremstyle{remark}
\numberwithin{equation}{section}
\begin{document}

\title[Elliptic Ruijsenaars operators and Wess-Zumino-Witten fusion rings]
{Elliptic Ruijsenaars difference operators, symmetric polynomials, and Wess-Zumino-Witten fusion rings}

\author{Jan Felipe  van Diejen}

\address{
Instituto de Matem\'aticas, Universidad de Talca,
Casilla 747, Talca, Chile}

\email{diejen@inst-mat.utalca.cl}

\author{Tam\'as G\"orbe}

\address{School of Mathematics, University of Leeds, Leeds LS2 9JT,
UK}

\email{T.Gorbe@leeds.ac.uk}

\subjclass[2010]{Primary: 05E05; Secondary: 05E10, 33D52, 81T40, 81T45, 81Q80}

\keywords{symmetric functions, elliptic Ruijsenaars system, Macdonald polynomials, Wess-Zumino-Witten fusion ring, Verlinde algebra.}

\date{June 2021}

\begin{abstract} 
The fusion ring for $\widehat{\mathfrak{su}}(n)_m$ Wess-Zumino-Witten conformal field theories is known to be isomorphic to a factor ring
of the ring of symmetric polynomials presented by Schur polynomials. We introduce a deformation of this factor ring 
associated with eigenpolynomials for the elliptic Ruijsenaars difference operators. 
The corresponding Littlewood-Richardson coefficients are governed by a Pieri rule stemming from the eigenvalue equation.
The orthogonality of the eigenbasis gives rise to an analog of the Verlinde formula.
In the trigonometric limit, our construction recovers the refined $\widehat{\mathfrak{su}}(n)_m$ Wess-Zumino-Witten fusion ring associated with the Macdonald polynomials.
\end{abstract}

\maketitle

\section{Introduction}\label{sec1}
It is well-known that the structure constants for the ring of symmetric polynomials in $n$ variables in the basis of Schur polynomials $s_\lambda(x_1,\ldots ,x_n)$, the Littlewood-Richardson coefficients, count the tensor multiplicities in the decomposition of tensor products of irreducible representations for the Lie algebra $\mathfrak{su}(n;\mathbb{C})$
(cf. e.g. \cite{mac:symmetric,pro:lie}). 
Here the partitions $\lambda=(\lambda_1,\ldots,\lambda_n)$ label the dominant weights and one divides out by an ideal generated by the relation $x_1x_2\cdots x_n=1$.
If additionally---upon fixing a level $m\in\mathbb{N}$---the ideal
generated by the Schur polynomials corresponding to partitions such that $\lambda_1-\lambda_n=m+1$ is divided out, then one arrives at a finite-dimensional factor ring that is isomorphic to
the fusion ring for $\widehat{\mathfrak{su}}(n)_m$ Wess-Zumino-Witten conformal field theories  
\cite{dif-mat-sen:conformal,fuc:affine,gep:fusion,goo-wen:littlewood,kac:infinite,kor-str:slnk}.  The factor ring in question is also often referred to as the Verlinde algebra in the literature
and it has a natural basis of Schur classes labeled by partitions $\lambda$ with $\lambda_1-\lambda_n\leq m$ (which encode the dominant weights of the underlying affine Lie algebra $\widehat{\mathfrak{su}}(n)_m$).
The corresponding structure constants, i.e. the affine counterparts of the Littlewood-Richard coefficients at level $m$, describe the fusion rules for  primary fields of the associated conformal field theories.  From a mathematical point of view these structure constants compute, cf.  e.g. \cite{goo-nak:fusion},  the Littlewood-Richardson coefficients
of Hecke algebras at roots of unity \cite{goo-wen:littlewood} and the dimensions of spaces of conformal blocks of three-point functions
in Wess-Zumino-Witten conformal field theories \cite{tsu-ue-yam:conformal}.

Generalizations of the  $\widehat{\mathfrak{su}}(n)_m$ fusion ring have been constructed by means of parameter deformations of the Schur polynomials; important examples arise  this way from the Hall-Littlewood polynomials  \cite{kor:cylindric} and from the Macdonald polynomials  \cite{aga-sha:knot,che:double,kir:inner,nak:refined}. While the deformed fusion ring stemming from the Hall-Littlewood polynomials has been related to a deformation of the Verlinde algebra appearing in connection with an index formula of Teleman and Woodward
 \cite{and-guk-pei:verlinde,oku-yos:gauged,tel:k-theory,tel-woo:index}, in the case of the Macdonald polynomials one deals rather with a Verlinde algebra that is intimately connected to the computation of refined Chern-Simons  invariants for torus knots \cite{aga-sha:knot,che:daha,hag:combinatorics,gor-neg:refined,nak:refined}. 
 
From the point of view of quantum integrable particle dynamics, the Verlinde algebra for $\widehat{\mathfrak{su}}(n)_m$ Wess-Zumino-Witten conformal field theories
can be interpreted as a Hilbert space for  the phase model of impenetrable bosons on the one-dimensional periodic lattice  \cite{kor-str:slnk}. From this perspective, the deformations of the Verlinde algebra associated with the Hall-Littlewood polynomials and with the Macdonald polynomials correspond in turn to Hilbert spaces for the periodic $q$-boson model \cite{kor:cylindric} and for the quantized
trigonometric Ruijsenaars-Schneider system on $\mathbb{C}\mathbb{P}^{n-1}$ \cite{die-vin:quantum,gor-hal:quantization} (cf. also \cite{blo-des-mat:supersymmetric}), respectively.

The idea of the present work is to introduce a further generalization of the Verlinde algebra originating from the \emph{elliptic} Ruijsenaars operators; these are commuting difference operators 
with coefficients built from products of Weierstrass' sigma functions
\cite{rui:complete,rui:systems} that reduce to the Macdonald difference operators
\cite{mac:symmetric} in the trigonometric limit. The study of their eigenfunctions points towards an
elliptic counterpart of Macdonald's theory of symmetric polynomials \cite{eti-kir:affine,lan-nou-shi:construction,mir-mor-zen:duality,rai-sun-var:affine}. 
Following \cite{die-gor:elliptic}, we will discretize the Ruijsenaars operators on a lattice of points labeled by partitions.  Specifically, the lattice points are given by the $\mathfrak{su}(n)$ dominant weights shifted by a Weyl vector that is rescaled linearly (as usual) with the multiplicity (or coupling) parameter.
The eigenfunctions of these discrete Ruijsenaars operators  are subsequently constructed in terms of polynomials determined by a recurrence stemming from the eigenvalue equation.
The polynomials at issue turn out to provide a basis for the ring of symmetric polynomials. The associated Littlewood-Richardson coefficients constitute an elliptic deformation of Macdonald's $(q,t)$-Littlewood-Richardson coefficients \cite{mac:symmetric}. In this context, the eigenvalue equation for the elliptic Ruijsenaars operator entails  an explicit formula for the elliptic deformation of the Pieri rule. 
Next, we construct the corresponding elliptic analog of the fusion ring and compute its structure constants in terms of elliptic Littlewood-Richardson coefficients.
Now this deformed Verlinde algebra constitutes the Hilbert space for the
compact elliptic  Ruijsenaars model on $\mathbb{C}\mathbb{P}^{n-1}$, whose classical and quantum dynamics was studied in \cite{feh-gor:trigonometric} and \cite{die-gor:elliptic}, respectively.
This particle interpretation gives rise to an orthogonality relation for the eigenpolynomials of the elliptic Ruijsenaars operators \cite{die-gor:elliptic}, from which we derive a Verlinde formula for the structure constants upon identifying the pertinent elliptic deformation of the $\widehat{\mathfrak{su}}(n)_m$ Kac-Peterson modular $S$-matrix \cite{kac:infinite}. 

The material is organized as follows.
We start by discretizing the elliptic Ruijsenaars operators onto partitions in Section \ref{sec2}.
Next, in Section \ref{sec3},
we construct an eigenbasis for these discrete Ruijsenaars operators; this gives rise to an elliptic deformation of Littlewood-Richardson coefficients generated by
explicit Pieri rules.
By dividing out the ideal generated by basis polynomials labeled by partitions with $\lambda_1-\lambda_n=m+1$, we arrive  in Section \ref{sec4} at  an elliptic deformation of the
 $\widehat{\mathfrak{su}}(n)_m$ Wess-Zumino-Witten  fusion ring; we compute its structure constants in terms of elliptic Littlewood-Richardson coefficients.
In Section \ref{sec5},  a Verlinde formula for these structure constants is presented in terms of
the relevant elliptic deformation of the Kac-Peterson modular $S$-matrix. We wrap up by detailing how the structure constants for the 
 $\widehat{\mathfrak{su}}(n)_m$ Wess-Zumino-Witten  fusion ring and its refined deformation associated with the Macdonald polynomials are recovered from their elliptic counterparts via parameter degenerations.

\section{Discrete Ruijsenaars operators on partitions}\label{sec2}

\subsection{Ruijsenaars' commuting difference operators}
The Ruijsenaars difference operators $D_1,\ldots ,D_n$ are of the form
\begin{equation}\label{Dr}
D_{r}=\sum_{\substack{J\subset\{1,\dots,n\}\\|J|=r}} V_J(x)
T_J , \quad V_J(x)= \prod_{\substack{j\in J\\k\notin J}}\frac{[x_j-x_k+ \mathrm{g} ]}{[x_j-x_k]}   \qquad (r=1,\ldots ,n).
\end{equation}
Here $|\cdot |$ denotes the cardinality of the set in question and $T_J$ acts by translation on  complex functions $f(x)=f(x_1,\ldots,x_{n})$:
\begin{equation*}
(T_Jf)(x)=f(x+ \varepsilon_J)\quad\text{with}\ \varepsilon_J=\sum_{j\in J} \varepsilon_j 
\end{equation*}
(where $\varepsilon_1,\ldots ,\varepsilon_{n}$ refer to the standard unit basis of $\mathbb{C}^{n}$). 
For $\mathrm{g}\in\mathbb{C}$, these difference operators commute if the coefficients are built from a function
$[z]$ that factorizes into the product of a Weierstrass sigma function 
$\sigma (z)$ and a Gaussian of the form $\exp (az^2+bz)$ (for any $a,b\in\mathbb{C}$)  \cite{rui:complete,rui:systems}.
For our purposes it is convenient to pick  
 \begin{subequations}
 \begin{equation}\label{scaled-theta}
[z]=[z;p]=\dfrac{\vartheta_1(\frac{\alpha}{2}z ;p)}{\frac{\alpha}{2}\vartheta'_1(0;p)}\qquad (z\in \mathbb{C},\, \alpha >0, \ 0<p<1),
\end{equation}
where $\vartheta_1$  denotes the Jacobi theta function
\begin{align}\label{theta}
 \vartheta_1(z;p)&=2\sum_{l\geq 0} (-1)^l p^{(l+\frac{1}{2})^2}\sin(2l+1)z,\\
&=2p^{1/4}\sin(z)\prod_{l\geq 1} (1-p^{2l})(1-2p^{2l}\cos(2z)+p^{4l} ). \nonumber
\end{align}
\end{subequations}
The conversion to the Weierstrass sigma function associated with the period  lattice $\Omega=2\omega_1 \mathbb{Z}+2\omega_2 \mathbb{Z}$ is governed by the relation
(cf. e.g. \cite[\S 23.6(i)]{olv-loz-boi-cla:nist}):
\begin{equation*}
[z;p]= \sigma (z) e^{-\tfrac{\zeta (\omega_1)}{2\omega_1}z^2} 
\end{equation*}
with $\alpha=\frac{\pi}{\omega_1}$, $p=e^{i\pi \tau}$, $\tau= \frac{\omega_2}{\omega_1} $, and $\zeta(z)=\frac{\sigma^\prime(z)}{\sigma(z)}$.
 Below we will often use that $[z;p]$ extends analytically in $p$ to the interval $-1<p<1$ with $[z;0]=\frac{2}{\alpha}\sin (\frac{\alpha z}{2})$.

\subsection{Discretization on partitions}
Let $\lambda=(\lambda_1,\ldots,\lambda_n)$ denote a partition of length $\ell(\lambda)\leq n$, i.e. $\lambda$ belongs to
\begin{equation}
\Lambda^{(n)}=\{  \lambda\in\mathbb{Z}^n \mid \lambda_1\geq\lambda_2\geq\cdots\geq\lambda_n\geq 0\} .
\end{equation}
Unless explicitly stated otherwise, it will be assumed that the value of  $\mathrm{g}$ is chosen generically in $\mathbb{R}$ such that 
\begin{subequations}
\begin{equation}\label{g-regular}
\boxed{j\mathrm{g}\not\in \mathbb{Z}_{\leq 0}+{\textstyle \frac{2\pi}{\alpha}}\mathbb{Z}\quad\text{for}\  j=1,\ldots ,n}
\end{equation}
(where $\mathbb{Z}_{\leq 0}=\mathbb{Z}\setminus\mathbb{N}=\{0,-1,-2,-3,\ldots \}$), which
ensures in particular that
\begin{equation}\label{denom-regular}
 \prod_{1\leq j<k\leq n}{\textstyle  [ \lambda_j-\lambda_k+(k-j)\mathrm{g} ] } \neq 0\qquad (\forall\lambda\in\Lambda^{(n)}).
\end{equation}
\end{subequations}
The following lemma now allows us to restrict $D_r$ \eqref{Dr} to a discrete difference operator acting on functions supported on partitions shifted by
\begin{equation}
\rho_\mathrm{g}=\bigl( (n-1)\mathrm{g},(n-2)\mathrm{g},\ldots ,\mathrm{g},0\bigr) .
\end{equation}

\begin{lemma}[Boundary Condition]\label{discretization:lem}
For any $\lambda\in\Lambda^{(n)}$ and $J\subset \{ 1,\ldots ,n\}$ (with $\mathrm{g}$ generic as detailed above), one has that
\begin{equation}
V_J(\rho_\mathrm{g} +\lambda)=0\quad\text{if}\ \lambda+\varepsilon_J\not\in \Lambda^{(n)} .
\end{equation}
\end{lemma}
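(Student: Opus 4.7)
The plan is to work directly from the formula for $V_J$ after substituting $x=\rho_\mathrm{g}+\lambda$. Note that the components are $x_j=(n-j)\mathrm{g}+\lambda_j$, so for any pair of indices one has
\begin{equation*}
x_j-x_k=(k-j)\mathrm{g}+(\lambda_j-\lambda_k).
\end{equation*}
The strategy is to (i) combinatorially pinpoint what makes $\lambda+\varepsilon_J$ fail to be a partition, (ii) exhibit a specific vanishing factor in the numerator of $V_J(\rho_\mathrm{g}+\lambda)$, and (iii) argue that the denominator remains nonzero by virtue of the genericity assumption \eqref{g-regular}.

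For (i), the only way adding $\varepsilon_J$ can break the weakly-decreasing condition is across a boundary of $J$: specifically, $\lambda+\varepsilon_J\notin\Lambda^{(n)}$ if and only if there exists an index $j$ with $j\notin J$, $j+1\in J$, and $\lambda_j=\lambda_{j+1}$ (all other adjacency cases preserve or strengthen the inequality, and the positivity condition $\lambda_n\geq 0$ is automatic). Having located such an index, step (ii) is immediate: the pair $(j+1,j)$ with $j+1\in J$ and $j\notin J$ contributes the factor
\begin{equation*}
[x_{j+1}-x_j+\mathrm{g}]=[-\mathrm{g}+(\lambda_{j+1}-\lambda_j)+\mathrm{g}]=[0]=0,
\end{equation*}
since $\vartheta_1(0;p)=0$ forces $[0;p]=0$.

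For step (iii), I would check separately the two orientations of the denominator factors $[x_j-x_k]$ with $j\in J,\,k\notin J$. When $j<k$, the argument equals $(\lambda_j-\lambda_k)+(k-j)\mathrm{g}$ with $\lambda_j-\lambda_k\in\mathbb{Z}_{\geq 0}$, and non-vanishing is supplied directly by \eqref{denom-regular}. When $j>k$, the argument is $(\lambda_j-\lambda_k)-(j-k)\mathrm{g}$ with $\lambda_j-\lambda_k\in\mathbb{Z}_{\leq 0}$; a vanishing would force $(j-k)\mathrm{g}\in\mathbb{Z}_{\leq 0}+\frac{2\pi}{\alpha}\mathbb{Z}$ (using that $[z;p]$ vanishes on $\frac{2\pi}{\alpha}\mathbb{Z}+\frac{2\pi}{\alpha}\tau\mathbb{Z}$ and that this lattice is symmetric under negation), contradicting \eqref{g-regular}. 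Combining steps (i)--(iii) yields $V_J(\rho_\mathrm{g}+\lambda)=0$.

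I expect step (iii) to be the main, though still modest, obstacle: one has to be careful that the denominator-zero condition truly falls inside the forbidden set prescribed by \eqref{g-regular}, handling both the case where $\lambda_j-\lambda_k$ is nonpositive and the elliptic periods properly. Once that check is in place, the lemma is essentially a one-line cancellation from a single vanishing theta factor in the numerator.
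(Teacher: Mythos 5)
Your proposal is correct and follows the same route as the paper's own proof: locate the adjacent indices $j\notin J$, $j+1\in J$ forced to satisfy $\lambda_j=\lambda_{j+1}$, extract the vanishing numerator factor $[x_{j+1}-x_j+\mathrm{g}]=[0]=0$, and rule out zeros in the denominator via the genericity of $\mathrm{g}$. The only difference is that you spell out the denominator check for both orientations of the pair $(j,k)$, which the paper compresses into a single citation of Eq.~\eqref{denom-regular} (the $j>k$ case reducing to it by the oddness of $[\,\cdot\,]$).
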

\begin{proof}
If $\lambda \in\Lambda^{(n)}$ and $\mu=\lambda+\varepsilon_J\not\in \Lambda^{(n)}$
then $\mu_j-\mu_{j+1}<0$ for some $1\leq j<n$, which implies that  $j\not\in J$,  $j+1\in J$ and $\lambda_j-\lambda_{j+1}=0$.
Since the denominators do not vanish because of Eq. \eqref{denom-regular}, one then picks up a zero
of $V_J(x)$ at $x=\rho_\mathrm{g} +\lambda$ from the factor $[x_{j+1}-x_{j}+\mathrm{g} ]$.
\end{proof}

Specifically, by means of Lemma \ref{discretization:lem} we cast the corresponding action of $D_r$ \eqref{Dr} in terms of a discrete difference operator
in the space $\mathcal{C}(\Lambda^{(n)})$ of complex lattice functions 
$\lambda\stackrel{f}{\to} f_\lambda$:
\begin{subequations}
\begin{equation}\label{Dr:a}
(D_r  f)_\lambda =  \sum_{\substack{\lambda\subset\nu\subset\lambda+1^{n} \\ |\nu|=|\lambda |+r}}   B_{\nu/\lambda}   (\alpha,\mathrm{g};p)  f_{\nu}  
\qquad (f\in\mathcal{C}(\Lambda^{(n)}),\  \lambda\in\Lambda^{(n)} ),
\end{equation}
with
\begin{equation}\label{Dr:b}
B_{\nu/\lambda}  (\alpha,\mathrm{g};p) = \prod_{1\leq j<k\leq n}  \frac{[\lambda_j-\lambda_k+\mathrm{g}(k-j+\theta_j-\theta_k)]}{[\lambda_j-\lambda_k+\mathrm{g}(k-j)]} 
\quad\text{and}\ \theta=\nu-\lambda ,
\end{equation}
\end{subequations}
through the dictionary $f(\rho_\mathrm{g}+\lambda)=f_\lambda$, $\lambda+\varepsilon_J=\nu$ (which implies that $V_J(\rho_\mathrm{g}+\lambda)=B_{\nu/\lambda}  (\alpha,\mathrm{g};p) $).
Here and below we have employed the following standard notational conventions regarding partitions: 
\begin{equation*}
|\lambda |=\lambda_1+\cdots+\lambda_n,\qquad m^r=(\underbrace{m,\ldots,m}_r,\underbrace{0,\ldots ,0}_{n-r}),
\end{equation*}
and $\forall\lambda,\mu\in\Lambda^{(n)}$:  $\lambda\subset\mu$ iff $\lambda_j\leq\mu_j$ for $j=1,\ldots ,n$.  Since $D_r$ \eqref{Dr:a}, \eqref{Dr:b} amounts to a discretization
of the elliptic Ruijsenaars operator $D_r$ \eqref{Dr}, the commutativity is inherited automatically.

\begin{corollary}[Commutativity]\label{commutativity:cor}
The discrete Ruijsenaars operators $D_1,\ldots,D_n$ \eqref{Dr:a}, \eqref{Dr:b}  commute in $\mathcal{C}(\Lambda^{(n)})$.
\end{corollary}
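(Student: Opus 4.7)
The plan is to lift the commutativity of the discrete operators to that of the continuous elliptic Ruijsenaars operators $D_1,\ldots,D_n$ on $\mathbb{C}^n$, proved by Ruijsenaars in \cite{rui:complete,rui:systems}, and then push the result back down to $\mathcal{C}(\Lambda^{(n)})$ by invoking Lemma \ref{discretization:lem}. The bridge is the observation that the discrete action defined in \eqref{Dr:a}--\eqref{Dr:b} is just the pointwise restriction of the continuous action \eqref{Dr} to an arbitrary extension of $f$ to $\mathbb{C}^n$, because the Boundary Condition removes any dependence on the values of that extension outside the shifted lattice $\rho_\mathrm{g}+\Lambda^{(n)}$.

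Concretely, given $f\in\mathcal{C}(\Lambda^{(n)})$, I would choose any function $\tilde{f}:\mathbb{C}^n\to\mathbb{C}$ with $\tilde{f}(\rho_\mathrm{g}+\lambda)=f_\lambda$ for all $\lambda\in\Lambda^{(n)}$; such an extension exists since the points $\rho_\mathrm{g}+\lambda$ are pairwise distinct as $\lambda$ varies. The first step is to verify the identity
\begin{equation*}
(D_r\tilde{f})(\rho_\mathrm{g}+\lambda)=(D_rf)_\lambda \qquad (\lambda\in\Lambda^{(n)}),
\end{equation*}
where the left-hand side uses the continuous operator \eqref{Dr}. By Lemma \ref{discretization:lem}, only subsets $J$ with $\nu:=\lambda+\varepsilon_J\in\Lambda^{(n)}$ contribute to the defining sum, and for these $\tilde{f}(\rho_\mathrm{g}+\nu)=f_\nu$. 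Identifying $V_J(\rho_\mathrm{g}+\lambda)=B_{\nu/\lambda}(\alpha,\mathrm{g};p)$ as explained after \eqref{Dr:b} recovers the discrete formula.

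The second step is to iterate this bridge. Since $D_s\tilde{f}$ is, by the first step, itself an extension of $D_sf\in\mathcal{C}(\Lambda^{(n)})$, applying the identity to $(D_s\tilde{f},r)$ in place of $(\tilde{f},r)$ gives
\begin{equation*}
(D_rD_s\tilde{f})(\rho_\mathrm{g}+\lambda)=(D_rD_sf)_\lambda,
\end{equation*}
and symmetrically with $r$ and $s$ interchanged. The continuous commutativity $D_rD_s\tilde{f}=D_sD_r\tilde{f}$ (valid pointwise, since it reduces to an algebraic identity in the coefficients $V_J$) then yields $(D_rD_sf)_\lambda=(D_sD_rf)_\lambda$ upon evaluation at $\rho_\mathrm{g}+\lambda$.

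The only (mild) subtlety lies in making sure the Boundary Condition is triggered at both nested levels: at the outer step it kills subsets $J$ with $\lambda+\varepsilon_J\notin\Lambda^{(n)}$, and at the inner step, evaluated at the already-shifted argument $\lambda+\varepsilon_J\in\Lambda^{(n)}$, it kills further subsets $K$ with $\lambda+\varepsilon_J+\varepsilon_K\notin\Lambda^{(n)}$. No other obstacle is anticipated, since the commutativity of the continuous elliptic Ruijsenaars operators is imported as a black box and the remaining work is pure bookkeeping.
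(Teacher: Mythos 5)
Your proposal is correct and takes essentially the same route as the paper, which compresses the entire argument into the single remark that the discrete operators inherit commutativity automatically from Ruijsenaars' continuous operators via the discretization. You have merely made explicit the bookkeeping the paper leaves implicit: extending $f$ off the lattice, checking that Lemma \ref{discretization:lem} makes the restriction identity hold at both nested levels, and then evaluating the continuous identity $D_rD_s=D_sD_r$ at the shifted lattice points.
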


\section{Eigenpolynomials}\label{sec3}

\subsection{Joint eigenfunctions}
We now define polynomials $P_\mu (\mathbf{e})=P_\mu (\mathbf{e};\alpha ,\mathrm{g};p)$, $\mu\in\Lambda^{(n)}$  in the variables
$\mathbf{e}=(\mathrm{e}_1,\ldots,\mathrm{e}_n)$ by means of the recurrence relation
\begin{subequations}
\begin{equation}\label{rec:a}
P_\mu (\mathbf{e})= \mathrm{e}_r P_\lambda (\mathbf{e})-
\sum_{\substack{\lambda\subset\nu\subset\lambda+1^{n},\,  |\nu|=|\mu| \\ \text{s.t.}\, \nu\in\Lambda^{(n)} \setminus \{ \mu\} }}   \psi^\prime_{\nu/\lambda}  (\alpha,\mathrm{g};p)
P_{\nu} (\mathbf{e})\qquad \text{if}\ \mu\neq 0,
\end{equation}
and $P_\mu (\mathbf{e})=1$ if $\mu =0$. Here $\lambda=\mu-1^r$, where
\begin{equation}\label{rec:b}
r=r_\mu=\min\{ 1\leq j\leq n \mid \mu_j-\mu_{j+1} >0\}  
\end{equation}
(with the convention $\mu_{n+1}\equiv 0$) and
\begin{equation}\label{psi}
 \psi^\prime_{\nu/\lambda} (\alpha,\mathrm{g};p) = \prod_{\substack{1\leq j<k\leq n\\ \theta_j-\theta_k=-1}} 
 {\textstyle 
  \frac{[\nu_j-\nu_k+\mathrm{g}(k-j+1)]}{[\nu_j-\nu_k+\mathrm{g}(k-j)]}
 \frac{[\lambda_j-\lambda_k+\mathrm{g}(k-j-1)]}{[\lambda_j-\lambda_k+\mathrm{g}(k-j)]}   }
\quad\text{with}\ \theta=\nu-\lambda .
\end{equation}
\end{subequations}

Let
\begin{equation}\label{degree}
d_\mu =\mu_1-\mu_n
\end{equation} 
and let $\preceq$ denote the dominance partial order on $\Lambda^{(n)}$, i.e.
\begin{equation*}
\forall \lambda,\mu\in\Lambda^{(n)}:\ \lambda\preceq\mu \Leftrightarrow  
|\lambda |=|\mu |\ \text{and}\ 
\sum_{1\leq j\leq r}\lambda_j\leq \sum_{1\leq j\leq r}\mu_j \ \text{for}\ r=1,\ldots,n-1
\end{equation*}
(and $\lambda \prec \mu$ if $\lambda\preceq\mu$ and $\lambda\neq\mu$).

\begin{proposition}[Triangularity]\label{triangularity:prp}
The polynomials $P_\mu(\mathbf{e})$, $\mu\in\Lambda^{(n)}$ are uniquely determined by the recurrence \eqref{rec:a}--\eqref{psi} (from the initial condition $P_0(\mathbf{e})=1$),
and their expansion in the monomial basis is unitriangular with respect to the
dominance order:
\begin{subequations}
\begin{equation}\label{triangular}
P_\mu (\mathbf{e}) = \mathrm{e}_\mu+  \sum_{\nu\in\Lambda^{(n)},\, \nu \prec \mu}  u_{\mu ,\nu}\,   \mathrm{e}_\nu \quad\text{with}\
u_{\mu ,\nu}= u_{\mu ,\nu}(\alpha,\mathrm{g};p)\in\mathbb{R}
\end{equation}
and 
\begin{equation}
\mathrm{e}_\mu=\prod_{1\leq j\leq n} \mathrm{e}_j^{\mu_j-\mu_{j+1}} .
\end{equation}
\end{subequations}
\end{proposition}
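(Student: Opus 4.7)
The plan is to induct on partitions ordered lexicographically by the pair $(|\mu|,\prec)$, simultaneously establishing existence/uniqueness and the triangular expansion. The base case $\mu=0$ is trivial since $P_0(\mathbf{e})=1=\mathrm{e}_0$.

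For well-posedness of the recurrence, I would first check that when $\mu\neq 0$ and $r=r_\mu$, the minimality of $r$ forces $\mu_1=\cdots=\mu_r>\mu_{r+1}$, so $\lambda=\mu-1^r\in\Lambda^{(n)}$ with $|\lambda|<|\mu|$. Writing $\mu=\lambda+\varepsilon_{\{1,\ldots,r\}}$, we see $\mu$ is among the partitions on the right-hand side of \eqref{rec:a}. The crucial combinatorial observation is that $\mu$ is the dominance-maximum of the index set $\{\nu\in\Lambda^{(n)}:\lambda\subset\nu\subset\lambda+1^n,\ |\nu|=|\lambda|+r\}$: for any $\nu=\lambda+\varepsilon_J$ with $|J|=r$ one has $\sum_{j\leq i}\nu_j=\sum_{j\leq i}\lambda_j+|J\cap\{1,\ldots,i\}|\leq\sum_{j\leq i}\lambda_j+\min(i,r)$, and equality at $i=r$ forces $J=\{1,\ldots,r\}$. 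Hence every $\nu\neq\mu$ in the sum satisfies $\nu\prec\mu$, so the recurrence lies within the range of the inductive hypothesis. Since the denominators in $\psi'_{\nu/\lambda}$ are nonzero by \eqref{denom-regular}, the formula uniquely determines $P_\mu$.

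For the triangular expansion, the key identity is $\mathrm{e}_r\mathrm{e}_\sigma=\mathrm{e}_{\sigma+1^r}$ for any partition $\sigma$, immediate from $\mathrm{e}_\sigma=\prod_j\mathrm{e}_j^{\sigma_j-\sigma_{j+1}}$ since adding $1^r$ only bumps the exponent at $j=r$ by one (and $\sigma+1^r$ is still a partition). In particular $\mathrm{e}_r\mathrm{e}_\lambda=\mathrm{e}_\mu$. Applying the inductive expansion of $P_\lambda$,
\begin{equation*}
\mathrm{e}_r P_\lambda=\mathrm{e}_\mu+\sum_{\sigma\prec\lambda}u_{\lambda,\sigma}\,\mathrm{e}_{\sigma+1^r},
\end{equation*}
and the monotonicity property $\sigma\prec\lambda\Rightarrow\sigma+1^r\prec\lambda+1^r=\mu$ (which follows by adding $\min(i,r)$ to both sides of the dominance inequalities) confines the tail to monomials $\mathrm{e}_\tau$ with $\tau\prec\mu$. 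Each subtracted $P_\nu$ with $\nu\prec\mu$ likewise contributes only monomials $\mathrm{e}_\tau$ with $\tau\preceq\nu\prec\mu$ by induction. Since $\psi'_{\nu/\lambda}(\alpha,\mathrm{g};p)$ is manifestly real (the function $[\,\cdot\,;p]$ takes real values on real arguments, and all shifts $\mathrm{g}(k-j+\cdots)$ are real), we collect $P_\mu=\mathrm{e}_\mu+\sum_{\nu\prec\mu}u_{\mu,\nu}\mathrm{e}_\nu$ with $u_{\mu,\nu}\in\mathbb{R}$.

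The main obstacle is really just the dominance-maximality of $\mu=\lambda+1^r$ within the index set of the sum: this is what makes the recurrence strictly decreasing in the well-founded order $(|\cdot|,\prec)$, and without it the induction would fail to close. The remaining ingredients---preservation of strict dominance under a $1^r$ shift, the identity $\mathrm{e}_r\mathrm{e}_\sigma=\mathrm{e}_{\sigma+1^r}$, and reality of the $\psi'$-coefficients---are straightforward.
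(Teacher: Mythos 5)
Your proof is correct. It follows the same skeleton as the paper's --- unfold the recurrence, extract the leading monomial $\mathrm{e}_\mu=\mathrm{e}_r\mathrm{e}_\lambda$ from the term $\mathrm{e}_rP_\lambda$, and check that every other contribution is strictly dominated --- but you organize the induction around a different well-founded order: $\nu$ precedes $\mu$ when $|\nu|<|\mu|$, or when $|\nu|=|\mu|$ and $\nu\prec\mu$, whereas the paper inducts lexicographically on the pair $(d_\mu,r_\mu)$ with $d_\mu=\mu_1-\mu_n$ and $r_\mu$ as in \eqref{rec:b}. Your choice is arguably the more natural one for a triangularity statement, and it lets you close the induction with a single clean combinatorial fact, namely that $\mu=\lambda+1^r$ is the unique dominance-maximal element of the index set $\{\nu\in\Lambda^{(n)}:\lambda\subset\nu\subset\lambda+1^n,\ |\nu|=|\lambda|+r\}$; the paper uses this implicitly, while you prove it explicitly via the partial-sum bound $\sum_{j\leq i}\nu_j=\sum_{j\leq i}\lambda_j+|J\cap\{1,\dots,i\}|\leq\sum_{j\leq i}\lambda_j+\min(i,r)$. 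The paper's ordering instead requires the slightly fussier check that each remaining $\nu$ has either $d_\nu<d_\mu$, or $d_\nu=d_\mu$ with $r_\nu<r_\mu$; its payoff is that the very same lexicographic order is reused in the induction proving Theorem \ref{e-polynomials:thm}, where one must also reach partitions $\nu$ with $|\nu|>|\mu|$ and your order would no longer decrease. All the supporting details you supply --- well-posedness of the coefficients via \eqref{denom-regular}, the identity $\mathrm{e}_r\mathrm{e}_\sigma=\mathrm{e}_{\sigma+1^r}$, preservation of strict dominance under adding $1^r$, and the reality of $\psi^\prime_{\nu/\lambda}$ --- are correct.
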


\begin{proof}
The proof is by lexicographical induction in $(d_\mu,r_\mu)$,  with  $r_\mu$ 
and $d_\mu$ as in  Eqs.  \eqref{rec:b} and \eqref{degree}, respectively.

If $d_\mu=0$, then either $\mu=0$ or $r=n$, i.e. $\mu = m^n$ with $m=\mu_1$. When $m=0$ we have that $\mu=0$, so $P_\mu (\mathbf{e})=\mathrm{e}_\mu=1$ by the initial condition, while for
$m>0$ the recurrence entails that
$P_\mu  (\mathbf{e})=P_{m^n} (\mathbf{e})=\mathrm{e}_nP_{(m-1)^n} (\mathbf{e})=\mathrm{e}_n^m=\mathrm{e}_\mu$.

If $d_\mu >0$,  then we have that  $\mu=\lambda +1^r $ with $r=r_\mu <n$ and  $\lambda\in\Lambda^{(n)}$.
Since $d_\lambda=d_\mu-1$, the induction hypothesis now ensures that
on the RHS of the recurrence \eqref{rec:a} the term
$\mathrm{e}_rP_\lambda  (\mathbf{e})$ expands as $\mathrm{e}_r\mathrm{e}_\lambda=\mathrm{e}_\mu$ plus a $\mathbb{R}$-linear combination of monomials of the form
$\mathrm{e}_r\mathrm{e}_\nu=\mathrm{e}_{\nu+1^r}$ with $\nu\prec\lambda$, i.e. $\nu+1^r\prec\mu$; the coefficients in this expansion stem from $P_\lambda  (\mathbf{e})$
and are thus uniquely determined from the recurrence relation (by the induction hypothesis). The  remaining terms on the RHS of the recurrence \eqref{rec:a}  consist in turn of a
$\mathbb{R}$-linear combination of $P_\nu  (\mathbf{e})$  with $d_{\nu }\leq d_\mu$ and $\nu \prec\mu$.
Moreover, one \emph{either} has that $d_{\nu}< d_\mu$ \emph{or} that $d_{\nu}= d_\mu$ with $r_\nu <r_\mu$.  Hence, in either case
the induction hypothesis guarantees that
$P_{\nu}  (\mathbf{e}) $  is uniquely determined by the recurrence relations through a
monomial expansion consisting of $\mathrm{e}_{\nu}$ perturbed by a linear combination
of $\mathrm{e}_{\tilde{\nu}}$ with $\tilde{\nu}\prec\nu$.

Upon combining all these terms appearing on the RHS of the recurrence \eqref{rec:a}, one confirms that monomial expansion of $P_\mu  (\mathbf{e})$ is of the form asserted in Eq. \eqref{triangular}
with expansion coefficients $u_{\mu,\nu}(\alpha, \mathrm{g};p)\in\mathbb{R}$ that are determined uniquely  by the recurrence relation.
\end{proof}

For $\mathbf{e}\in\mathbb{C}^n$, we  define $p(\mathbf{e})=p(\mathbf{e};\alpha,\mathrm{g};p) \in\mathcal{C}(\Lambda^{(n)})$ in terms of the
normalized polynomials
\begin{subequations}
\begin{equation}\label{p}
p_\mu(\mathbf{e})= c_\mu P_\mu (\mathbf{e}) \quad (\mu\in\Lambda^{(n)}),
\end{equation}
with
\begin{equation}\label{c}
c_\mu =c_\mu(\alpha,\mathrm{g};p)= \prod_{1\leq j<k\leq n} {\textstyle \frac{[ (k-j)\mathrm{g}]_{\mu_j-\mu_k}}{[ (k-j+1)\mathrm{g}]_{\mu_j-\mu_k}} } ,
\end{equation}
\end{subequations}
where $[z]_k$, $k=0,1,2,\ldots$ denotes the \emph{elliptic factorial}
\begin{equation*}
[z]_k=\prod_{0\leq l<k} [z+l]\quad\text{with}\  [z]_0=1.
\end{equation*}
Notice that the regularity assumption on the parameter $\mathrm{g}$ in Eq.  \eqref{g-regular} ensures that both the numerator and the denominator of $c_\mu$ do not vanish.

\begin{theorem}[Joint Eigenfunctions]\label{e-polynomials:thm}
\emph{(i)}
The functions $p(\mathbf{e})$, $\mathbf{e}\in\mathbb{C}^n$ constitute a family of joint eigenfunctions for the elliptic Ruijsenaars operators $D_1,\ldots, D_n$ \eqref{Dr:a}, \eqref{Dr:b}
in $\mathcal{C}(\Lambda^{(n)})$:
\begin{equation}\label{ev-eq:r}
D_r p(\mathbf{e})= \mathrm{e}_r p(\mathbf{e})\quad \text{for}\ r=1,\ldots ,n.
\end{equation}

\emph{(ii)} The vector of joint eigenvalues $\mathbf{e}=(\mathrm{e}_1,\ldots,\mathrm{e}_n)\in\mathbb{C}^n$ for the simultaneous eigenvalue problem in Eq. \eqref{ev-eq:r} is multiplicity free in $\mathcal{C}(\Lambda^{(n)})$.
\end{theorem}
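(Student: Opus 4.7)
For part (i), my plan is to recast the eigenvalue equation $(D_r p(\mathbf{e}))_\lambda = \mathrm{e}_r p_\lambda(\mathbf{e})$ as an ``elliptic Pieri rule'' for the $P_\mu$. Substituting $p_\nu = c_\nu P_\nu$, the eigenvalue equation rearranges to
\[
  \mathrm{e}_r P_\lambda(\mathbf{e}) = \sum_{\nu} \frac{B_{\nu/\lambda}\,c_\nu}{c_\lambda}\,P_\nu(\mathbf{e})
\]
summed over $\nu\in\Lambda^{(n)}$ with $\lambda\subset\nu\subset\lambda+1^n$ and $|\nu|=|\lambda|+r$. A direct factor-by-factor computation---case analysis on $\theta_j-\theta_k\in\{-1,0,1\}$ with $\theta=\nu-\lambda$, combined with telescoping of the elliptic factorials $[\cdot]_k$---yields the key algebraic identity $B_{\nu/\lambda}\,c_\nu/c_\lambda=\psi'_{\nu/\lambda}$, with the convention $\psi'_{\lambda+1^r/\lambda}=1$ matching the empty product in \eqref{psi}. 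With this identity in hand, the recurrence of Proposition \ref{triangularity:prp} is exactly the Pieri rule $\mathrm{e}_r P_\lambda=\sum_\nu\psi'_{\nu/\lambda}P_\nu$ for $r=r_\mu$ at $\lambda=\mu-1^{r_\mu}$, i.e.\ whenever $\lambda_1=\cdots=\lambda_r$. For $r=1$ this covers every $\lambda\in\Lambda^{(n)}$, so $D_1 p(\mathbf{e})=\mathrm{e}_1 p(\mathbf{e})$ is immediate.

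The main obstacle for part (i) is extending the Pieri rule to arbitrary $\lambda$ when $r\geq 2$. I plan to set $q:=D_r p(\mathbf{e})-\mathrm{e}_r p(\mathbf{e})$ and prove $q\equiv 0$ by combining three ingredients: commutativity (Corollary \ref{commutativity:cor}) forces $D_1 q=\mathrm{e}_1 q$; the recurrence already delivers $q_\lambda=0$ at every $\lambda$ on the stratum $\{\lambda_1=\cdots=\lambda_r\}$ (in particular at $\lambda=0$, and at every $\lambda=m^s$ with $s\geq r$); and the $D_1$-eigenvalue equation, viewed as the forward difference relation $\mathrm{e}_1 q_\lambda=\sum_j B_{\lambda+\varepsilon_j/\lambda}\,q_{\lambda+\varepsilon_j}$, allows vanishing to be propagated. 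A careful induction on the dominance order---at each step choosing a position $\lambda$ such that all but one of the $q_{\lambda+\varepsilon_j}$ are already known to vanish (either by the recurrence-stratum, by the inductive hypothesis, or by a previous propagation step)---isolates the remaining unknown and forces it to zero, so $q_\lambda=0$ throughout $\Lambda^{(n)}$.

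For part (ii), fix $\mathbf{e}\in\mathbb{C}^n$ and let $f\in\mathcal{C}(\Lambda^{(n)})$ be any joint eigenfunction with these eigenvalues. For each $\mu\neq 0$, taking $r=r_\mu$ and $\lambda=\mu-1^r$, the $D_r$-eigenvalue equation at $\lambda$ reads
\[
  \mathrm{e}_r f_\lambda=B_{\mu/\lambda}\,f_\mu+\sum_{\substack{\nu\neq\mu,\,|\nu|=|\mu| \\ \lambda\subset\nu\subset\lambda+1^n}} B_{\nu/\lambda}\,f_\nu.
\]
A translation-invariant dominance comparison---using that $\theta=1^r$ majorizes every admissible $\theta'\in\{0,1\}^n$ with $|\theta'|=r$, with equality only when $\theta'=\theta$---shows that every such $\nu\neq\mu$ satisfies $\nu\prec\mu$. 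Since $B_{\mu/\lambda}\neq 0$ by the regularity assumption \eqref{g-regular}, this equation solves for $f_\mu$ in terms of $f_\lambda$ (at smaller $|\cdot|$) and $\{f_\nu:\nu\prec\mu,\,|\nu|=|\mu|\}$. A two-tier induction on $(|\mu|,\mu)$---first on $|\mu|$, then on dominance at fixed $|\mu|$---determines $f$ uniquely from $f_0$, so each joint eigenspace has dimension at most one; combined with part (i) and the normalization $p_0(\mathbf{e})=c_0=1\neq 0$, this establishes multiplicity freeness of the joint spectrum.
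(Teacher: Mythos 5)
Your setup for part (i) --- the gauge identity $B_{\nu/\lambda}\,c_\nu/c_\lambda=\psi'_{\nu/\lambda}$, the observation that the recurrence \eqref{rec:a} is literally the Pieri identity $\mathrm{e}_r P_\lambda=\sum_\nu\psi'_{\nu/\lambda}P_\nu$ whenever $\lambda_1=\cdots=\lambda_r$, and the consequence that $D_1p(\mathbf{e})=\mathrm{e}_1p(\mathbf{e})$ holds outright --- is correct and coincides with the first half of the paper's proof. Part (ii) is also fine: your direct triangular solve (using that $B_{\mu/\lambda}\neq 0$ by \eqref{g-regular} and that every competing $\nu$ in the sum is strictly dominated by $\mu=\lambda+1^{r_\mu}$) is a legitimate, slightly more hands-on version of the paper's argument, which instead observes that a normalized joint eigenfunction must reproduce the recurrence and then invokes the uniqueness in Proposition \ref{triangularity:prp}.

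The gap is in the extension of part (i) to $r\geq 2$. Setting $q=D_rp(\mathbf{e})-\mathrm{e}_rp(\mathbf{e})$, the two facts you propose to combine --- $D_1q=\mathrm{e}_1q$ and $q\equiv 0$ on the stratum $\{\lambda_1=\cdots=\lambda_r\}$ --- do not determine $q$: the forward relation $\mathrm{e}_1q_\lambda=\sum_iB_{\lambda+\varepsilon_i/\lambda}\,q_{\lambda+\varepsilon_i}$ supplies one equation per partition of $|\lambda|$, while level $|\lambda|+1$ carries strictly more non-stratum unknowns once $r\geq 3$, so no ordering lets you isolate a single unknown neighbour. Concretely, for $n=r=3$ the stratum is the diagonal $\{(m,m,m)\}$; at $|\mu|=4$ there are four unknowns $q_{(4,0,0)},q_{(3,1,0)},q_{(2,2,0)},q_{(2,1,1)}$ and only three relations coming up from $(3,0,0)$, $(2,1,0)$, $(1,1,1)$, of which the last determines $q_{(2,1,1)}$ and the other two leave a one-parameter family; the deficit grows with the level, so even the full linear system over all levels remains underdetermined. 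You genuinely need commutativity of $D_r$ with the relevant $D_s$, not just with $D_1$. The paper's proof does exactly this: it runs a lexicographic induction on $(d_\mu,r_\mu)$, substitutes the recurrence for $P_\mu$ into $\mathrm{e}_sP_\mu$, and uses $(D_rD_sp(\mathbf{e}))_\lambda=(D_sD_rp(\mathbf{e}))_\lambda$ from Corollary \ref{commutativity:cor} to cancel the contribution of the leading term $\mathrm{e}_rP_\lambda$, the induction hypothesis handling the remaining terms. Some version of that finer induction is unavoidable here.
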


\begin{proof}
\begin{subequations}
When evaluating at $\mu\in\Lambda^{(n)}$, the $s$th eigenvalue equation in Eq. \eqref{ev-eq:r} reads
\begin{equation}\label{eveq}
\sum_{\substack{\mu\subset\nu\subset\mu+1^{n}\\ |\nu|=|\mu | +s}}  B_{\nu/\mu}  (\alpha,\mathrm{g};p)\,
p_{\nu} (\mathbf{e})=  \mathrm{e}_s p_\mu (\mathbf{e}) .
\end{equation}
The
explicit product formulas for $B_{\mu/\lambda} (\alpha,\mathrm{g};p)$ \eqref{Dr:b}, $\psi^\prime_{\mu/\lambda} (\alpha,\mathrm{g};p)$  \eqref{psi} and $c_\mu$ \eqref{c} reveal that
for all  $\nu\in\Lambda^{(n)}$ such that $\mu\subset \nu \subset \mu+1^{n+1}$:
\begin{equation}\label{gauge}
\psi^\prime_{\nu/\mu}  (\alpha,\mathrm{g};p)  c_\mu =B_{\nu/\mu} (\alpha,\mathrm{g};p)  c_{\nu}  .
\end{equation}
Hence, Eq. \eqref{eveq} can be rewritten in terms of $P_\mu (\mathbf{e})$ as follows:
\begin{equation}\label{pieri}
\sum_{\substack{\mu\subset\nu\subset\mu+1^{n}\\ |\nu|=|\mu | +s}}  \psi^\prime_{\nu/\mu}   (\alpha,\mathrm{g};p)
P_{\nu} (\mathbf{e}) = \mathrm{e}_s P_\mu (\mathbf{e}).
\end{equation}
Since  $\psi^\prime_{\mu+1^s /\mu}  (\alpha,\mathrm{g};p)=1$, the equality in Eq. \eqref{pieri} is immediate from the recurrence  for $P_{\mu+1^s}(\mathbf{e})$ if $s\leq r_\mu$ with the convention that
$r_0=n$
(cf. Eqs. \eqref{rec:a}--\eqref{psi}).
This settles the proof of  Eq. \eqref{eveq} for $s\leq r_\mu$, but  it remains to check that the identity in question also holds
if $r_\mu<s\leq n$. To this end we perform induction with respect to the lexicographical  order on $(d_\mu,r_\mu)$ as in the proof of Proposition \ref{triangularity:prp}:

\begin{align*}
\mathrm{e}_s p_\mu(\mathbf{e}) = &c_\mu \mathrm{e}_s  P_\mu(\mathbf{e}) \\
&\stackrel{\text{Eq.}~\eqref{rec:a}}{=} c_\mu \mathrm{e}_s   \Bigl(   \mathrm{e}_r P_\lambda (\mathbf{e})-
\sum_{\substack{\lambda\subset\nu\subset\lambda+1^{n},\,  |\nu|=|\mu| \\ \text{s.t.}\, \nu\in\Lambda^{(n)} \setminus \{ \mu\} }}   \psi^\prime_{\nu/\lambda}  (\alpha,\mathrm{g};p)
P_{\nu} (\mathbf{e})                    \Bigr) \quad \intertext{(where  $r=r_\mu$ and $\lambda=\mu-1^r$) }\\
\stackrel{\text{induction}}{=}& c_\mu   \Bigl(    \mathrm{e}_s \mathrm{e}_r  P_\lambda (\mathbf{e})-
\sum_{\substack{\lambda\subset\nu\subset\lambda+1^{n},\,  |\nu|=|\mu| \\ \text{s.t.}\, \nu\in\Lambda^{(n)} \setminus \{ \mu\} }}   \psi^\prime_{\nu/\lambda}  (\alpha,\mathrm{g};p)
 c_\nu^{-1} 
  \bigl( D_s p (\mathbf{e})  \bigr)_{\nu}            \Bigr) \\
  \stackrel{\text{Eq.}~\eqref{gauge}}{=} &  \bigl( D_s p (\mathbf{e})  \bigr)_{\mu} + \frac{c_\mu }{c_\lambda}  \Bigl(    \underbrace{\mathrm{e}_s \mathrm{e}_r  p_\lambda (\mathbf{e})-
  \bigl( D_rD_s p (\mathbf{e})  \bigr)_{\lambda}  }_{=0}          \Bigr) = \bigl( D_s p (\mathbf{e})  \bigr)_{\mu} 
   \end{align*}
   as desired.
To verify the
cancellation of the underbraced terms it is essential to exploit the commutativity  of the elliptic Ruijsenaars operators: 
\begin{align*}
&\bigl( D_rD_s p (\mathbf{e})  \bigr)_{\lambda}   \stackrel{\text{Cor.}~\ref{commutativity:cor}}{=}   \bigl( D_s D_r p (\mathbf{e})  \bigr)_{\lambda}  =
\sum_{\substack{\lambda\subset\nu\subset\lambda+1^{n}\\ |\nu|=|\lambda| +s}}  B_{\nu/\lambda}  (\alpha,\mathrm{g};p)\,
 \bigl( D_r p (\mathbf{e})  \bigr)_{\nu}  \\
& \stackrel{\ast}{=}\quad
\mathrm{e}_r \sum_{\substack{\lambda\subset\nu\subset\lambda+1^{n}\\ |\nu|=|\lambda| +s}}  B_{\nu/\lambda}  (\alpha,\mathrm{g};p)\,
 p (\mathbf{e})_{\nu}  = \mathrm{e}_r  \bigl( D_s p (\mathbf{e})  \bigr)_{\lambda} \stackrel{\text{induction}}{=} \mathrm{e}_r \mathrm{e}_s p_\lambda (\mathbf{e}) .
\end{align*}
In step $\ast$ we used that either  $(d_\nu,r_\nu) < (d_\mu,r_\mu)$ in the lexicographical order (in which case the equality
$ \bigl( D_r p (\mathbf{e})  \bigr)_{\nu}=\mathrm{e}_r  p (\mathbf{e})_{\nu} $
stems from the induction hypothesis), or else $r=r_\mu\leq r_\nu$ (in which case the equality in question is plain from the recurrence relation for $P_{\nu +1^r} (\mathbf{e})$ in combination with Eq. \eqref{gauge}).
\end{subequations}

This completes the proof of part \emph{(i)} of the Theorem. Part \emph{(ii)} follows in turn from the observation that,
upon normalizing such that $p_0(\mathbf{e})=1$,
any joint eigenfunction $p(\mathbf{e})\in\mathcal{C}(\Lambda^{(n)})$ solving the eigenvalue equations in Eq. \eqref{ev-eq:r}
automatically gives rise to polynomials $P_\mu (\mathbf{e})$ \eqref{p}, \eqref{c} obeying the recurrence relation \eqref{rec:a}--\eqref{psi}.  Proposition \ref{triangularity:prp} thus implies that the joint eigenfunction is unique, i.e. the vector of
joint eigenvalues $\mathbf{e}=(\mathrm{e}_1,\ldots,\mathrm{e}_n)$ is multiplicity free in $\mathcal{C}(\Lambda^{(n)})$.
\end{proof}

\subsection{Elliptic Littlewood-Richardson coefficients}

It is clear from Proposition \ref{triangularity:prp} that the polynomials $P_\mu(\mathbf{e})$, $\mu\in \Lambda^{(n)}$ form a basis for the polynomial ring
$\mathbb{R}[\mathrm{e}_1,\ldots,\mathrm{e}_n]$. The corresponding structure constants give rise to an elliptic generalization of the Littlewood-Richardson coefficients:
\begin{equation}\label{eLR}
P_\lambda P_\mu =  \sum_{\nu\in\Lambda^{(n)}}   c^\nu_{\lambda,\mu} (\alpha, \mathrm{g};p) P_\nu 
\end{equation}
(where the arguments $\mathbf{e}$ are suppressed). For $\mu=1^r$, an explicit product formula for these elliptic Littlewood-Richardson coefficients is immediate from
Theorem \ref{e-polynomials:thm}.

\begin{corollary}[Pieri Rule]\label{pieri:cor}
For $\mu\in\Lambda^{(n)}$ and $1\leq r\leq n$, one has that
\begin{equation}\label{pieri:r}
P_\lambda P_{1^r}  = 
\sum_{\substack{\lambda\subset\nu\subset\lambda+1^{n} \\ |\nu|=|\lambda |+r }}   \psi^\prime_{\nu/\lambda}  (\alpha,\mathrm{g};p)
P_{\nu} .
\end{equation}
\end{corollary}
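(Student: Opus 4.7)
The plan is to observe that the Pieri rule is just a restatement of the polynomial identity derived for $P_\mu$ inside the proof of Theorem~\ref{e-polynomials:thm}, once one identifies the eigenvalue $\mathrm{e}_r$ with the basis polynomial $P_{1^r}$. Concretely, Eq.~\eqref{pieri} (with the roles of $\mu$ and $\lambda$, and of $s$ and $r$, interchanged) asserts
\begin{equation*}
\sum_{\substack{\lambda\subset\nu\subset\lambda+1^{n}\\ |\nu|=|\lambda|+r}} \psi^\prime_{\nu/\lambda}(\alpha,\mathrm{g};p)\, P_\nu(\mathbf{e}) \;=\; \mathrm{e}_r\, P_\lambda(\mathbf{e}),
\end{equation*}
as an identity in $\mathbb{R}[\mathrm{e}_1,\ldots,\mathrm{e}_n]$. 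Multiplying $P_\lambda$ by $P_{1^r}$ would then deliver the stated Pieri rule, provided $P_{1^r}(\mathbf{e}) = \mathrm{e}_r$.

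The remaining step is therefore to verify $P_{1^r}(\mathbf{e}) = \mathrm{e}_r$ for each $r\in\{1,\ldots,n\}$ directly from the defining recurrence \eqref{rec:a}--\eqref{psi}. For $\mu = 1^r$ one has $r_\mu = r$ and $\lambda = \mu - 1^r = 0$. The correction sum on the right-hand side of \eqref{rec:a} ranges over $\nu\in\Lambda^{(n)}\setminus\{1^r\}$ with $0\subset\nu\subset 1^n$ and $|\nu| = r$; but any such $\nu$ is a partition of $r$ into $n$ parts all belonging to $\{0,1\}$, which forces $\nu = 1^r$. Hence the sum is empty, and the recurrence collapses to $P_{1^r}(\mathbf{e}) = \mathrm{e}_r\, P_0(\mathbf{e}) = \mathrm{e}_r$.

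Combining the two ingredients yields $P_\lambda\, P_{1^r} = \mathrm{e}_r\, P_\lambda = \sum_\nu \psi^\prime_{\nu/\lambda}(\alpha,\mathrm{g};p)\, P_\nu$, which is exactly \eqref{pieri:r}. There is no genuine obstacle here: all the analytic content is already packaged in Theorem~\ref{e-polynomials:thm}, which rests on the commutativity from Corollary~\ref{commutativity:cor} and on the gauge identity \eqref{gauge}. The corollary merely rereads the eigenvalue equation for $D_r$ as a multiplication rule in the $P$-basis, with the structure constants read off from the coefficients $\psi^\prime_{\nu/\lambda}$ already appearing in the recurrence.
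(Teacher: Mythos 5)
Your proposal is correct and follows essentially the same route as the paper: identify $P_{1^r}(\mathbf{e})=\mathrm{e}_r$ and then read the Pieri rule off the reformulated eigenvalue equation \eqref{pieri} established in the proof of Theorem~\ref{e-polynomials:thm}. The only cosmetic difference is that you verify $P_{1^r}(\mathbf{e})=\mathrm{e}_r$ directly from the recurrence, whereas the paper cites the unitriangular expansion $P_{1^r}(\mathbf{e})=\mathrm{e}_{1^r}=\mathrm{e}_r$; both are immediate.
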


\begin{proof}
Since $P_{1^r}(\mathbf{e})=\mathrm{e}_{1^r}=\mathrm{e}_r$, the asserted Pieri rules encode the eigenvalue equations of Theorem \ref{e-polynomials:thm}
in the reformulation of Eq. \eqref{pieri} (obtained via Eq. \eqref{gauge}).
\end{proof}

With the aid of the Pieri rule, it is readily seen that the classical Littlewood-Richardson coefficients $c_{\lambda ,\mu}^\nu$ for the Schur polynomials
 \cite[Chapter I.9]{mac:symmetric}  and their  two-parameter deformation $f_{\lambda ,\mu}^\nu(q,t)$
associated with the Macdonald polynomials  \cite[Chapter VI.7]{mac:symmetric} arise as suitable parameter specializations of the three-parameter elliptic Littlewood-Richardson coefficients
$ c^\nu_{\lambda,\mu}  (\alpha, \mathrm{g};p)  $ \eqref{eLR}.

\begin{proposition}[Degenerations]\label{LRdegenerations:prp}
The classical Littlewood-Richardson coefficients $c_{\lambda ,\mu}^\nu$ and Macdonald's $(q,t)$-Littlewood-Richardson coefficients  $f_{\lambda ,\mu}^\nu(q,t)$ are recovered
from $ c^\nu_{\lambda,\mu}  (\alpha, \mathrm{g};p)  $ \eqref{eLR}  in the following way:
\begin{subequations}
\begin{equation}\label{g=1:LR}
\lim_{\mathrm{g}\to 1}  c^\nu_{\lambda,\mu}  (\alpha, \mathrm{g};p)  = c_{\lambda ,\mu}^\nu
\end{equation}
(provided $\frac{2\pi}{\alpha}>0$ is irrational), and
\begin{equation}\label{p=0:LR}
\lim_{p\to 0}  c^\nu_{\lambda,\mu}  (\alpha, \mathrm{g};p)  = f_{\lambda ,\mu}^\nu(q,q^{\mathrm{g}})
 \quad\text{with}\quad  q=e^{\text{i}\alpha}  
\end{equation}
\end{subequations}
(provided $j\mathrm{g}\not\in\mathbb{Z}_{\leq 0}+ \frac{2\pi}{\alpha}\mathbb{Z}$ for $j=1,\ldots ,n$).

In particular, for $\lambda,\nu\in\Lambda^{(n)}$ such that $\lambda \subset \nu \subset \lambda +1^n$ one has with these genericity assumptions in place that
\begin{subequations}
\begin{equation}\label{g=1:pieri}
\lim_{\mathrm{g}\to 1} \psi^\prime_{\nu/\lambda}   (\alpha,\mathrm{g};p)= 1
\end{equation}
and
\begin{equation}\label{p=0:pieri}
\lim_{p\to 0} \psi^\prime_{\nu/\lambda}  (\alpha,\mathrm{g};p) =  \prod_{\substack{1\leq j<k\leq n\\ \theta_j-\theta_k=-1}} 
 {\textstyle 
  \frac{[\nu_j-\nu_k+\mathrm{g}(k-j+1)]_q}{[\nu_j-\nu_k+\mathrm{g}(k-j)]_q}
 \frac{[\lambda_j-\lambda_k+\mathrm{g}(k-j-1)]_q}{[\lambda_j-\lambda_k+\mathrm{g}(k-j)]_q}   }  ,
\end{equation}
where $\theta=\nu-\lambda$ and
\end{subequations}
\begin{equation*}
{\textstyle [z]_q=\frac{\sin( \frac{\alpha z}{2})}{\sin(\frac{\alpha}{2})}= \frac{q^{\frac{z}{2}}-q^{-\frac{z}{2}}}{q^{\frac{1}{2}}-q^{-\frac{1}{2}}}  }.
\end{equation*}
\end{proposition}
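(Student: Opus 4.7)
The plan is to first verify the two Pieri--coefficient degenerations \eqref{g=1:pieri} and \eqref{p=0:pieri} by direct inspection of the product formula \eqref{psi}, and then propagate them to the Littlewood--Richardson statements \eqref{g=1:LR} and \eqref{p=0:LR} via the uniqueness of the $P_\mu$ supplied by Proposition~\ref{triangularity:prp}.

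For \eqref{g=1:pieri}, observe that the condition $\theta_j-\theta_k=-1$ together with $\theta\in\{0,1\}^n$ forces $\nu_j=\lambda_j$ and $\nu_k=\lambda_k+1$, hence $\nu_j-\nu_k=\lambda_j-\lambda_k-1$. Setting $\mathrm{g}=1$ in the four theta-arguments of each factor of \eqref{psi} then gives the identities
\[
\nu_j-\nu_k+(k-j+1)=\lambda_j-\lambda_k+(k-j),\qquad
\lambda_j-\lambda_k+(k-j-1)=\nu_j-\nu_k+(k-j),
\]
making the two ratios in each factor reciprocals of one another, so each factor collapses to $1$. Irrationality of $\frac{2\pi}{\alpha}$ ensures that $\mathrm{g}=1$ obeys the genericity requirement \eqref{g-regular}, so the limit is attained term-wise. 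For \eqref{p=0:pieri}, the expansion $[z;p]\to\frac{2}{\alpha}\sin(\frac{\alpha z}{2})$ noted after \eqref{theta} converts each ratio $[z;p]/[z';p]$ into $\sin(\frac{\alpha z}{2})/\sin(\frac{\alpha z'}{2})=[z]_q/[z']_q$ with $q=e^{i\alpha}$, the standing hypothesis $j\mathrm{g}\notin\mathbb{Z}_{\leq 0}+\frac{2\pi}{\alpha}\mathbb{Z}$ keeping all denominators from vanishing in the limit.

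To deduce \eqref{g=1:LR}, I substitute \eqref{g=1:pieri} into the Pieri rule \eqref{pieri:r}: at $\mathrm{g}=1$ it reduces to the classical Schur Pieri rule
\[
P_\lambda\cdot P_{1^r}=\sum_{\substack{\lambda\subset\nu\subset\lambda+1^n\\ |\nu|=|\lambda|+r}} P_\nu .
\]
Interpreting $\mathrm{e}_r$ as the elementary symmetric polynomial $e_r(x_1,\ldots,x_n)$ (so that $\mathrm{e}_\mu=e_{\mu'}$ in standard notation), the Schur basis $\{s_\mu\}$ is well known to be unitriangular in $\{\mathrm{e}_\mu\}$ under the dominance order and to obey precisely this Pieri rule; the uniqueness provided by Proposition~\ref{triangularity:prp} therefore gives $P_\mu(\mathbf{e};\alpha,1;p)=s_\mu$ as elements of $\mathbb{R}[\mathrm{e}_1,\ldots,\mathrm{e}_n]$, and \eqref{g=1:LR} follows by matching structure constants. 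The argument for \eqref{p=0:LR} is entirely parallel: by \eqref{p=0:pieri} the Pieri rule at $p=0$ coincides with Macdonald's Pieri rule (cf.\ Macdonald~VI.6) at $(q,t)=(e^{i\alpha},q^{\mathrm{g}})$, and the same uniqueness principle identifies $\lim_{p\to 0}P_\mu$ with the Macdonald polynomial $P_\mu(\,\cdot\,;q,q^\mathrm{g})$, forcing the structure constants to agree with $f^\nu_{\lambda,\mu}(q,q^\mathrm{g})$.

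The one technical obstacle is to verify that the rational expressions in \eqref{rec:a}--\eqref{psi} and in $c_\mu$ \eqref{c}---through which the recurrence determines the $P_\mu$---have no poles at the relevant limit, so that the termwise passage is legitimate. The two genericity hypotheses are tailored precisely for this: irrationality of $\frac{2\pi}{\alpha}$ keeps the denominator factors $[\lambda_j-\lambda_k+(k-j)\mathrm{g}]$ nonvanishing at $\mathrm{g}=1$ via the zero-set description of $\vartheta_1$, while the assumption on $j\mathrm{g}$ guards the same denominators in the trigonometric limit. I expect this bookkeeping, rather than any genuinely substantive step, to be the only real source of care in a complete write-up.
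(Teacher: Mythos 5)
Your argument is correct and follows essentially the same route as the paper: verify the two coefficient limits \eqref{g=1:pieri} and \eqref{p=0:pieri} directly from the product formula \eqref{psi} (using $\lambda_j-\lambda_k=\nu_j-\nu_k+1$ when $\theta_j-\theta_k=-1$, and $[z;0]=\tfrac{2}{\alpha}\sin(\tfrac{\alpha z}{2})$), then propagate to \eqref{g=1:LR} and \eqref{p=0:LR} because the $\mathrm{e}_r=P_{1^r}$ generate the ring and the degenerate Pieri rules are exactly those of the Schur and Macdonald polynomials. Your extra appeal to the unitriangularity/uniqueness of Proposition~\ref{triangularity:prp} to identify the limiting polynomials is just a slightly more explicit version of the step the paper leaves implicit.
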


\begin{proof}
By definition, the polynomial ring $\mathbb{R}[\mathrm{e}_1,\ldots,\mathrm{e}_n]$ is generated by the monomials $\mathrm{e}_r=P_{1^r}(\mathbf{e})$, $r=1,\ldots ,n$. It is therefore
sufficient to verify the limits
in Eqs. \eqref{g=1:LR},  \eqref{p=0:LR} for $\lambda=1^r$ ($r=1,\ldots ,n$), which---by the Pieri rule of Corollary \ref{pieri:cor}---amounts to checking the limits in Eqs. \eqref{g=1:pieri}, \eqref{p=0:pieri}. The genericity assumptions on the parameters ensure that none of the denominators vanish.

Specifically, in the product formula for $ \psi^\prime_{\nu/\lambda}$ \eqref{psi} one has that $\lambda_j-\lambda_k=\nu_j-\nu_k+1$ if $\theta_j-\theta_k=-1$, so
the limit in Eq. \eqref{g=1:pieri} is evident.   We thus recover in this manner  the (dual) Pieri rules for the Schur polynomials $s_\mu (x)$ \cite[Ch. I, Eq. (5.17)]{mac:symmetric}
from Corollary \ref{pieri:cor}, which in turn implies the limit in Eq. \eqref{g=1:LR}.

Similarly, since $[z;p]$ \eqref{scaled-theta}, \eqref{theta} extends analytically to $-1<p<1$ with $[z;0]=\frac{\alpha}{2}\sin( \frac{\alpha z}{2})$, the limit in Eq. \eqref{p=0:pieri} is also manifest from Eq. \eqref{psi}. Upon comparing with \cite[Ch. VI, Eqs. (6.7${}^\prime$), (6.13)]{mac:symmetric}, we see that at $p=0$  the Pieri rules for the Macdonald polynomials
$P_\mu (x;q,q^{\mathrm{g}})$ are recovered from Corollary \ref{pieri:cor}, therewith settling the limit in Eq. \eqref{p=0:LR}.
\end{proof}

The Pieri rule also confirms that some well-known vanishing properties enjoyed by the Littlewood-Richardson coefficients persist at the elliptic level.

\begin{proposition}[Vanishing Terms]\label{vanishing:prp}
For $\lambda,\mu,\nu\in\Lambda^{(n)}$,
the  elliptic Littlewood-Richardson coefficient  $c^\nu_{\lambda,\mu} (\alpha, \mathrm{g};p)$ vanishes unless
 $\lambda\subset \nu$ and  $\mu\subset \nu$ with $|\lambda |+ |\mu |=|\nu|$.
\end{proposition}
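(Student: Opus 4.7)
The plan is to first establish the degree equality $|\nu|=|\lambda|+|\mu|$, then reduce the two containment conditions to a single one by symmetry, and finally prove that containment by a double induction on $\mu$ built from the recurrence and the Pieri rule. For the degree, assign $\mathrm{e}_j$ the weight $j$; then $\mathrm{e}_\mu=\prod_j \mathrm{e}_j^{\mu_j-\mu_{j+1}}$ is homogeneous of weight $\sum_j j(\mu_j-\mu_{j+1})=|\mu|$, and the triangularity in Proposition \ref{triangularity:prp} only couples $\mu$ to $\nu\prec\mu$ (which forces $|\nu|=|\mu|$), so $P_\mu$ itself is homogeneous of weight $|\mu|$. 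Hence $P_\lambda P_\mu$ is homogeneous of weight $|\lambda|+|\mu|$, and comparing weights in \eqref{eLR} forces $c^\nu_{\lambda,\mu}=0$ unless $|\nu|=|\lambda|+|\mu|$. Commutativity of the polynomial ring yields $c^\nu_{\lambda,\mu}=c^\nu_{\mu,\lambda}$, so it suffices to prove $\lambda\subset\nu$ whenever $c^\nu_{\lambda,\mu}\neq 0$; the inclusion $\mu\subset\nu$ then follows by swapping the roles of $\lambda$ and $\mu$.

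For $\lambda\subset\nu$, I would proceed by induction on $\mu$ ordered by the pair $(|\mu|,\text{rank of }\mu\text{ in any total refinement of the dominance order})$. The base case $\mu=0$ is trivial. For the inductive step, set $r=r_\mu$ and $\mu'=\mu-1^r$; multiplying the recurrence \eqref{rec:a}--\eqref{psi} by $P_\lambda$ gives
\begin{equation*}
P_\lambda P_\mu \;=\; P_\lambda\,\mathrm{e}_r\,P_{\mu'} \;-\; \sum_{\tilde{\mu}\in S} \psi^\prime_{\tilde{\mu}/\mu'}(\alpha,\mathrm{g};p)\,P_\lambda P_{\tilde{\mu}},
\end{equation*}
where $S=\{\tilde{\mu}\in\Lambda^{(n)} : \mu'\subset\tilde{\mu}\subset\mu'+1^n,\ |\tilde{\mu}|=|\mu|,\ \tilde{\mu}\neq\mu\}$. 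The Pieri rule (Corollary \ref{pieri:cor}) expands $P_\lambda\,\mathrm{e}_r=P_\lambda P_{1^r}$ as a linear combination of $P_\nu$ with $\lambda\subset\nu$; since $|\mu'|<|\mu|$, the induction hypothesis applies to each $P_\nu P_{\mu'}$ and places the first term on the right inside the $P_\xi$-span with $\xi\supset\nu\supset\lambda$.

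It remains to handle the $S$-sum, and the key combinatorial point is that $\tilde{\mu}\prec\mu$ for every $\tilde{\mu}\in S$. Writing $\tilde{\mu}=\mu'+\varepsilon_J$ with $|J|=r$ and $J\neq\{1,\dots,r\}$, the partial-sum difference
\begin{equation*}
\sum_{j\leq s}\tilde{\mu}_j-\sum_{j\leq s}\mu_j \;=\; |J\cap\{1,\dots,s\}|-\min(r,s)
\end{equation*}
is nonpositive for every $s$, and strict at $s=r$ (since $J\neq\{1,\dots,r\}$ and $|J|=r$); together with $|\tilde{\mu}|=|\mu|$ this yields $\tilde{\mu}\prec\mu$. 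The inner induction (same level $|\mu|$, strictly lower in dominance) therefore applies to $P_\lambda P_{\tilde{\mu}}$ and puts it in the $P_\xi$-span with $\xi\supset\lambda$, closing the induction. I expect the main obstacle to be precisely this $S$-sum: because $|\tilde{\mu}|=|\mu|$, a naive single induction on $|\mu|$ fails, and it is the dominance comparison above that justifies refining to the double induction $(|\mu|,\text{dominance})$.
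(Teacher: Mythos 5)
Your argument is correct, and the first half (weighting $\mathrm{e}_j$ by $j$ so that $P_\mu$ is homogeneous of weight $|\mu|$, forcing $|\nu|=|\lambda|+|\mu|$) is exactly what the paper does. For the containment, both you and the paper rest on the same key input --- the Pieri rule of Corollary \ref{pieri:cor} shows that multiplication by $\mathrm{e}_r$ preserves the span $\mathcal{I}_\lambda$ of the $P_\kappa$ with $\lambda\subset\kappa$ --- but the packaging differs. The paper simply observes that this makes $\mathcal{I}_\lambda$ an \emph{ideal} of $\mathbb{R}[\mathrm{e}_1,\ldots,\mathrm{e}_n]$; since $P_\mu$ is a polynomial in the generators $\mathrm{e}_1,\ldots,\mathrm{e}_n$ and $P_\lambda\in\mathcal{I}_\lambda$, one gets $P_\lambda P_\mu\in\mathcal{I}_\lambda\cap\mathcal{I}_\mu$ in one line, with no induction and no need to touch the recurrence for $P_\mu$ at all. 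You instead unwind the recurrence \eqref{rec:a}--\eqref{psi} and run a double induction on $(|\mu|,\text{dominance})$, which obliges you to verify the (correct, and correctly proved) combinatorial fact that the correction terms $\tilde{\mu}=\mu'+\varepsilon_J$ with $J\neq\{1,\ldots,r\}$ satisfy $\tilde{\mu}\prec\mu$. What your route buys is an explicit, self-contained verification that traces exactly how the coefficients propagate through the recurrence; what the paper's route buys is brevity --- the ideal-theoretic formulation absorbs your entire inductive step (including the dominance comparison) into the single remark that $\mathrm{e}_r\mathcal{I}_\lambda\subset\mathcal{I}_\lambda$, and it also yields both inclusions $\lambda\subset\nu$ and $\mu\subset\nu$ simultaneously without invoking the symmetry $c^\nu_{\lambda,\mu}=c^\nu_{\mu,\lambda}$. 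You might note that your observation that the obstacle lies in the terms with $|\tilde{\mu}|=|\mu|$ is precisely the difficulty the ideal formulation is designed to sidestep.
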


\begin{proof}
It suffices to mimic the proof for the corresponding statement at $p=0$ from \cite[Ch. VI, Eq. (7.4)]{mac:symmetric}.
Since the monomial expansion of $P_\mu (\mathbf{e})$ in Proposition \ref{triangularity:prp} involves only monomials $\mathrm{e}_\kappa$ with $|\kappa |=|\mu|$ and
$\mathrm{e}_\kappa \mathrm{e}_{\tilde{\kappa}}=\mathrm{e}_{\kappa+\tilde{\kappa}}$, it is clear that
$c^\nu_{\lambda,\mu} (\alpha, \mathrm{g};p)$ \eqref{eLR} can only be nonzero provided $|\nu |=|\lambda|+|\mu|$. Moreover, let $\mathcal{I}_\lambda$ denote the subspace of $\mathbb{R}[\mathrm{e}_1,\ldots ,\mathrm{e}_n]$ spanned by the $P_\kappa (\mathbf{e})$ with $\lambda\subset\kappa$. It is manifest from the Pieri rule that
$\mathrm{e}_r \mathcal{I}_\lambda\subset \mathcal{I}_\lambda$ for $r=1,\ldots,n$, so $\mathcal{I}_\lambda$ is an ideal in $\mathbb{R}[\mathrm{e}_1,\ldots ,\mathrm{e}_n]$. It thus follows that $P_\lambda(\mathbf{e})P_\mu(\mathbf{e})\in \mathcal{I}_\lambda\cap \mathcal{I}_\mu$.
\end{proof}

\subsection{Symmetric polynomials}
Let us recall (cf. the proof of Proposition \ref{LRdegenerations:prp}) that $s_\mu (x)=s_\mu (x_1,\ldots ,x_n)$ and $P_\mu(x;q,t)=P_\mu(x_1,\ldots ,x_n;q,t)$ refer  to the  Schur polynomials \cite[Chapter I]{mac:symmetric} and the
Macdonald polynomials \cite[Chapter VI]{mac:symmetric}, respectively. The symmetric  polynomials in question are monic in the sense that their leading monomial  is given by
\begin{equation}\label{msf}
m_\mu (x)  =m_\mu (x_1,\ldots ,x_n)  = \sum_{\nu\in S_{n}(\mu)}  x_1^{\nu_1}\cdots x_{n}^{\nu_{n}}\qquad (\mu\in\Lambda^{(n)}),
\end{equation}
where the sum is over all compositions reordering the parts of $\mu$ (i.e. over the orbit of $\mu$ with respect to the action of the 
permutation-group $S_{n}$ of permutations
$\sigma= { \bigl( \begin{smallmatrix}1& 2& \cdots & n \\
 \sigma_1&\sigma_2&\cdots & \sigma_{n}
 \end{smallmatrix}\bigr)}$ on $\mu_1,\mu_2,\ldots,\mu_{n}$).  It is instructive to describe the precise relation between our elliptic eigenpolynomials and these two standard bases for the
ring $\mathcal{A}^{(n)}=\mathbb{R}[x_1,\ldots ,x_n]^{S_n}$ of symmetric polynomials  in the variables $x_1,\ldots, x_n$.
 
To this end, let us observe that the polynomials $P_\mu (\mathbf{e})$, $\mu\in\Lambda^{(n)}$ give rise
 to a (monic) basis for $\mathcal{A}^{(n)}$ through the ring isomorphism $\mathbb{R}[\mathrm{e}_1,\ldots,\mathrm{e}_n]\cong \mathcal{A}^{(n)}$
determined by the injection
$\mathrm{e}_r\to m_{1^r}(x)$,  $r=1,\ldots,n$:
\begin{equation}\label{eMP}
R_\mu (x;\alpha,\mathrm{g}; p) = P_\mu ( \mathbf{e})\quad \text{with}\ \mathbf{e}=\bigl( m_{1^1}(x), m_{1^2}(x),\ldots,m_{1^n}(x)              \bigr) .
\end{equation}
From the limits in Proposition \ref{LRdegenerations:prp}, it is then clear that the corresponding degenerations of the recurrence relations \eqref{rec:a}--\eqref{psi} reproduce the
Schur polynomials and the Macdonald polynomials respectively (cf.  \cite[Ch. I, Eq. (5.17)]{mac:symmetric} and
\cite[Ch. VI, Eqs. (6.7${}^\prime$), (6.13)]{mac:symmetric}).

\begin{corollary}[The Schur and Macdonald Limits]\label{Pdegenerations:cor}
For any $\mu\in\Lambda^{(n)}$, 
the Schur polynomial $s_\mu (x)$ and the Macdonald polynomial $P_\mu (x;q,q^\mathrm{g})$ are recovered
from $R_\mu  (x;\alpha, \mathrm{g};p)  $ \eqref{eMP}  in the following way:
\begin{subequations}
\begin{equation}\label{g=1:P}
\lim_{\mathrm{g}\to 1} R_\mu (x;\alpha, \mathrm{g};p)  = s_\mu (x)
\end{equation}
(provided $\frac{2\pi}{\alpha}>0$ is irrational), and
\begin{equation}\label{p=0:P}
\lim_{p\to 0} R_\mu  (x;\alpha, \mathrm{g};p)  =P_\mu (x; q,q^{\mathrm{g}})
 \quad\text{with}\quad  q=e^{\text{i}\alpha}  
\end{equation}
(provided $j\mathrm{g}\not\in\mathbb{Z}_{\leq 0}+ \frac{2\pi}{\alpha}\mathbb{Z}$ for $j=1,\ldots ,n$).
\end{subequations}
\end{corollary}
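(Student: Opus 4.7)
My plan is to exploit the uniqueness of the recurrence \eqref{rec:a}--\eqref{psi} defining $\{P_\mu(\mathbf{e})\}_{\mu\in\Lambda^{(n)}}$ from $P_0=1$ (Proposition \ref{triangularity:prp}), transfer it via the ring isomorphism $\mathrm{e}_r\mapsto m_{1^r}(x)=e_r(x)$ to a parallel recurrence for $R_\mu(x;\alpha,\mathrm{g};p)$ in $\mathcal{A}^{(n)}$, and pass to each limit by invoking the explicit pointwise limits of the Pieri coefficients $\psi'_{\nu/\lambda}$ already established in Proposition \ref{LRdegenerations:prp}.

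First I would reformulate the recurrence as the Pieri identity from Corollary \ref{pieri:cor}, transported to $\mathcal{A}^{(n)}$:
\begin{equation*}
e_r(x)\, R_\lambda(x;\alpha,\mathrm{g};p) = \sum_{\substack{\lambda\subset\nu\subset\lambda+1^n\\ |\nu|=|\lambda|+r}} \psi'_{\nu/\lambda}(\alpha,\mathrm{g};p)\, R_\nu(x;\alpha,\mathrm{g};p).
\end{equation*}
Specializing to $r=r_\mu$ and $\lambda=\mu-1^{r_\mu}$ isolates $R_\mu$ on the right with coefficient $\psi'_{\mu/\lambda}=1$, so $R_\mu$ is determined from $R_\lambda$ and the remaining $R_\nu$ via the same lexicographic induction on $(d_\mu,r_\mu)$ employed in the proof of Proposition \ref{triangularity:prp}. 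Crucially, the combinatorial summation set $\{\nu:\lambda\subset\nu\subset\lambda+1^n,\,|\nu|=|\lambda|+r\}$ is independent of $(\alpha,\mathrm{g},p)$, so the very same induction closes for each limiting recurrence.

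Second, for the $\mathrm{g}\to 1$ limit I would invoke \eqref{g=1:pieri}, which reduces the transported Pieri identity to the classical dual Pieri rule for Schur polynomials \cite[Ch.~I, (5.17)]{mac:symmetric}. Since that rule together with $s_0=1$ uniquely characterizes the Schur basis (by the identical lex induction), the limit $\lim_{\mathrm{g}\to 1}R_\mu$ must coincide with $s_\mu$, giving \eqref{g=1:P}. For the $p\to 0$ limit I would instead invoke \eqref{p=0:pieri}, which identifies $\lim_{p\to 0}\psi'_{\nu/\lambda}$ with the Macdonald dual Pieri coefficient at $(q,t)=(e^{\mathrm{i}\alpha},q^\mathrm{g})$; see \cite[Ch.~VI, (6.7$'$), (6.13)]{mac:symmetric}. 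The same uniqueness argument then yields \eqref{p=0:P}.

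The main (and only modest) obstacle I anticipate is the book-keeping of genericity. I would verify that under the stated hypotheses---$\frac{2\pi}{\alpha}$ irrational for part (i), and $j\mathrm{g}\not\in\mathbb{Z}_{\leq 0}+\frac{2\pi}{\alpha}\mathbb{Z}$ for part (ii)---none of the denominators in $\psi'_{\nu/\lambda}$ vanish throughout a neighborhood of the limit point, so that the limits commute with the finite-sum recurrences and the degree-raising iterations that build $R_\mu$ up from $R_0=1$. With this continuity in hand, both claims follow immediately from the uniqueness of the recurrence-defined family.
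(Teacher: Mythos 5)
Your proposal is correct and follows essentially the same route as the paper: the authors likewise observe that the recurrence \eqref{rec:a}--\eqref{psi} (equivalently, the Pieri identities) uniquely determines the $R_\mu$, and that by Proposition \ref{LRdegenerations:prp} its coefficients degenerate to the dual Pieri coefficients for the Schur and Macdonald polynomials, so uniqueness forces the limits \eqref{g=1:P} and \eqref{p=0:P}. Your added attention to the genericity hypotheses guaranteeing nonvanishing denominators near the limit point is exactly the point the paper delegates to the proof of Proposition \ref{LRdegenerations:prp}.
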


\section{Generic elliptic deformation of the fusion ring for $\widehat{\mathfrak{su}}(n)_m$}\label{sec4}

\subsection{Character ring for $\mathfrak{su}(n)$}
When $r=n$ the Pieri rule \eqref{pieri:r} simply states that
$P_{\mu +1^n}(\mathbf{e})=P_{\mu}(\mathbf{e}) P_{1^n}(\mathbf{e}) =P_{\mu}(\mathbf{e}) \mathrm{e}_n$. Hence, one has more generally that
\begin{equation}\label{t-symmetry}
P_{\mu}(\mathbf{e}) = P_{\underline{\mu}}(\mathbf{e}) \mathrm{e}_n^{\mu_n}\quad
\text{with}\ 
\underline{\mu}=(\mu_1-\mu_{n},\mu_2-\mu_{n},\ldots,\mu_{n-1}-\mu_{n} ,0). 
\end{equation}
To divide out this translational symmetry one substitutes $\mathrm{e}_n=1$, therewith reducing to  the  ring
\begin{subequations}
\begin{equation}\label{R0}
\mathcal{R}^{(n)}_0=\mathbb{R}[\mathrm{e}_1,\ldots ,\mathrm{e}_n]/\langle \mathrm{e}_n-1\rangle  
\end{equation}
for which (the corresponding specialization of) the polynomials
\begin{equation}
P_\mu(\mathbf{e})  \quad \text{with}\ \mu\in
\Lambda^{(n)}_0=\{ \lambda\in \Lambda^{(n)} \mid \lambda_n=0 \} 
\end{equation}
\end{subequations}
provide a basis. In view of Proposition \ref{vanishing:prp}, it is immediate from Eq. \eqref{t-symmetry} that
the corresponding structure constants for $\mathcal{R}^{(n)}_0$  can be expressed in terms of elliptic Littlewood-Richardson coefficients
$c^\nu_{\lambda,\mu} (\alpha, \mathrm{g};p) $ \eqref{eLR} as follows:
\begin{subequations}
\begin{equation}\label{esl-LR}
P_\lambda P_\mu =  \sum_{\substack{  \nu\supset \lambda,\ \nu\supset\mu \\ |\nu |= |\lambda | +|\mu |     }}  c^\nu_{\lambda,\mu} (\alpha, \mathrm{g};p) P_{\underline{\nu} }
\qquad (\lambda ,\mu\in \Lambda^{(n)}_0,\, \nu\in\Lambda^{(n)} ).
\end{equation}
In particular, for  $1\leq r< n$ and $\lambda\in\Lambda^{(n)}_0$ one retrieves from Corollary \ref{pieri:cor} that
\begin{equation}\label{esl-pieri:r}
 P_\lambda  P_{1^r} = 
\sum_{\substack{\lambda\subset\nu\subset\lambda+1^{n} \\ |\nu|=|\lambda |+r }}   \psi^\prime_{\nu/\lambda}  (\alpha,\mathrm{g};p)
P_{\underline{\nu}} .
\end{equation}
\end{subequations}

Proposition \ref{LRdegenerations:prp} and Corollary \ref{Pdegenerations:cor} entail that  at $p=0$
Eq. \eqref{esl-LR} recovers the structure constants for the multiplication in the basis of Macdonald polynomials associated with
(the root system of) the complex simple Lie algebra $\mathfrak{su}(n)$ \cite{mac:orthogonal,mac:affine}. Indeed, Eq. \eqref{esl-pieri:r} degenerates to the Pieri rule
for the $\mathfrak{su}(n)$ Macdonald polynomials in this situation (which is obtained from the Pieri formula for  $P_\mu(x_1,\ldots,x_n;q,q^{\mathrm{g}})$ in
 \cite[Ch. VI, Eqs. (6.7${}^\prime$), (6.13)]{mac:symmetric} by dividing out the ideal generated by $m_{1^n}(x_1,\ldots ,x_n)-1$). Here 
partitions $\lambda\in\Lambda^{(n)}_0$ are identified with dominant weight vectors for $\mathfrak{su}(n)$ in the standard way:
\begin{equation}\label{pw-bijection}
\lambda\leftrightarrow \sum_{1\leq r <n} (\lambda_r-\lambda_{r+1})\varpi_r ,
\end{equation}
where
$\varpi_r\leftrightarrow 1^r$, $r=1,\ldots,n-1$ refers to the corresponding basis of fundamental weight vectors (labeled in accordance with the plates of \cite{bou:groupes}).

Similarly, for
$\mathrm{g}\to 1$  Eq. \eqref{esl-LR} encodes the structure constants for the character ring of $\mathfrak{su}(n)$  in the basis of the irreducible characters. For instance,
in this limit the Pieri rule \eqref{esl-pieri:r}  counts  the
tensor multiplicities for tensoring with a fundamental representation, cf. e.g.  \cite[Chapter 9.10]{pro:lie}.

\subsection{Fusion ideal}
We now scale $\alpha$ in terms of  $\mathrm{g}\in \mathbb{R}\setminus\mathbb{Q}$ as follows:
\begin{equation}\label{tc}
\boxed{\alpha=\frac{2\pi}{m+n\mathrm{g}} \quad \text{with}\ m\in\mathbb{N}.} 
\end{equation}
The irrationality of $\mathrm{g}$ then guarantees that the regularity requirement in Eq.
 \eqref{denom-regular} is satisfied, so the polynomials $P_\mu(\mathbf{e})$ \eqref{rec:a}--\eqref{psi} are well-defined for this parameter specialization.
Let us consider the following ideal in $\mathcal{R}^{(n)}_0$ \eqref{R0}:
\begin{equation}\label{Inm}
\mathcal{I}^{(n,m)}= \langle  P_\mu (\mathbf{e}) \mid \mu\in\Lambda^{(n)}_0\,\text{with}\, d_\mu = m+1   \rangle  .
\end{equation}
This ideal should be viewed as an elliptic $(\mathrm{g},p)$-deformation associated with the elliptic Ruijsenaars model of the fusion ideal for $\widehat{\mathfrak{su}}(n)_m$ Wess-Zumino-Witten
conformal field theories (cf. e.g. \cite{dif-mat-sen:conformal,fuc:affine,gep:fusion,goo-wen:littlewood,kor-str:slnk} and references therein).

With the aid of the  Pieri rule \eqref{esl-pieri:r} and following lemma, one arrives at
a convenient a basis for $\mathcal{I}^{(n,m)}$ in terms of the  eigenpolynomials associated with the elliptic Ruijsenaars lattice model.

\begin{lemma}[Level $m$ Boundary Condition]\label{vanishing:lem}
Let $\lambda\in\Lambda^{(n)}_0$ with $d_\lambda=m+1$ and let $\lambda\subset\nu\subset\lambda+1^n$.
Then one has that
\begin{equation}
 \psi^\prime_{\nu/\lambda}  ({\textstyle \frac{2\pi}{m+n\mathrm{g}}},\mathrm{g};p)=0\quad \text{if}\ d_\nu \leq m
\end{equation}
(assuming $\mathrm{g}\in \mathbb{R}\setminus\mathbb{Q}$).
\end{lemma}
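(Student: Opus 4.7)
The plan is to exhibit a single vanishing factor in the product \eqref{psi}, arising from the index pair $(j,k)=(1,n)$. Writing $\theta = \nu - \lambda \in \{0,1\}^n$ and using that $\lambda \in \Lambda^{(n)}_0$ with $d_\lambda = m+1$ forces $\lambda_1 = m+1$ and $\lambda_n = 0$, I would compute
\[
d_\nu \;=\; \nu_1 - \nu_n \;=\; (m+1) + \theta_1 - \theta_n.
\]
The hypothesis $d_\nu \leq m$ therefore pins down $\theta_1 = 0$ and $\theta_n = 1$, so the pair $(1,n)$ satisfies $\theta_1 - \theta_n = -1$ and genuinely contributes to the product. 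The corresponding numerator $[\nu_1 - \nu_n + n\mathrm{g}]$ simplifies to $[m+n\mathrm{g};p]$, which under the specialization $\alpha = \frac{2\pi}{m+n\mathrm{g}}$ becomes $[\tfrac{2\pi}{\alpha};p]$ and vanishes because $\vartheta_1(\pi;p)=0$ (the $\sin(\cdot)$ factor in \eqref{theta} annihilates the argument $\pi$).

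The remaining task is to ensure no denominator of \eqref{psi} can cancel this zero. For $-1<p<1$ the real zero set of $[\,\cdot\,;p]$ is $\tfrac{2\pi}{\alpha}\mathbb{Z}=(m+n\mathrm{g})\mathbb{Z}$, because the quadratic factors in the product expansion \eqref{theta} satisfy $1-2p^{2l}\cos(\cdot)+p^{4l}\geq (1-p^{2l})^2 >0$ and hence carry no real zeros. A denominator $[\mu_j - \mu_k + (k-j)\mathrm{g};p]$ (with $\mu\in\{\lambda,\nu\}$ and $1\leq j<k\leq n$) would therefore vanish only if $(k-j)\mathrm{g}$ plus an integer lies in $m\mathbb{Z}+n\mathrm{g}\mathbb{Z}$; separating the rational and $\mathrm{g}$-components via the irrationality of $\mathrm{g}$ would force $k-j\in n\mathbb{Z}$, contradicting $1\leq k-j\leq n-1$. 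Consequently the $(1,n)$ numerator zero is not cancelled, and $\psi'_{\nu/\lambda}=0$.

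No serious obstacle is anticipated: once the pair $(1,n)$ is spotted as the only way to reduce $d_\nu$ below $d_\lambda$, the vanishing of $[m+n\mathrm{g};p]$ is immediate from the level-$m$ choice of $\alpha$, and the non-vanishing of denominators is a routine $\mathbb{Z}$-versus-$\mathrm{g}\mathbb{Z}$ separation argument powered by $\mathrm{g}\in\mathbb{R}\setminus\mathbb{Q}$.
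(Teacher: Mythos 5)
Your proof is correct and follows essentially the same route as the paper's: pin down $\theta_1=0$, $\theta_n=1$ from $d_\nu\leq m$, and observe that the $(j,k)=(1,n)$ numerator factor equals $[m+n\mathrm{g}]=[\tfrac{2\pi}{\alpha}]=0$. Your explicit check that the denominators cannot cancel this zero is handled in the paper by the remark following Eq.~\eqref{tc} (irrationality of $\mathrm{g}$ guarantees Eq.~\eqref{denom-regular}), via the same $\mathbb{Z}$-versus-$\mathrm{g}\mathbb{Z}$ separation you use.
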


\begin{proof}
The conditions imply that $\nu=\lambda+\theta$ with $\theta$ a vertical $r$-strip ($1\leq r< n$), so 
$d_\nu=d_\lambda+\theta_1-\theta_n$ with $\theta_1,\theta_n\in \{ 0,1\}$. If $d_\nu\leq m$, we must in fact have that
$\theta_1=0$, $\theta_n=1$ and $d_\nu=m$ as $d_\lambda=m+1$.  In this situation,
$\psi^\prime_{\nu/\lambda}  ({\textstyle \frac{2\pi}{m+n\mathrm{g}}},\mathrm{g};p)$ \eqref{psi} picks up a zero
from the factor $[\nu_j-\nu_k+(k-j+1)\mathrm{g}]$ in the numerator for $j=1$ and $k=n$:
$[\nu_1-\nu_n+n\mathrm{g}]=[ m+n\mathrm{g}] =[{\textstyle \frac{2\pi}{\alpha}}]=0.$
\end{proof}

\begin{proposition}[Basis for $\mathcal{I}^{(n,m)}$]\label{I-basis:prp}
For $\alpha=\frac{2\pi}{m+n\mathrm{g}}$ and $\mathrm{g}\in \mathbb{R}\setminus\mathbb{Q}$,
the polynomials $P_\mu (\mathbf{e}) $ with $\mu\in\Lambda^{(n)}_0$ such that $d_\mu>m$ constitute a basis for $\mathcal{I}^{(n,m)}$.
\end{proposition}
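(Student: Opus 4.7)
The plan is to identify the linear span $V=\mathrm{span}_{\mathbb{R}}\{P_\mu(\mathbf{e})\mid \mu\in\Lambda^{(n)}_0,\, d_\mu>m\}$ with the fusion ideal $\mathcal{I}^{(n,m)}$. Linear independence of the proposed basis is free, since $\{P_\mu\}_{\mu\in\Lambda^{(n)}_0}$ is already a basis of $\mathcal{R}^{(n)}_0$ by Proposition~\ref{triangularity:prp}, so the work reduces to establishing the two inclusions $\mathcal{I}^{(n,m)}\subseteq V$ and $V\subseteq\mathcal{I}^{(n,m)}$.

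For $\mathcal{I}^{(n,m)}\subseteq V$, I would prove that $V$ is an ideal of $\mathcal{R}^{(n)}_0$. Since the ring is generated by $\mathrm{e}_1=P_{1^1},\ldots,\mathrm{e}_{n-1}=P_{1^{n-1}}$, it suffices to check that $V$ is closed under multiplication by each $\mathrm{e}_r$, which is read off from the Pieri rule \eqref{esl-pieri:r}: for $\mu$ with $d_\mu>m$, every $\nu=\mu+\theta$ that appears has $d_{\underline{\nu}}=d_\nu=d_\mu+\theta_1-\theta_n\geq d_\mu-1$. When $d_\mu\geq m+2$ this yields $d_{\underline{\nu}}\geq m+1$ automatically; in the borderline case $d_\mu=m+1$, the only escaping terms have $d_\nu=m$, and these are killed by Lemma~\ref{vanishing:lem}. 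Hence $V$ is an ideal containing every generator $P_\mu$ with $d_\mu=m+1$, yielding the inclusion.

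For the reverse inclusion $V\subseteq\mathcal{I}^{(n,m)}$, I would proceed by lexicographic induction on $(d_\mu,r_\mu)$, mirroring the scheme used in Proposition~\ref{triangularity:prp}. The base case $d_\mu=m+1$ is a tautology. For the inductive step with $d_\mu\geq m+2$, set $r=r_\mu$ and $\lambda=\mu-1^r$; observe that $r<n$ because $\mu_n=0$ and $\mu\neq 0$, so $\lambda\in\Lambda^{(n)}_0$ with $d_\lambda=d_\mu-1$. The recurrence \eqref{rec:a}, viewed in $\mathcal{R}^{(n)}_0$, expresses $P_\mu$ as $\mathrm{e}_r P_\lambda$ minus a linear combination of $P_{\underline{\nu}}$ with $\nu\neq\mu$, each satisfying $(d_\nu,r_\nu)<(d_\mu,r_\mu)$ and $d_\nu\in\{d_\mu-2,d_\mu-1,d_\mu\}$. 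The first term lies in $\mathcal{I}^{(n,m)}$ because $P_\lambda$ does, by induction (or by the base case when $d_\lambda=m+1$). For the sum: if $d_\mu\geq m+3$ then every $d_\nu\geq m+1$ and the induction hypothesis applies term by term; if $d_\mu=m+2$, the only potentially troublesome terms have $d_\nu=m$, but now $d_\lambda=m+1$ and Lemma~\ref{vanishing:lem} annihilates the corresponding coefficients $\psi'_{\nu/\lambda}$.

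The key subtlety is the twofold invocation of Lemma~\ref{vanishing:lem}: once to close $V$ under multiplication precisely at the boundary $d_\mu=m+1$, and once to suppress the degree-$m$ terms that would otherwise derail the induction at $d_\mu=m+2$. Both obstructions sit exactly at the edge of the fusion ideal, and the vanishing lemma is tailored to each; no other difficulty is anticipated, since the triangular structure of the recurrence already exploited in Proposition~\ref{triangularity:prp} handles all remaining combinatorial bookkeeping.
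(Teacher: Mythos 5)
Your proposal is correct and follows essentially the same route as the paper: one inclusion via closure of the span under multiplication by the generators $\mathrm{e}_r$ (Pieri rule plus Lemma~\ref{vanishing:lem} at the boundary $d_\lambda=m+1$), and the reverse inclusion by lexicographic induction on $(d_\mu,r_\mu)$ through the recurrence, again invoking Lemma~\ref{vanishing:lem} to kill the terms that would fall below level $m+1$. Your explicit isolation of the two borderline cases $d_\mu=m+1$ and $d_\mu=m+2$ is a slightly more detailed bookkeeping of exactly the argument the paper gives.
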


\begin{proof}
By definition, the ideal $\mathcal{I}^{(n,m)}$ \eqref{Inm} consists of polynomials of the form
\begin{equation}\label{ideal}
\sum_{\lambda\in\Lambda^{(n)}_0,\, d_\lambda =m+1} a_\lambda(\mathbf{e}) P_\lambda (\mathbf{e})\quad \text{with}\ a_\lambda (\mathbf{e})\in \mathcal{R}^{(n)}_0.
\end{equation}
Since the monomials $\mathrm{e}_r=P_{1^r}(\mathbf{e})$ ($1\leq r<n$) generate $\mathcal{R}^{(n)}_0$, it is immediate from
the Pieri rule \eqref{esl-pieri:r} and Lemma \ref{vanishing:lem} that all products $a_\lambda(\mathbf{e}) P_\lambda (\mathbf{e})$ in the sum of Eq. \eqref{ideal} expand as  $\mathbb{R}$-linear combinations of
basis polynomials $P_\mu(\mathbf{e})$ with $\mu\in\Lambda^{(n)}_0$ and $d_\mu >m$. It remains to check that the pertinent basis polynomials $P_\mu(\mathbf{e})$ indeed belong to $\mathcal{I}^{(n,m)}$.
If $d_\mu=m+1$ this is the case by definition, while for $d_\mu>m+1$ it follows by lexicographical induction in $(d_\mu,r_\mu)$ from the recurrence in Eqs. \eqref{rec:a}--\eqref{psi} 
with the aid of Lemma \ref{vanishing:lem} (using also that $P_\nu (\mathbf{e})=P_{\underline{\nu}} (\mathbf{e})$ in $\mathcal{R}^{(n)}_0$). Indeed, on the RHS of \eqref{rec:a}
one has that $d_\lambda=d_\mu-1> m$, so  $\mathrm{e}_rP_\lambda(\mathbf{e})\in\mathcal{I}^{(n,m)}$ since $P_\lambda(\mathbf{e})\in\mathcal{I}^{(n,m)}$ by virtue of the induction hypothesis. The remaining terms on the RHS of Eq. \eqref{rec:a} involve polynomials $P_\nu(\mathbf{e})=P_{\underline{\nu}}(\mathbf{e})$ with
\emph{either} $d_\nu=d_\mu>m$ and $r_\nu <r_\mu$ \emph{or} (using Lemma \ref{vanishing:lem}) with $d_\mu>d_\nu >m $; the induction hypothesis therefore guarantees again that all of these terms belong to $\mathcal{I}^{(n,m)}$. We may thus conclude that $P_\mu (\mathbf{e})$ \eqref{rec:a} lies in  $\mathcal{I}^{(n,m)}$, therewith completing the induction step.
\end{proof}

\subsection{Fusion ring}
Upon dividing out  $\mathcal{I}^{(n,m)}$, one arrives in turn at a corresponding elliptic deformation of the fusion ring for $\widehat{\mathfrak{su}}(n)_m$ Wess-Zumino-Witten
conformal field theories:
\begin{equation}\label{f-ring}
\mathcal{R}^{(n,m)}_0=\mathcal{R}^{(n)}_0/ \mathcal{I}^{(n,m)} 
\end{equation}
 (cf.  again \cite{dif-mat-sen:conformal,fuc:affine,gep:fusion,goo-wen:littlewood,kor-str:slnk} and references therein).
For $P\in \mathcal{R}^{(n)}_0$, we will denote its coset $P+\mathcal{I}^{(n,m)}$ in $\mathcal{R}^{(n,m)}_0$ by $[P]$.
The cosets of the elliptic  eigenpolynomials labeled by bounded partitions in
\begin{equation}\label{dcone}
 \Lambda_0^{(n,m)}=  \{  \lambda\in \Lambda^{(n)} _0\mid d_\lambda \leq m \} 
\end{equation}
provide a basis for our elliptic fusion ring. Notice in this connection that the bijection \eqref{pw-bijection} maps the bounded partitions in question to (the nonaffine parts of) the dominant weights of
the affine Lie algebra $\widehat{\mathfrak{su}}(n)_m$ (cf. e.g. \cite[Section 2.1]{kor-str:slnk}).

\begin{proposition}[Basis for $\mathcal{R}^{(n,m)}_0$]\label{R-basis:prp}
For $\alpha=\frac{2\pi}{m+n\mathrm{g}}$ and $\mathrm{g}\in \mathbb{R}\setminus\mathbb{Q}$,
the cosets $[P_\mu (\mathbf{e}) ] $, $\mu\in\Lambda^{(n,m)}_0$ constitute a basis for $\mathcal{R}^{(n,m)}_0$.
\end{proposition}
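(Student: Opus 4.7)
The plan is to derive this directly from the basis result for $\mathcal{I}^{(n,m)}$ established in Proposition \ref{I-basis:prp}, together with the fact (indicated in the paragraph following Eq.~\eqref{t-symmetry}) that $\{P_\mu(\mathbf{e}) \mid \mu \in \Lambda_0^{(n)}\}$ forms a basis for $\mathcal{R}_0^{(n)}$. Concretely, I would split the claim into a spanning step and a linear independence step, both of which are essentially formal once Proposition \ref{I-basis:prp} is in hand.

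For the spanning part, I would take any coset $[Q] \in \mathcal{R}_0^{(n,m)}$ and expand a representative $Q \in \mathcal{R}_0^{(n)}$ as a finite $\mathbb{R}$-linear combination $Q = \sum_{\mu \in \Lambda_0^{(n)}} a_\mu P_\mu(\mathbf{e})$ using the basis for $\mathcal{R}_0^{(n)}$. Splitting the sum according to whether $d_\mu \leq m$ or $d_\mu > m$ and applying Proposition \ref{I-basis:prp} shows that the $d_\mu > m$ part lies in $\mathcal{I}^{(n,m)}$ and hence maps to zero in $\mathcal{R}_0^{(n,m)}$. Thus $[Q]$ is in the $\mathbb{R}$-span of $\{[P_\mu(\mathbf{e})] \mid \mu \in \Lambda_0^{(n,m)}\}$.

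For linear independence, I would suppose that $\sum_{\mu \in \Lambda_0^{(n,m)}} a_\mu [P_\mu(\mathbf{e})] = 0$ in $\mathcal{R}_0^{(n,m)}$, i.e.\ that $\sum_{\mu \in \Lambda_0^{(n,m)}} a_\mu P_\mu(\mathbf{e}) \in \mathcal{I}^{(n,m)}$. By Proposition \ref{I-basis:prp}, the right-hand side can in turn be written as an $\mathbb{R}$-linear combination $\sum_{\nu \in \Lambda_0^{(n)},\, d_\nu > m} b_\nu P_\nu(\mathbf{e})$. Rewriting this as $\sum_{\mu \in \Lambda_0^{(n,m)}} a_\mu P_\mu(\mathbf{e}) - \sum_{d_\nu > m} b_\nu P_\nu(\mathbf{e}) = 0$ in $\mathcal{R}_0^{(n)}$ and invoking the linear independence of $\{P_\mu(\mathbf{e}) \mid \mu \in \Lambda_0^{(n)}\}$ forces all $a_\mu = 0$ (and all $b_\nu = 0$).

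There is really no main obstacle here: the hard analytic work was already done in establishing Proposition \ref{I-basis:prp} via Lemma \ref{vanishing:lem} and the lexicographic induction on $(d_\mu, r_\mu)$. The present statement is a clean corollary, amounting to the standard linear-algebraic observation that if $\{v_i\} \sqcup \{w_j\}$ is a basis of a vector space $V$ and $\{w_j\}$ is a basis of a subspace $W$, then $\{[v_i]\}$ is a basis of $V/W$; the roles are played here by the two halves of the partition-basis of $\mathcal{R}_0^{(n)}$ according to the cutoff $d_\mu \leq m$ versus $d_\mu > m$.
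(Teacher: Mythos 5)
Your proposal is correct and follows essentially the same route as the paper: both arguments reduce the claim to Proposition \ref{I-basis:prp} together with the fact that $\{P_\mu(\mathbf{e})\mid\mu\in\Lambda_0^{(n)}\}$ is a basis of $\mathcal{R}_0^{(n)}$, and then apply the standard observation about bases of a quotient by a subspace spanned by part of a basis. You merely spell out the spanning and independence steps that the paper leaves implicit.
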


\begin{proof}
Since the polynomials $P_\mu(\mathbf{e})$, $\mu\in\Lambda^{(n)}_0$ form a basis for $\mathcal{R}^{(n)}_0$, the assertion in the proposition is immediate from Proposition \ref{I-basis:prp}. Indeed, the kernel of the ring homomorphism $P\to [P]$ from
$\mathcal{R}^{(n)}_0$ onto $\mathcal{R}^{(n,m)}_0$ is equal to the  $\mathbb{R}$-span of the polynomials
$P_\mu(\mathbf{e})$, $\mu\in\Lambda^{(n)}_0\setminus\Lambda^{(n,m)}_0$ (by Proposition \ref{I-basis:prp}), so the cosets 
$[P_\mu(\mathbf{e})]$, $\mu\in\Lambda^{(n,m)}_0$ constitute a basis for $\mathcal{R}^{(n,m)}_0$.
\end{proof}

It is now straightforward to express the structure constants of $\mathcal{R}^{(n,m)}_0$ in the basis $[P_\mu(\mathbf{e})]$, $\mu\in\Lambda^{(n,m)}_0$ in terms
of elliptic Littlewood-Richardson coefficients.

\begin{theorem}[Structure Constants of $\mathcal{R}^{(n,m)}_0$]\label{e-ring:thm}
For $\mathrm{g}\in\mathbb{R}\setminus\mathbb{Q}$, 
the structure constants of $\mathcal{R}^{(n,m)}_0$ in the basis $[P_\mu(\mathbf{e})]$, $\mu\in\Lambda^{(n,m)}_0$ can be expressed in terms of elliptic Littlewood-Richardson coefficients as follows:
\begin{subequations}
\begin{equation}\label{eslm-LR}
[P_\lambda ]  [ P_\mu ] =  \sum_{\substack{  \nu\supset \lambda,\ \nu\supset\mu \\ |\nu |= |\lambda | +|\mu | ,\, d_\nu \leq m    }} 
c^\nu_{\lambda,\mu} \bigl({\textstyle \frac{2\pi}{m+n\mathrm{g}} }, \mathrm{g};p\bigr) [P_{\underline{\nu} } ]
\qquad (\lambda ,\mu\in \Lambda^{(n,m)}_0,\, \nu\in\Lambda^{(n)} ).
\end{equation}
In particular, for  $1\leq r< n$ and $\lambda\in\Lambda^{(n,m)}_0$ one has explicitly that
\begin{equation}\label{eslm-pieri:r}
[P_\lambda]  [P_{1^r} ]  = 
\sum_{\substack{\lambda\subset\nu\subset\lambda+1^{n} \\ |\nu|=|\lambda |+r ,\, d_\nu \leq m}}   \psi^\prime_{\nu/\lambda}  \bigl({\textstyle \frac{2\pi}{m+n\mathrm{g}} }, \mathrm{g};p\bigr)
[P_{\underline{\nu}}  ] .
\end{equation}
\end{subequations}
\end{theorem}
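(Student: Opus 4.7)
The plan is to obtain Theorem \ref{e-ring:thm} simply by pushing the elliptic Littlewood–Richardson expansion \eqref{esl-LR} through the canonical projection $\pi\colon \mathcal{R}^{(n)}_0 \twoheadrightarrow \mathcal{R}^{(n,m)}_0$, $P\mapsto [P]$. Because $\pi$ is a ring homomorphism, for any $\lambda,\mu\in\Lambda^{(n,m)}_0\subset\Lambda^{(n)}_0$ one has
\begin{equation*}
[P_\lambda]\,[P_\mu]=\pi(P_\lambda P_\mu)=\sum_{\substack{\nu\supset\lambda,\ \nu\supset\mu\\ |\nu|=|\lambda|+|\mu|}} c^\nu_{\lambda,\mu}\!\bigl({\textstyle \tfrac{2\pi}{m+n\mathrm g}},\mathrm g;p\bigr)\,[P_{\underline\nu}].
\end{equation*}
The only thing to verify is thus which cosets $[P_{\underline\nu}]$ survive the quotient.

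The key ingredient is Proposition \ref{I-basis:prp}: the polynomials $P_\kappa(\mathbf e)$ with $\kappa\in\Lambda^{(n)}_0$ and $d_\kappa>m$ span the ideal $\mathcal{I}^{(n,m)}$, so $[P_\kappa]=0$ in $\mathcal{R}^{(n,m)}_0$ precisely when $d_\kappa>m$. Applied to $\kappa=\underline\nu$, I note that by the definition of $\underline\nu=(\nu_1-\nu_n,\ldots,\nu_{n-1}-\nu_n,0)$ one has the elementary identity $d_{\underline\nu}=\nu_1-\nu_n=d_\nu$, so $[P_{\underline\nu}]=0$ exactly when $d_\nu>m$. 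Consequently the sum on the right collapses onto those $\nu$ with $d_\nu\leq m$, yielding \eqref{eslm-LR}.

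For the Pieri specialization \eqref{eslm-pieri:r} I would simply take $\mu=1^r$ in \eqref{eslm-LR}, or equivalently apply $\pi$ directly to Corollary \ref{pieri:cor} in the form \eqref{esl-pieri:r}; since $c^\nu_{\lambda,1^r}=\psi^\prime_{\nu/\lambda}$ on the nested interval $\lambda\subset\nu\subset\lambda+1^n$ with $|\nu|=|\lambda|+r$, the same restriction $d_\nu\leq m$ gives the claim. I would remark that this projection is consistent with Lemma \ref{vanishing:lem} — that lemma guarantees that the coefficients $\psi^\prime_{\nu/\lambda}$ already vanish when jumping \emph{out of} the level-$m$ alcove from a boundary partition $d_\lambda=m+1$, which is precisely what made Proposition \ref{I-basis:prp} work in the first place; no further cancellations of $\psi^\prime$ are needed here because $[P_{\underline\nu}]$ itself vanishes whenever $d_\nu=m+1$.

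Since everything reduces to applying the ring-homomorphism and invoking two earlier results, there is no genuine obstacle. The only point requiring care is bookkeeping: confirming the identity $d_{\underline\nu}=d_\nu$ so that the truncation condition on the sum is stated in the natural form $d_\nu\leq m$, and making sure that the specialization $\alpha=\tfrac{2\pi}{m+n\mathrm g}$ with $\mathrm g\in\mathbb R\setminus\mathbb Q$ is in force throughout so that \eqref{denom-regular} holds and the polynomials $P_\mu(\mathbf e)$ and their Littlewood–Richardson coefficients remain well-defined.
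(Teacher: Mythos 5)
Your proposal is correct and follows essentially the same route as the paper: apply the quotient homomorphism $P\mapsto[P]$ to Eqs.~\eqref{esl-LR} and \eqref{esl-pieri:r} and use Proposition \ref{I-basis:prp} (together with Proposition \ref{R-basis:prp}, which guarantees that the surviving cosets $[P_{\underline{\nu}}]$ with $d_\nu\leq m$ form a basis, so the resulting coefficients are indeed the structure constants) to drop the terms with $d_\nu>m$. The bookkeeping identity $d_{\underline{\nu}}=d_\nu$ that you single out is exactly the point that makes the truncation of the sum legitimate.
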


\begin{proof}
In view of Propositions \ref{I-basis:prp} and \ref{R-basis:prp},
the asserted formulas in Eqs. \eqref{eslm-LR} and \eqref{eslm-pieri:r} follow from Eqs. \eqref{esl-LR} and \eqref{esl-pieri:r} upon applying the ring homomorphism $P\to [P]$ from
$\mathcal{R}^{(n)}_0$ onto $\mathcal{R}^{(n,m)}_0$.
\end{proof}

\section{$\widehat{\mathfrak{su}}(n)_m$ Verlinde algebras from elliptic Ruijsenaars systems}\label{sec5}

\subsection{Analyticity}
For $\mathrm{g}>0$ and $\mu\in\Lambda^{(n)}_0$ with $d_\mu\leq m+1$, we now employ analytic continuation in the parameters of
$P_\mu \bigl(\mathbf{e};\frac{2\pi}{m+n\mathrm{g}},\mathrm{g};p\bigr)\in \mathcal{R}^{(n)}_0$ so as to remove the restriction that $\mathrm{g}$ be irrational.

\begin{lemma}[Positivity of the Recurrence Coefficients]\label{regularity:lem}
Let $\alpha=\frac{2\pi}{m+n\mathrm{g}}$,  $\mathrm{g}>0$, $-1<p<1$, and $\nu \in\Lambda^{(n)}$.

\emph{(i)}  For $1\leq j< k\leq n$, one has that
$[\nu_j-\nu_k+(k-j)\mathrm{g};p]  > 0$ if $\nu_j-\nu_k \leq m$.

\emph{(ii)} For $\lambda\in\Lambda^{(n,m)}_0$ and $\lambda\subset\nu\subset\lambda+1^n$, the recurrence coefficient
$ \psi^\prime_{\nu/\lambda} (\frac{2\pi}{m+n\mathrm{g}},\mathrm{g};p)$ \eqref{psi} is analytic in $\mathrm{g}\in (0,\infty)$ and positive.
\end{lemma}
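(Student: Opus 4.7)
The plan is to prove part (i) directly from the product representation of $[z;p]$, and then to deduce part (ii) by checking that each of the four theta-arguments appearing in $\psi^\prime_{\nu/\lambda}$ falls in the range already covered by (i).

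For part (i), I would start from the product formula in \eqref{theta} and cancel the factor $2p^{1/4}\prod_{l\ge 1}(1-p^{2l})^{3}$ shared between $\vartheta_{1}(\tfrac{\alpha z}{2};p)$ and $\vartheta_{1}^{\prime}(0;p)$ (using the classical identity $\vartheta_{1}^{\prime}(0;p)=2p^{1/4}\prod_{l\ge 1}(1-p^{2l})^{3}$). This yields, for all $-1<p<1$, the representation
$$[z;p]=\frac{2}{\alpha}\cdot\frac{\sin(\tfrac{\alpha z}{2})\prod_{l\ge 1}\bigl(1-2p^{2l}\cos(\alpha z)+p^{4l}\bigr)}{\prod_{l\ge 1}(1-p^{2l})^{2}}.$$
Each factor in the numerator product equals $\lvert 1-p^{2l}e^{\mathrm{i}\alpha z}\rvert^{2}\ge (1-\lvert p\rvert^{2l})^{2}>0$, while the denominator product is manifestly positive, so the sign of $[z;p]$ coincides with that of $\sin(\tfrac{\alpha z}{2})$. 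Given $\nu\in\Lambda^{(n)}$ with $1\le j<k\le n$ and $\nu_{j}-\nu_{k}\le m$, the argument $z=\nu_{j}-\nu_{k}+(k-j)\mathrm{g}$ is bounded below by $(k-j)\mathrm{g}>0$ (using $\nu_{j}\ge\nu_{k}$) and above by $m+(n-1)\mathrm{g}<m+n\mathrm{g}=2\pi/\alpha$, so $\tfrac{\alpha z}{2}\in(0,\pi)$ and the sine is strictly positive.

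The key input for part (ii) is the partition-theoretic observation that whenever $\theta_{j}-\theta_{k}=-1$ with $j<k$ and $\nu=\lambda+\theta\in\Lambda^{(n)}$, one must have $\lambda_{j}>\lambda_{k}$. Indeed, $\theta_{j}=0$ and $\theta_{k}=1$ combined with the chain $\nu_{j}\ge\nu_{j+1}\ge\cdots\ge\nu_{k}$ forces $\lambda_{j}=\nu_{j}\ge\nu_{k}=\lambda_{k}+1$. With this in hand, I would check that each of the four theta-arguments appearing in the product \eqref{psi}, namely $\nu_{j}-\nu_{k}+(k-j+1)\mathrm{g}$, $\nu_{j}-\nu_{k}+(k-j)\mathrm{g}$, $\lambda_{j}-\lambda_{k}+(k-j-1)\mathrm{g}$ and $\lambda_{j}-\lambda_{k}+(k-j)\mathrm{g}$, is strictly positive for $\mathrm{g}>0$ (the borderline case $k=j+1$ in the third argument is safeguarded precisely by $\lambda_{j}-\lambda_{k}\ge 1$), and is bounded above by at most $m+(n-1)\mathrm{g}<2\pi/\alpha$ by virtue of $\lambda_{j}-\lambda_{k}\le d_{\lambda}\le m$. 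Hence part (i) applies to each factor, giving positivity of both numerators and denominators in \eqref{psi} and therefore of $\psi^\prime_{\nu/\lambda}$ itself.

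Analyticity in $\mathrm{g}\in(0,\infty)$ then follows because $[z;p]$ is entire in its first argument, $\alpha=2\pi/(m+n\mathrm{g})$ depends analytically on $\mathrm{g}>0$, and the two denominator factors in \eqref{psi} have just been shown to be nonvanishing for all $\mathrm{g}>0$. The only subtle point in the whole argument is the bookkeeping in (ii) that avoids a spurious zero in the factor $[\lambda_{j}-\lambda_{k}+(k-j-1)\mathrm{g};p]$ when $k=j+1$, which is exactly what the partition-theoretic implication $\theta_{j}=0,\theta_{k}=1\Rightarrow\lambda_{j}>\lambda_{k}$ rules out; everything else is a straightforward verification of the bounds imposed by $\lambda\in\Lambda^{(n,m)}_{0}$.
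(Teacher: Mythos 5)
Your proof is correct and follows essentially the same route as the paper: positivity of $[z;p]$ for $0<z<2\pi/\alpha$ read off from the theta product expansion, followed by a check that all four arguments in \eqref{psi} lie in that range, with the borderline factor $[\lambda_j-\lambda_k+(k-j-1)\mathrm{g}]$ at $k=j+1$ protected by $\lambda_j-\lambda_k=\nu_j-\nu_k+1\geq 1$ (the paper encodes exactly this via the strict inequality $0\leq\nu_j-\nu_k+(k-j-1)\mathrm{g}<\lambda_j-\lambda_k+(k-j-1)\mathrm{g}$). The only blemish is that your quoted uniform upper bound $m+(n-1)\mathrm{g}$ is not literally valid for the factor $\nu_j-\nu_k+(k-j+1)\mathrm{g}$ when $k-j+1=n$ and $\mathrm{g}>1$; the correct bound there is $m-1+n\mathrm{g}$, which is still $<m+n\mathrm{g}=2\pi/\alpha$, so the conclusion stands.
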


 \begin{proof}
 \emph{(i)} The asserted positivity  of the theta factor is plain from the product expansion  \eqref{theta} via the estimate
 \begin{equation*}
 0<\mathrm{g}\leq \nu_j-\nu_k+(k-j)\mathrm{g} < m+n\mathrm{g} =\frac{2\pi}{\alpha} .
 \end{equation*}

\emph{(ii)} The analyticity of  $\psi^\prime_{\nu/\lambda} (\frac{2\pi}{m+n\mathrm{g}},\mathrm{g};p)$ in $\mathrm{g}>0$  follows in turn from the fact that the zeros in the denominators are avoided as a consequence of part \emph{(i)}.  Indeed, in all factors of $\psi^\prime_{\nu/\lambda} (\frac{2\pi}{m+n\mathrm{g}},\mathrm{g};p)$ \eqref{psi} the denominators stay positive because
$\lambda_j-\lambda_k\leq d_\lambda\leq m$ and $\nu_j-\nu_k=\lambda_j-\lambda_k+\theta_j-\theta_k\leq m-1$. The positivity of the corresponding numerators
is subsequently seen from the estimates 
\begin{equation*}
0\leq\nu_j-\nu_k+(k-j-1)\mathrm{g} < \lambda_j-\lambda_k+(k-j-1)\mathrm{g} < m+n\mathrm{g} 
\end{equation*}
and $0< \nu_j-\nu_k+(k-j+1)\mathrm{g} < m+n\mathrm{g} $.
 \end{proof}

 \begin{proposition}[Analyticity of Basis Polynomials]\label{P-reg:prp}
For $\mu \in\Lambda^{(n)}_0$ the polynomial $P_\mu \bigl( \mathbf{e};\frac{2\pi}{m+n\mathrm{g}},\mathrm{g};p\bigr)\in \mathcal{R}^{(n)}_0$
 is analytic, both in $\mathrm{g}\in (0,\infty)$ and in $p\in (-1,1)$, provided $d_\mu\leq m+1$.
 \end{proposition}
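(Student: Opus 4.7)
The plan is a lexicographic induction on $(d_\mu, r_\mu)$ for $\mu \in \Lambda^{(n)}_0$ with $d_\mu \leq m+1$, propagating analyticity along the recurrence \eqref{rec:a}--\eqref{psi} viewed inside $\mathcal{R}^{(n)}_0$. The base case $\mu=0$ is immediate since $P_0(\mathbf{e})=1$ is constant. For the inductive step I would set $r=r_\mu$ and $\lambda=\mu-1^r$; because $\mu_n=\mu_{n+1}=0$ forces $r<n$, one has $\lambda\in\Lambda^{(n)}_0$ with $d_\lambda=d_\mu-1\leq m$, so $\lambda\in\Lambda^{(n,m)}_0$. This placement of $\lambda$ is what unlocks Lemma \ref{regularity:lem}(ii).

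With $\lambda\in\Lambda^{(n,m)}_0$ in hand, Lemma \ref{regularity:lem}(ii) delivers that each coefficient $\psi^\prime_{\nu/\lambda}\bigl(\tfrac{2\pi}{m+n\mathrm{g}},\mathrm{g};p\bigr)$ appearing on the RHS of \eqref{rec:a} is analytic in $\mathrm{g}\in(0,\infty)$. Joint analyticity in $p\in(-1,1)$ is essentially automatic: the factor $[z;p]$ extends analytically in $p$ by \eqref{scaled-theta}, \eqref{theta}, while Lemma \ref{regularity:lem}(i) keeps the denominators positive (hence nonvanishing) throughout. The leading term $\mathrm{e}_r P_\lambda(\mathbf{e})$ is analytic by the inductive hypothesis since $d_\lambda=d_\mu-1<d_\mu$.

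The careful bookkeeping concerns the summands. Each $\nu$ satisfies $\lambda\subset\nu\subset\lambda+1^n$ with $|\nu|=|\mu|$, whence $\nu_1\leq\mu_1$ and $\nu_n\in\{0,1\}$. Using \eqref{t-symmetry} together with $\mathrm{e}_n=1$ in $\mathcal{R}^{(n)}_0$, I would replace $P_\nu(\mathbf{e})$ by $P_{\underline{\nu}}(\mathbf{e})$ throughout. When $\nu_n=1$, one gets $d_{\underline{\nu}}=\nu_1-1\leq d_\mu-1$, a strict drop in $d$. When $\nu_n=0$, either $d_\nu<d_\mu$ outright, or $d_\nu=d_\mu$ forces $\nu_1=\mu_1$; in the latter case the same argument used in the proof of Proposition \ref{triangularity:prp} rules out $r_\nu\geq r_\mu$ (which would force $\theta_j=1$ for $j\leq r$ and hence $\nu=\mu$, contradicting $\nu\in\Lambda^{(n)}\setminus\{\mu\}$). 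In every branch one has $(d_{\underline{\nu}},r_{\underline{\nu}})<(d_\mu,r_\mu)$ lexicographically together with $d_{\underline{\nu}}\leq m+1$, so the induction hypothesis applies and $P_{\underline{\nu}}(\mathbf{e})$ is analytic in $(\mathrm{g},p)\in(0,\infty)\times(-1,1)$.

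Combining these ingredients, the RHS of \eqref{rec:a} displays $P_\mu(\mathbf{e})$ inside $\mathcal{R}^{(n)}_0$ as a finite linear combination of jointly analytic functions of $(\mathrm{g},p)$ with jointly analytic coefficients, closing the induction. The main obstacle is the bookkeeping for the replacement $\nu\mapsto\underline{\nu}$: one must verify that it never increases the lex index and never leaves the scope $d_{\underline{\nu}}\leq m+1$ of the induction hypothesis, while simultaneously confirming that $\lambda$ always lies in $\Lambda^{(n,m)}_0$ so that Lemma \ref{regularity:lem}(ii) governs the coefficients. Everything else is a routine transcription of the recurrence together with the analytic extension properties of $[z;p]$.
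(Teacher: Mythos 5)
Your argument is correct and follows essentially the same route as the paper: the paper's (one-line) proof likewise invokes Lemma \ref{regularity:lem} to conclude that every coefficient $\psi^\prime_{\nu/\lambda}$ occurring in the recurrence \eqref{rec:a}--\eqref{psi} is analytic in $\mathrm{g}\in(0,\infty)$ and $p\in(-1,1)$, whence the polynomials inherit the analyticity. Your write-up simply makes explicit the lexicographic induction and the $\nu\mapsto\underline{\nu}$ bookkeeping that the paper leaves implicit; both checks are accurate.
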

 
 \begin{proof}
 By  Lemma \ref{regularity:lem},
 the conditions make sure that all coefficients $ \psi^\prime_{\nu/\lambda} (\frac{2\pi}{m+n\mathrm{g}},\mathrm{g};p)$ in the recurrence \eqref{rec:a}--\eqref{psi} for
 $P_\mu \bigl( \mathbf{e};\frac{2\pi}{m+n\mathrm{g}},\mathrm{g};p\bigr)$ enjoy the asserted analyticity, which is therefore inherited by the polynomials in question.
  \end{proof}
 
 For later reference, let us also explicitly check that the pertinent normalization coefficients $c_\mu(\alpha,\mathrm{g};p)$ from Eq. \eqref{c} permit analytic continuation to $\mathrm{g}>0$.
 
 \begin{lemma}[Positivity of the Normalization Constants]\label{c-reg:lem}
 For $\mu\in\Lambda^{(n,m)}_0$, $\mathrm{g}>0$ and $-1<p<1$, the normalization coefficient
 $c_\mu(\frac{2\pi}{m+n\mathrm{g}},\mathrm{g};p)$ \eqref{c} is \emph{positive}.
  \end{lemma}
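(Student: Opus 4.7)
The plan is to reduce the positivity of $c_\mu$ to the factor-wise positivity criterion already established in Lemma \ref{regularity:lem}(i), by expanding the elliptic factorial into individual theta factors and checking that each argument lands in the interval $(0, \tfrac{2\pi}{\alpha})$ where the product expansion \eqref{theta} of $\vartheta_1$ visibly has positive sign.

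Concretely, I would first rewrite
\begin{equation*}
c_\mu\bigl({\textstyle \frac{2\pi}{m+n\mathrm{g}}},\mathrm{g};p\bigr) = \prod_{1\leq j<k\leq n}\; \prod_{0\leq l < \mu_j-\mu_k} \frac{[(k-j)\mathrm{g}+l;p]}{[(k-j+1)\mathrm{g}+l;p]},
\end{equation*}
using the definition of $[z]_k$, so that positivity of $c_\mu$ follows from showing that every single factor $[\cdot;p]$ in numerator and denominator is strictly positive. Since $\mu\in\Lambda_0^{(n,m)}$ implies $\mu_j-\mu_k\leq d_\mu\leq m$, the index $l$ in the inner product never exceeds $m-1$. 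Hence the numerator argument $(k-j)\mathrm{g}+l$ lies between $\mathrm{g}$ (from $k-j\geq 1$, $l\geq 0$) and $(n-1)\mathrm{g}+m-1 < m+n\mathrm{g} = \tfrac{2\pi}{\alpha}$, while the denominator argument $(k-j+1)\mathrm{g}+l$ lies between $2\mathrm{g}$ and $n\mathrm{g}+m-1 < m+n\mathrm{g}$; in both cases the argument sits strictly inside $(0,\tfrac{2\pi}{\alpha})$.

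Applying Lemma \ref{regularity:lem}(i) (or, equivalently, the estimate used in its proof, which invokes the product expansion \eqref{theta} of $\vartheta_1$ to conclude that $[z;p]>0$ for $0<z<\tfrac{2\pi}{\alpha}$ and $-1<p<1$) to each factor then shows that the entire product is a ratio of positive quantities, yielding $c_\mu\bigl(\tfrac{2\pi}{m+n\mathrm{g}},\mathrm{g};p\bigr)>0$. There is no serious obstacle here: the argument is pure bookkeeping on the ranges of $(k-j)\mathrm{g}+l$ and $(k-j+1)\mathrm{g}+l$, with the cap $d_\mu\leq m$ from membership in $\Lambda_0^{(n,m)}$ doing exactly the work needed to keep every theta argument below $\tfrac{2\pi}{\alpha}$.
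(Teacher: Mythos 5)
Your proof is correct and follows essentially the same route as the paper: both arguments expand $c_\mu$ into individual theta factors, observe that every argument is of the form $l+k\mathrm{g}$ with $0\leq l<m$ and $1\leq k\leq n$ (hence lies strictly inside $(0,\tfrac{2\pi}{\alpha})=(0,m+n\mathrm{g})$), and conclude positivity from the product expansion \eqref{theta}. Your version just spells out the range bookkeeping a little more explicitly; there is no substantive difference.
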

 
 \begin{proof}
 The arguments of the theta functions in the numerator and the denominator of  $c_\mu(\frac{2\pi}{m+n\mathrm{g}},\mathrm{g};p)$
 are of the form $l+k\mathrm{g}$, with $0\leq l <m$ and $1\leq k \leq n$, so
 $0<l+k\mathrm{g}< m+n\mathrm{g}=\frac{2\pi}{\alpha}$.  The corresponding values of the theta function are thus positive by the product formula \eqref{theta}.
 \end{proof}

\subsection{Spectral variety}

In \cite{die-gor:elliptic} it was shown that for $\alpha$ \eqref{tc} with $\mathrm{g}>0$, the
elliptic Ruijsenaars operators truncate to commuting discrete difference operators of the form (cf. Eqs. \eqref{Dr:a}, \eqref{Dr:b}):
\begin{equation}\label{Drf}
(D_r  f)_\lambda =  \sum_{\substack{\lambda\subset\nu\subset\lambda+1^{n} \\ |\nu|=|\lambda |+r ,\, d_\nu \leq m}} 
B_{\nu/\lambda}  \bigl( {\textstyle \frac{2\pi}{m+n\mathrm{g}}},\mathrm{g};p\bigr)  f_{\underline\nu} ,\quad 1\leq r <n.
\end{equation}
These operators turn out to be normal  in the Hilbert space $\ell^2 (\Lambda^{(n,m)}_0,\Delta)$ with the inner product
\begin{subequations}
\begin{equation}\label{ip}
\langle f,g\rangle_\Delta=\sum_{\lambda\in\Lambda_0^{(n,m)}}  f_\lambda \overline{ g_\lambda }\Delta_\lambda
\qquad \bigl( f,g\in \ell^2(\Lambda^{(n,m)}_0,\Delta ) \bigr),
\end{equation}
where 
\begin{equation}\label{delta}
\Delta_\lambda =\Delta_\lambda (\alpha,\mathrm{g};p)=
\prod_{1\leq j<k\leq n}
{\textstyle
\frac{[\lambda_j-\lambda_k+(k-j)\mathrm{g}]}{[(k-j)\mathrm{g} ]}\frac{[(k-j+1)\mathrm{g} ]_{\lambda_j-\lambda_k}}{[1+(k-j-1)\mathrm{g} ]_{\lambda_j-\lambda_k}} } 
\end{equation}
\end{subequations}
(cf.  \cite[Proposition 6]{die-gor:elliptic}). Their joint spectrum is moreover given by $\binom{n-1+m}{m}$ multiplicity-free vectors (cf.  \cite[Corollary 10]{die-gor:elliptic})
\begin{subequations}
\begin{equation}\label{evR:a}
\mathbf{e}_\nu = \mathbf{e}_\nu  \bigl( {\textstyle \frac{2\pi}{m+n\mathrm{g}}},\mathrm{g};p\bigr) =\bigl( \mathrm{e}_{1,\nu},\ldots ,\mathrm{e}_{n-1,\nu},1\bigr)\in\mathbb{C}^n\quad  (\nu\in\Lambda_0^{(n,m)})
\end{equation}
 such that
\begin{equation}\label{evR:b}
D_r p(\mathbf{e}_\nu) = \mathrm{e}_{r,\nu}   p(\mathbf{e}_\nu)\quad (1\leq r <n,\, \nu\in\Lambda_0^{(n,m)}) ,
\end{equation}
which are analytic in $p\in (-1,1)$ with
\begin{align}\label{evR:c}
\lim_{p\to 0} & \mathrm{e}_{r,\nu}  \bigl( {\textstyle \frac{2\pi}{m+n\mathrm{g}}},\mathrm{g};p\bigr)  = \\
&q^{-r \bigl( \frac{|\nu|}{n}+\frac{(n-1)\mathrm{g}}{2}\bigr)}
m_{1^r} (q^{\nu_1+(n-1)\mathrm{g}},q^{\nu_2+(n-2)\mathrm{g}},\ldots, q^{\nu_{n-1}+\mathrm{g}},1) . \nonumber
\end{align}
\end{subequations}
Here  $p(\mathbf{e})$ is given by Eqs.  \eqref{p}, \eqref{c} and $q=e^{\frac{2\pi\text{i}}{m+n\mathrm{g}}}$.

Let us now define the \emph{spectral variety} as the zero locus $V(\mathcal{I}^{(n,m)})$ of the fusion ideal $\mathcal{I}^{(n,m)}$ \eqref{Inm}:
\begin{equation}\label{sv-def}
V(\mathcal{I}^{(n,m)})=\{  \mathbf{e}=(\mathrm{e}_1,\ldots,\mathrm{e}_{n-1},1)\in\mathbb{C}^n \mid P(\mathbf{e})=0,\, \forall P\in\mathcal{I}^{(n,m)} \} .
\end{equation}

\begin{proposition}[Spectral Variety]\label{sv:prp}
For $\alpha=\frac{2\pi}{m+n\mathrm{g}}$ with  $\mathrm{g}\in (0,\infty)$, the spectral variety $V(\mathcal{I}^{(n,m)})$  \eqref{sv-def} is given by the joint spectrum
$\mathbf{e}_\nu  \bigl( {\textstyle \frac{2\pi}{m+n\mathrm{g}}},\mathrm{g};p\bigr) $ \eqref{evR:a}--\eqref{evR:c} of the truncated elliptic Ruijsenaars operators $D_r$ \eqref{Drf}:
\begin{equation}\label{sv-evR}
V(\mathcal{I}^{(n,m)})=\mathbb{E}^{(n,m)}_0= \{ \mathbf{e}_\nu  \bigl( {\textstyle \frac{2\pi}{m+n\mathrm{g}}},\mathrm{g};p\bigr)  \mid \nu \in\Lambda^{(n,m)}_0 \} .
\end{equation}
\end{proposition}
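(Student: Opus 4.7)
The plan is to reduce the proposition to a standard fact from commutative algebra: for any finite-dimensional commutative $\mathbb{C}$-algebra of the form $A=\mathbb{C}[y_1,\ldots,y_k]/J$, the common zero locus $V(J)\subset\mathbb{C}^k$ coincides with the joint spectrum of the multiplication operators by $y_1,\ldots,y_k$ on $A$, since both are parametrized by algebra homomorphisms $A\to\mathbb{C}$. I will apply this with $A=\mathcal{R}^{(n,m)}_0\otimes\mathbb{C}$ and $y_r=[\mathrm{e}_r]$, and then identify the joint spectrum of $[\mathrm{e}_r]$-multiplication with $\mathbb{E}^{(n,m)}_0$ by comparison with the truncated Ruijsenaars operators $D_r$ of \eqref{Drf}.

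I work first under the assumption $\mathrm{g}\in(0,\infty)\setminus\mathbb{Q}$, so that Proposition~\ref{R-basis:prp} provides the basis $\{[P_\mu]:\mu\in\Lambda^{(n,m)}_0\}$ of $\mathcal{R}^{(n,m)}_0$ and Theorem~\ref{e-ring:thm} (more precisely \eqref{eslm-pieri:r}) supplies the Pieri rule for $[\mathrm{e}_r]$-multiplication. Writing $M_r$ for the matrix of $[\mathrm{e}_r]$-multiplication in this basis, $(M_r)_{\underline\kappa,\lambda}=\psi'_{\kappa/\lambda}$ by \eqref{eslm-pieri:r}; meanwhile the matrix of the truncated operator $D_r$ from \eqref{Drf} in the standard basis of $\ell^2(\Lambda^{(n,m)}_0,\Delta)$ is $(\tilde D_r)_{\lambda,\underline\kappa}=B_{\kappa/\lambda}$. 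The gauge identity \eqref{gauge} combined with $c_\kappa=c_{\underline\kappa}$ then yields the similarity $\tilde D_r = C M_r^T C^{-1}$ with the positive diagonal matrix $C=\mathrm{diag}(c_\mu)_{\mu\in\Lambda^{(n,m)}_0}$. Since $\tilde D_r$ is simultaneously diagonalized with multiplicity-free joint spectrum $\mathbb{E}^{(n,m)}_0$ by the result from \cite{die-gor:elliptic} recalled in \eqref{evR:a}--\eqref{evR:c}, so are the $M_r$, and the commutative-algebra principle above delivers $V(\mathcal{I}^{(n,m)})=\mathbb{E}^{(n,m)}_0$.

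To pass to arbitrary $\mathrm{g}\in(0,\infty)$, I would argue by analytic continuation. By Proposition~\ref{I-basis:prp}, the inclusion $\mathbb{E}^{(n,m)}_0\subset V(\mathcal{I}^{(n,m)})$ amounts to the vanishing $P_\mu(\mathbf{e}_\nu;\tfrac{2\pi}{m+n\mathrm{g}},\mathrm{g};p)=0$ for every $\mu\in\Lambda^{(n)}_0$ with $d_\mu=m+1$ and every $\nu\in\Lambda^{(n,m)}_0$; by Proposition~\ref{P-reg:prp} and the analyticity of $\mathbf{e}_\nu$ recorded in \eqref{evR:c}, this identity is analytic in $\mathrm{g}\in(0,\infty)$ and already holds on the dense subset of irrational values, hence throughout. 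The reverse inclusion is secured by the dimension bound $\dim_\mathbb{C}(\mathcal{R}^{(n,m)}_0\otimes\mathbb{C})\leq|\Lambda^{(n,m)}_0|$, valid for all $\mathrm{g}>0$: the cosets $\{[P_\mu]:\mu\in\Lambda^{(n,m)}_0\}$ span $\mathcal{R}^{(n,m)}_0$ because each $[\mathrm{e}_r][P_\mu]$ lies in this span by the truncated Pieri rule \eqref{eslm-pieri:r}, whose analytic extension to $\mathrm{g}>0$ is guaranteed by Lemma~\ref{regularity:lem}.

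The main obstacle is the careful bookkeeping in the similarity verification $\tilde D_r=CM_r^TC^{-1}$: the passage between the partitions $\kappa\subset\lambda+1^n$ indexing the summation in \eqref{Drf} and their normalizations $\underline\kappa\in\Lambda^{(n,m)}_0$ indexing the basis of $\mathcal{R}^{(n,m)}_0$, together with the gauge factors $c_\lambda/c_{\underline\kappa}$ arising from \eqref{gauge}, must be tracked consistently. Once this step is secured, the remaining algebraic content is routine and the extension to rational $\mathrm{g}$ reduces to the analyticity results already established in Section~\ref{sec5}.
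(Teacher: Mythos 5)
Your argument is correct, but it takes a genuinely different route from the paper's. The paper proves the two inclusions directly: for $\mathbf{e}\in V(\mathcal{I}^{(n,m)})$ it specializes the Pieri rule \eqref{esl-pieri:r} (extended to all $\mathrm{g}>0$ via Proposition \ref{P-reg:prp}) to obtain the truncated relation \eqref{fPieri}, converts this by the gauge identity \eqref{gauge} and Lemma \ref{c-reg:lem} into the eigenvalue equation $D_r\,p(\mathbf{e})=\mathrm{e}_r\,p(\mathbf{e})$ for the operators \eqref{Drf}, and concludes $\mathbf{e}\in\mathbb{E}^{(n,m)}_0$ since $p_0(\mathbf{e})=1\neq 0$; conversely, for $\mathbf{e}\in\mathbb{E}^{(n,m)}_0$ the eigenvalue equations yield \eqref{fPieri}, and the uniqueness of the solution of the recurrence \eqref{rec:a}--\eqref{psi} then forces $P_\mu(\mathbf{e})=0$ whenever $d_\mu=m+1$. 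You instead invoke the abstract correspondence between $V(J)$, the characters of the finite-dimensional quotient algebra, and the joint point spectrum of the multiplication operators, and identify the latter with $\mathbb{E}^{(n,m)}_0$ via the transpose-similarity $\tilde D_r=CM_r^TC^{-1}$, which is indeed exactly the content of \eqref{gauge} combined with $c_\kappa=c_{\underline{\kappa}}$. This buys a more structural explanation (it essentially establishes Corollary \ref{iso:cor} along the way), at the price of needing Proposition \ref{R-basis:prp}, the simultaneous diagonalizability of the $D_r$, and an extra dimension count together with the multiplicity-freeness $|\mathbb{E}^{(n,m)}_0|=\binom{n-1+m}{m}$ to settle rational $\mathrm{g}$, whereas the paper's recurrence argument treats all $\mathrm{g}\in(0,\infty)$ uniformly once Proposition \ref{P-reg:prp} is in place. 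One small correction: the analyticity of $\mathrm{g}\mapsto\mathbf{e}_\nu$ that your continuation step requires is not what Eq. \eqref{evR:c} records (that is analyticity in $p$); it is supplied by the normality of the $D_r$ and Kato's perturbation theory, as the paper points out in the proof of Theorem \ref{V-algebra:thm}.
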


\begin{proof}
Upon specializing the parameters in the Pieri rule  \eqref{esl-pieri:r} with the aid of Proposition \ref{P-reg:prp}, one sees that for
$\lambda\in\Lambda^{(n,m)}_0$, $\mathrm{g}\in (0,\infty)$ and $\mathbf{e}\in V(\mathcal{I}^{(n,m)})$ \eqref{sv-def}:
\begin{align}\label{fPieri}
& \mathrm{e}_r P_\lambda  (\mathbf{e};{\textstyle \frac{2\pi}{m+n\mathrm{g}}},\mathrm{g};p)  = \\
&\sum_{\substack{\lambda\subset\nu\subset\lambda+1^{n} \\ |\nu|=|\lambda |+r ,\,  d_\nu\leq m}}   \psi^\prime_{\nu/\lambda}  ({\textstyle \frac{2\pi}{m+n\mathrm{g}}},\mathrm{g};p)
P_{\underline{\nu}} (\mathbf{e};{\textstyle \frac{2\pi}{m+n\mathrm{g}}},\mathrm{g};p) \quad\text{for}\ 1\leq r<n. \nonumber
\end{align}
As in the proof of Theorem \ref{e-polynomials:thm}, we can now employ Eq. \eqref{gauge} and Lemma \ref{c-reg:lem} to rewrite Eq. \eqref{fPieri} in the form
$D_r  p(\mathbf{e};{\textstyle \frac{2\pi}{m+n\mathrm{g}}},\mathrm{g};p) = \mathrm{e}_{r}   p (\mathbf{e};{\textstyle \frac{2\pi}{m+n\mathrm{g}}},\mathrm{g};p) $ with
$D_r$ and  $p_\mu (\mathbf{e};{\textstyle \frac{2\pi}{m+n\mathrm{g}}},\mathrm{g};p) $ taken from Eq. \eqref{Drf} and  Eqs. \eqref{p}, \eqref{c}, respectively.
Since $ p_0(\mathbf{e};{\textstyle \frac{2\pi}{m+n\mathrm{g}}},\mathrm{g};p) =1\neq 0$, this implies that $\mathbf{e}$ must belong to the joint spectrum $\mathbb{E}^{(n,m)}_0$ \eqref{sv-evR} of the operators $D_1,\ldots ,D_{n-1}$ in $\ell^2 (\Lambda^{(n,m)}_0,\Delta)$. Reversely, if  we assume that $\mathbf{e}\in \mathbb{E}^{(n,m)}_0$ \eqref{sv-evR} then
the eigenvalue equation entails that Eq. \eqref{fPieri} holds for $1\leq r<n$. The recurrence relations \eqref{rec:a}--\eqref{psi} thus yield in this situation that
$P_\mu(\mathbf{e};{\textstyle \frac{2\pi}{m+n\mathrm{g}}},\mathrm{g};p)=0$ if $d_\mu =m+1$   (where we use that $\psi^\prime_{\lambda+1^r/\lambda}  ({\textstyle \frac{2\pi}{m+n\mathrm{g}}},\mathrm{g};p)=1$ and that  the solution of the recurrence is unique). In other words,
the vector of joint eigenvalues $\mathbf{e}$ necessarily  lies on the zero locus $V(\mathcal{I}^{(n,m)})$ \eqref{sv-def} of $\mathcal{I}^{(n,m)}$ \eqref{Inm}.
\end{proof}

\subsection{Verlinde algebra}
Let us now define the \emph{Verlinde algebra} associated with the elliptic Ruijsenaars system as
the $\binom{n-1+m}{m}$-dimensional algebra of complex functions on the joint spectrum $\mathbb{E}^{(n,m)}_0$ \eqref{sv-evR}:
\begin{equation}\label{V-alg}
\mathcal{F}_0^{(n,m)}=\{ f: \mathbb{E}^{(n,m)}_0\to \mathbb{C} \} ,
\end{equation}
and let us write $\mathcal{R}_{0,\mathbb{C}}^{(n)}$ and $\mathcal{R}_{0,\mathbb{C}}^{(n,m)}$ for the algebras obtained by complexifying
$\mathcal{R}_{0}^{(n)}$ \eqref{R0} and $\mathcal{R}_{0}^{(n,m)}$ \eqref{f-ring}. 

\begin{proposition}[Basis for $\mathcal{F}_0^{(n,m)}$]\label{V-basis:prp}
For $\alpha=\frac{2\pi}{m+n\mathrm{g}}$ with  $\mathrm{g}>0$, the restrictions of the polynomials $P_\mu  (\mathbf{e};{\textstyle \frac{2\pi}{m+n\mathrm{g}}},\mathrm{g};p)$, $\mu\in\Lambda^{(n,m)}_0$ on the joint spectrum $\mathbb{E}^{(n,m)}_0$ \eqref{sv-evR} 
constitute a basis for the Verlinde algebra $\mathcal{F}_0^{(n,m)}$ \eqref{V-alg}.
\end{proposition}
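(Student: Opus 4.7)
The plan is to exhibit an invertible square matrix of values $P_\mu(\mathbf{e}_\nu)$ and then conclude by a dimension count. To begin, I would note that $\mathcal{F}_0^{(n,m)}$ is the algebra of all complex functions on the finite spectrum $\mathbb{E}_0^{(n,m)}$, so $\dim_{\mathbb{C}}\mathcal{F}_0^{(n,m)}=|\mathbb{E}_0^{(n,m)}|$. By Proposition \ref{sv:prp} combined with the multiplicity-freeness recalled in \eqref{evR:a}, this cardinality equals $\binom{n-1+m}{m}=|\Lambda_0^{(n,m)}|$. Consequently, the assertion reduces to showing that the restrictions $\mathbf{e}_\nu \mapsto P_\mu(\mathbf{e}_\nu;\tfrac{2\pi}{m+n\mathrm{g}},\mathrm{g};p)$, indexed by $\mu\in\Lambda_0^{(n,m)}$, are linearly independent in $\mathcal{F}_0^{(n,m)}$, i.e. that the square matrix $M=\bigl(P_\mu(\mathbf{e}_\nu)\bigr)_{\mu,\nu\in\Lambda_0^{(n,m)}}$ is invertible.

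Next, I would pass to the normalized polynomials $p_\mu = c_\mu P_\mu$ and observe, via Lemma \ref{c-reg:lem}, that $c_\mu>0$, so invertibility of $M$ is equivalent to invertibility of $M'=\bigl(p_\mu(\mathbf{e}_\nu)\bigr)_{\mu,\nu}$. The $\nu$-th column of $M'$ is precisely the eigenfunction $p(\mathbf{e}_\nu)\in\ell^2(\Lambda_0^{(n,m)},\Delta)$ of the truncated elliptic Ruijsenaars operators $D_1,\dots,D_{n-1}$ of \eqref{Drf}, corresponding to the joint eigenvalue $\mathbf{e}_\nu$, as encoded in \eqref{evR:b}.

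The main step is the linear independence of these columns, for which I would invoke the results imported from \cite{die-gor:elliptic} that $D_1,\dots,D_{n-1}$ constitute a commuting family of \emph{normal} operators on $\ell^2(\Lambda_0^{(n,m)},\Delta)$ with multiplicity-free joint spectrum $\mathbb{E}_0^{(n,m)}$. By elementary finite-dimensional spectral theory, the joint eigenfunctions $p(\mathbf{e}_\nu)$, $\nu\in\Lambda_0^{(n,m)}$, attached to distinct joint eigenvalues are then pairwise orthogonal with respect to $\langle\cdot,\cdot\rangle_\Delta$, and they are nonzero since $p_0(\mathbf{e}_\nu)=1$; hence they are linearly independent. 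Therefore $M'$, and with it $M$, is invertible, and the restrictions of the $P_\mu$'s form a basis of $\mathcal{F}_0^{(n,m)}$ by the dimension count. The argument encounters no genuine technical obstacle: everything is a clean linear-algebra consequence of Proposition \ref{sv:prp}, Lemma \ref{c-reg:lem}, and the normality and spectral-simplicity statements of \cite{die-gor:elliptic}.
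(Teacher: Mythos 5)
Your proposal is correct and follows essentially the same route as the paper: the key point in both is that the matrix $\bigl(P_\mu(\mathbf{e}_\nu)\bigr)_{\mu,\nu\in\Lambda_0^{(n,m)}}$ has full rank because (up to the positive normalization constants $c_\mu$) its columns are the joint eigenfunctions $p(\mathbf{e}_\nu)$, which form an orthogonal basis of $\ell^2(\Lambda_0^{(n,m)},\Delta)$ by the normality and spectral simplicity established in \cite{die-gor:elliptic}. You merely spell out the dimension count and the derivation of orthogonality from normality that the paper compresses into a citation of \cite[Theorem 8]{die-gor:elliptic}.
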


\begin{proof}
It is immediate from the fact that the eigenfunctions $p(\mathbf{e}_\nu)$, $\nu\in\Lambda^{(n,m)}_0$ of the normal operators $D_1,\ldots, D_{n-1}$ \eqref{Drf} provide an orthogonal basis
for the Hilbert space $\ell^2(\Lambda^{(n,m)}_0,\Delta)$ (cf. \cite[Theorem 8]{die-gor:elliptic}) that
the square matrix $[ P_\mu (\mathbf{e}_\nu)]_{\mu,\nu\in\Lambda^{(n,m)}_0}$ is of full rank. 
\end{proof}

It is plain from Propositions \ref{sv:prp} and \ref{V-basis:prp} that for $\mathrm{g}\in (0,\infty)$
the vanishing ideal $\mathcal{I}(\mathbb{E}^{(n,m)}_0)=\{ P\in \mathcal{R}^{(n)}_0\mid P(\mathbf{e})=0,\, \forall \mathbf{e}\in \mathbb{E}^{(n,m)}_0\}$ is equal
to $\mathcal{I}^{(n,m)}$ \eqref{Inm}.  It means that  in this situation
the Verlinde algebra $\mathcal{F}_0^{(n,m)}$ and the
$\widehat{\mathfrak{su}}(n)_m$ elliptic fusion algebra  $\mathcal{R}_{0,\mathbb{C}}^{(n,m)}$ are isomorphic, as the kernel of
the evaluation homomorphism $P(\mathbf{e})\to P(\mathbf{e}_\nu)$ from $\mathcal{R}_{0,\mathbb{C}}^{(n)}$ onto $\mathcal{F}_0^{(n,m)}$
coincides with the complexification of $\mathcal{I}^{(n,m)}$ \eqref{Inm}.

\begin{corollary}[$\mathcal{R}_{0,\mathbb{C}}^{(n,m)}\cong\mathcal{F}_0^{(n,m)}$]\label{iso:cor}
For $\alpha=\frac{2\pi}{m+n\mathrm{g}}$ with  $\mathrm{g}\in (0,\infty)$,
the evaluation homomorphism $P(\mathbf{e})\to P(\mathbf{e}_\nu)$ from $\mathcal{R}_{0,\mathbb{C}}^{(n)}$ onto $\mathcal{F}_0^{(n,m)}$ induces an algebra isomorphism
$[P]\to P(\mathbf{e}_\nu)$ from $\mathcal{R}_{0,\mathbb{C}}^{(n,m)}$ onto $\mathcal{F}_0^{(n,m)}$.
\end{corollary}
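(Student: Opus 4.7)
The plan is to deduce the isomorphism directly from the first isomorphism theorem, using Propositions \ref{sv:prp}, \ref{R-basis:prp} and \ref{V-basis:prp} together with a dimension count. First I would consider the evaluation map
\[
\mathrm{ev}: \mathcal{R}_{0,\mathbb{C}}^{(n)}\to\mathcal{F}_0^{(n,m)},\qquad P\mapsto \bigl(\nu\mapsto P(\mathbf{e}_\nu)\bigr),
\]
which is manifestly an algebra homomorphism since pointwise multiplication of functions on $\mathbb{E}_0^{(n,m)}$ corresponds to multiplication of polynomials. By Proposition \ref{sv:prp}, the joint spectrum equals the vanishing locus $V(\mathcal{I}^{(n,m)})$, hence every element of the complexified ideal $\mathcal{I}^{(n,m)}_{\mathbb{C}}$ lies in $\ker(\mathrm{ev})$. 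Consequently $\mathrm{ev}$ descends to a well-defined algebra homomorphism
\[
\overline{\mathrm{ev}}: \mathcal{R}_{0,\mathbb{C}}^{(n,m)}\to\mathcal{F}_0^{(n,m)},\qquad [P]\mapsto \bigl(\nu\mapsto P(\mathbf{e}_\nu)\bigr).
\]

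Next I would establish surjectivity of $\overline{\mathrm{ev}}$ by showing it sends a basis to a basis. By Proposition \ref{R-basis:prp}, the cosets $[P_\mu(\mathbf{e})]$, $\mu\in\Lambda_0^{(n,m)}$, form a $\mathbb{C}$-basis of $\mathcal{R}_{0,\mathbb{C}}^{(n,m)}$. Their images under $\overline{\mathrm{ev}}$ are precisely the restrictions of $P_\mu(\mathbf{e};\frac{2\pi}{m+n\mathrm{g}},\mathrm{g};p)$ to $\mathbb{E}_0^{(n,m)}$, which by Proposition \ref{V-basis:prp} form a basis for $\mathcal{F}_0^{(n,m)}$.

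To conclude, I would invoke a dimension count:
\[
\dim_{\mathbb{C}}\mathcal{R}_{0,\mathbb{C}}^{(n,m)}=|\Lambda_0^{(n,m)}|=\binom{n-1+m}{m}=|\mathbb{E}_0^{(n,m)}|=\dim_{\mathbb{C}}\mathcal{F}_0^{(n,m)},
\]
using again Propositions \ref{R-basis:prp} and \ref{sv:prp}. A surjective linear map between finite-dimensional $\mathbb{C}$-vector spaces of equal dimension is automatically injective, so $\overline{\mathrm{ev}}$ is bijective and hence an algebra isomorphism.

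There is no serious obstacle here once the earlier machinery is available; the entire content of the corollary has in fact been prepared by Propositions \ref{sv:prp} and \ref{V-basis:prp}. The only slightly subtle point is that one does not need to verify separately that $\ker(\mathrm{ev})$ equals $\mathcal{I}^{(n,m)}_{\mathbb{C}}$ (rather than merely containing it, as would follow from Nullstellensatz-type reasoning for the radical), because the basis-to-basis argument together with the matching dimensions forces injectivity of $\overline{\mathrm{ev}}$ and therefore the reverse inclusion $\ker(\mathrm{ev})\subset\mathcal{I}^{(n,m)}_{\mathbb{C}}$ \emph{a posteriori}.
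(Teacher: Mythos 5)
Your proposal is correct and follows essentially the same route as the paper: the paper likewise deduces the isomorphism from Propositions \ref{sv:prp} and \ref{V-basis:prp}, phrasing it as the identification of the kernel of the evaluation homomorphism with the complexification of $\mathcal{I}^{(n,m)}$ (one inclusion from the spectral variety, the other from the linear independence of the restricted basis polynomials). Your reorganization via ``descends, sends basis to basis, dimensions match'' is only a cosmetic variant of the same argument.
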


Corollary \ref{iso:cor} makes it trivial to determine further algebraic properties of $\mathcal{R}_0^{(n,m)}$ and $\mathcal{I}^{(n,m)}$. For instance, it is manifest from the isomorphism that the factor ring $\mathcal{R}_0^{(n,m)}$ does not have nilpotents, i.e. it  is a reduced ring
and $\mathcal{I}^{(n,m)}$ is a radical ideal. However, clearly the factor ring $\mathcal{R}_0^{(n,m)}$
has zero divisors (stemming from functions in $\mathcal{F}^{(n,m)}_0$ with disjoint support), i.e.
it is not an integral domain and the ideal $\mathcal{I}^{(n,m)}$ is therefore neither maximal nor prime.

With the aid of the Verlinde algebra $\mathcal{F}_0^{(n,m)}$ we are now in the position to extend the elliptic fusion rules in Theorem \ref{e-ring:thm} so as to include the case of positive rational values
for $\mathrm{g}$.

\begin{theorem}[Structure Constants of $\mathcal{F}_0^{(n,m)}$]\label{V-algebra:thm}
For $\alpha=\frac{2\pi}{m+n\mathrm{g}}$ with $\mathrm{g}>0$, 
the structure constants of the Verlinde algebra $\mathcal{F}^{(n,m)}_0$ in the basis $P_\mu  (\mathbf{e};{\textstyle \frac{2\pi}{m+n\mathrm{g}}},\mathrm{g};p)$, $\mu\in\Lambda^{(n,m)}_0$ can be expressed in terms of elliptic Littlewood-Richardson coefficients as follows:
\begin{subequations}
\begin{align}\label{V-LR}
P_\lambda  (\mathbf{e};{\textstyle \frac{2\pi}{m+n\mathrm{g}}},\mathrm{g};p)  & P_\mu  (\mathbf{e};{\textstyle \frac{2\pi}{m+n\mathrm{g}}},\mathrm{g};p)
= \\
& \sum_{\substack{  \nu\supset \lambda,\ \nu\supset\mu \\ |\nu |= |\lambda | +|\mu | ,\, d_\nu \leq m    }} 
\left( \lim_{\substack{\mathrm{c}\in\mathbb{R}\setminus\mathbb{Q}\\ \mathrm{c}\to \mathrm{g}}} c^\nu_{\lambda,\mu} \bigl({\textstyle \frac{2\pi}{m+n\mathrm{c}} }, \mathrm{c};p\bigr) \right) P_{\underline{\nu} }   (\mathbf{e};{\textstyle \frac{2\pi}{m+n\mathrm{g}}},\mathrm{g};p) \nonumber
\end{align}
(with $\lambda ,\mu\in \Lambda^{(n,m)}_0$, $ \nu\in\Lambda^{(n)}$ and $\mathbf{e}\in\mathbb{E}^{(n,m)}_0$).

In particular, for  $1\leq r< n$, $\lambda\in\Lambda^{(n,m)}_0$ and  $\mathbf{e}\in\mathbb{E}^{(n,m)}_0$ one has explicitly that
\begin{align}\label{V-pieri:r}
P_\lambda  (\mathbf{e};{\textstyle \frac{2\pi}{m+n\mathrm{g}}},\mathrm{g};p) & P_{1^r}   (\mathbf{e};{\textstyle \frac{2\pi}{m+n\mathrm{g}}},\mathrm{g};p) = \\
&\sum_{\substack{\lambda\subset\nu\subset\lambda+1^{n} \\ |\nu|=|\lambda |+r ,\, d_\nu \leq m}}  
\psi^\prime_{\nu/\lambda}  \bigl({\textstyle \frac{2\pi}{m+n\mathrm{g}} }, \mathrm{g};p\bigr)
P_{\underline{\nu}}  (\mathbf{e};{\textstyle \frac{2\pi}{m+n\mathrm{g}}},\mathrm{g};p) . \nonumber
\end{align}
\end{subequations}
\end{theorem}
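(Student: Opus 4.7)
The Pieri identity~\eqref{V-pieri:r} is essentially already contained in the proof of Proposition~\ref{sv:prp}: the key identity~\eqref{fPieri} was established there for \emph{every} $\mathrm{g}>0$, and the recurrence~\eqref{rec:a}--\eqref{psi} gives $P_{1^r}(\mathbf{e})=\mathrm{e}_r$ (cf.\ the proof of Corollary~\ref{pieri:cor}), so rewriting the left-hand side of~\eqref{fPieri} as $P_\lambda(\mathbf{e})P_{1^r}(\mathbf{e})$ yields~\eqref{V-pieri:r} verbatim, with $\psi^\prime_{\nu/\lambda}$ analytic in $\mathrm{g}>0$ by Lemma~\ref{regularity:lem}.

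For the full Littlewood--Richardson identity~\eqref{V-LR}, I would proceed in two steps. First, for $\mathrm{g}\in\mathbb{R}\setminus\mathbb{Q}$, apply the evaluation isomorphism of Corollary~\ref{iso:cor} to the identity of Theorem~\ref{e-ring:thm}. Since $\mathrm{g}$ is irrational, the elliptic Littlewood--Richardson coefficient $c^\nu_{\lambda,\mu}(\tfrac{2\pi}{m+n\mathrm{g}},\mathrm{g};p)$ on the right-hand side of~\eqref{V-LR} is already well defined, so the limit is trivially itself, and Theorem~\ref{V-algebra:thm} holds on the irrational locus.

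Second, extend to arbitrary $\mathrm{g}>0$ by analytic continuation. By Proposition~\ref{P-reg:prp}, the polynomials $P_\kappa(\mathbf{e};\tfrac{2\pi}{m+n\mathrm{g}},\mathrm{g};p)$ with $d_\kappa\leq m+1$ are analytic in $\mathrm{g}\in(0,\infty)$. The joint eigenvalues $\mathbf{e}_\sigma(\tfrac{2\pi}{m+n\mathrm{g}},\mathrm{g};p)$ of the commuting normal operators $D_1,\dots,D_{n-1}$ from~\eqref{Drf} also depend analytically on $\mathrm{g}$, with labels pinned down by continuity from the trigonometric limit~\eqref{evR:c} and with level crossings ruled out by the multiplicity-freeness result recalled around~\eqref{evR:a}--\eqref{evR:c}. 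Introduce the structure constants $C^\nu_{\lambda,\mu}(\mathrm{g})$ of $\mathcal{F}_0^{(n,m)}$ in the basis of Proposition~\ref{V-basis:prp} via the linear system obtained by evaluating both sides of~\eqref{V-LR} at each $\mathbf{e}_\sigma$ with $\sigma\in\Lambda_0^{(n,m)}$. The coefficient matrix $[P_{\underline{\nu}}(\mathbf{e}_\sigma)]_{\nu,\sigma}$ is invertible by Proposition~\ref{V-basis:prp} and analytic in $\mathrm{g}$, so Cramer's rule exhibits $C^\nu_{\lambda,\mu}(\mathrm{g})$ as an analytic function of $\mathrm{g}\in(0,\infty)$. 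Step one applied to irrational $\mathrm{c}$ in a neighbourhood of $\mathrm{g}$ yields $C^\nu_{\lambda,\mu}(\mathrm{c})=c^\nu_{\lambda,\mu}(\tfrac{2\pi}{m+n\mathrm{c}},\mathrm{c};p)$ for every $\nu$ with $d_\nu\leq m$; taking $\mathrm{c}\to\mathrm{g}$ through irrationals proves that the limit in~\eqref{V-LR} exists and equals the structure constant $C^\nu_{\lambda,\mu}(\mathrm{g})$.

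The main technical obstacle is verifying that the joint eigenvalues $\mathbf{e}_\sigma$ depend analytically on $\mathrm{g}\in(0,\infty)$ with a globally consistent labeling by $\sigma\in\Lambda_0^{(n,m)}$; this rests on the multiplicity-freeness established in \cite{die-gor:elliptic} combined with analytic perturbation theory for the finite-dimensional commuting family~\eqref{Drf}, and on the explicit $p\to 0$ asymptotics~\eqref{evR:c} to fix a branch. Once this continuity is in place the remainder of the argument is linear-algebraic bookkeeping.
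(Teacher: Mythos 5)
Your proposal is correct and follows essentially the same route as the paper: first establish the identity for irrational $\mathrm{g}$ via Theorem~\ref{e-ring:thm} and Corollary~\ref{iso:cor} (with the Pieri case already available for all $\mathrm{g}>0$ from Eq.~\eqref{fPieri} in the proof of Proposition~\ref{sv:prp}), then extend to all $\mathrm{g}>0$ by analytic continuation resting on Lemma~\ref{regularity:lem}, Proposition~\ref{P-reg:prp}, and the analyticity of the spectral points $\mathbf{e}_\sigma$ in $\mathrm{g}$ guaranteed by normality and perturbation theory. The only (harmless) divergence is in the final bookkeeping: you obtain analyticity of the general structure constants by Cramer's rule applied to the evaluation matrix $[P_{\underline{\nu}}(\mathbf{e}_\sigma)]$, which is invertible by Proposition~\ref{V-basis:prp}, whereas the paper expands one factor in monomials and iterates the Pieri rule; both are valid given the same inputs.
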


\begin{proof}
If we pick $\mathrm{g}>0$ irrational, then it is clear from  Theorem \ref{e-ring:thm} and Corollary \ref{iso:cor} that on $\mathbb{E}^{(n,m)}_0$:
\begin{align*}
P_\lambda  (\mathbf{e};{\textstyle \frac{2\pi}{m+n\mathrm{g}}},\mathrm{g};p)  & P_\mu  (\mathbf{e};{\textstyle \frac{2\pi}{m+n\mathrm{g}}},\mathrm{g};p)
= \\
& \sum_{\substack{  \nu\supset \lambda,\ \nu\supset\mu \\ |\nu |= |\lambda | +|\mu | ,\, d_\nu \leq m    }} 
 c^\nu_{\lambda,\mu} \bigl({\textstyle \frac{2\pi}{m+n\mathrm{g}} }, \mathrm{g};p\bigr)
 P_{\underline{\nu} }   (\mathbf{e};{\textstyle \frac{2\pi}{m+n\mathrm{g}}},\mathrm{g};p) ,
 \end{align*}
which establishes in particular the validity of the Pieri rule \eqref{V-pieri:r} in this situation.
Rational values of $\mathrm{g}>0$ can now be included by analytic continuation, which gives rise to Eq. \eqref{V-LR}.
Indeed, in the case of the Pieri rule the analytic continuation is achieved through the explicit formulas with the aid of
Lemma  \ref{regularity:lem} and Proposition \ref{P-reg:prp} (where one also uses that the spectral points $\mathbf{e}_\nu=\mathbf{e}_\nu(\mathrm{g},p)$ are analytic in $\mathrm{g}>0$ by
the normality of $D_r$ \eqref{Drf} in $\ell^2 (\Lambda^{(n,m)}_0, \Delta )$, cf. e.g. \cite[Chapter 2, Theorem 1.10]{kat:perturbation}).
Since  the monomials $P_{1^r}   (\mathbf{e};{\textstyle \frac{2\pi}{m+n\mathrm{g}}},\mathrm{g};p) =\mathrm{e}_r$, $1\leq r<n$ generate $\mathcal{F}_0^{(n,m)}$, the analyticity of the Pieri rule is inherited in turn  by the structure constants of
$\mathcal{F}_0^{(n,m)}$ in general. To see this, it suffices to expand one of the two factors in the product on the LHS of Eq. \eqref{V-LR} in monomials (cf. Proposition \ref{triangularity:prp}). Since the corresponding expansion coefficients are analytic by  virtue of Proposition \ref{P-reg:prp}, iterated application of the Pieri rules then leads to the  analyticity of the structure constants as claimed. 
\end{proof}

\subsection{Verlinde formula}
Let us assume that $\mathrm{g}>0$ and
denote the squared norms of the eigenbasis $p(\mathbf{e}_\nu)$, $\nu\in\Lambda^{(n,m)}_0$
 for the elliptic Ruijsenaars operators $D_r$ \eqref{Drf} in the Hilbert space by $\ell^2(\Lambda^{(n,m)}_0,\Delta)$ by (cf. Eqs. \eqref{ip}, \eqref{delta}):
\begin{equation}\label{pm}
 \hat{\Delta}_\nu= 1/\langle  p(\mathbf{e}_\nu),p(\mathbf{e}_\nu)  \rangle_\Delta \qquad (\nu\in\Lambda_0^{(n,m)}).
\end{equation}
We now endow the Verlinde algebra $\mathcal{F}^{(n,m)}_0$ \eqref{V-alg} with the following inner product:
\begin{equation}\label{dip}
\langle f ,g\rangle_{\hat{\Delta}}= \sum_{\nu\in\Lambda^{(n,m)}_0} f(\mathbf{e}_\nu)  \overline{   g(\mathbf{e}_\nu)  } \hat{\Delta}_\nu\qquad (f,g\in\mathcal{F}^{(n,m)}_0).
\end{equation}
The basis of the Verlinde algebra in Proposition \ref{V-basis:prp} is then orthogonal with respect to this inner product \cite[Corollary 11]{die-gor:elliptic}, viz. one has that
\begin{equation}\label{d-orthogonality}
\forall\lambda,\mu\in\Lambda^{(n,m)}_0:\qquad
\langle  P_\lambda , P_\mu \rangle_{\hat{\Delta}}= 
\begin{cases}
\frac{1}{c_\lambda^2 \, \Delta_\lambda} &\text{if}\ \lambda=\mu ,\\
0& \text{if}\ \lambda\neq \mu ,
\end{cases}
\end{equation}
with $c_\lambda$ and $\Delta_\lambda$ given by Eqs.  \eqref{c} and \eqref{delta}. With the aid of this  dual orthogonality  relation one arrives at a Verlinde  formula expressing
the structure constants
\begin{align}\label{VA-structure}
 P_\lambda\bigl(\mathbf{e};{\textstyle \frac{2\pi}{m+n\mathrm{g}}},\mathrm{g};p\bigr) & P_\mu\bigl(\mathbf{e};{\textstyle \frac{2\pi}{m+n\mathrm{g}}},\mathrm{g};p\bigr)
 = \\
 &
 \sum_{\kappa\in\Lambda^{(n,m)}_0}   \textsc{n}^{\kappa}_{\lambda ,\mu} \bigl( {\textstyle \frac{2\pi}{m+n\mathrm{g}}},\mathrm{g};p\bigr)
     P_\kappa\bigl(\mathbf{e};{\textstyle \frac{2\pi}{m+n\mathrm{g}}},\mathrm{g};p\bigr)\qquad (\mathbf{e}\in\mathbb{E}^{(n,m)}_0) \nonumber
\end{align}
for $\mathcal{F}^{(n,m)}_0$ in terms of a corresponding  $S$-matrix stemming from the elliptic Ruijsenaars model.

\begin{theorem}[Verlinde Formula]\label{verlinde:thm}
For $\mathrm{g}>0$ the structure constants $\textsc{n}^{\kappa}_{\lambda ,\mu} \bigl( {\textstyle \frac{2\pi}{m+n\mathrm{g}}},\mathrm{g};p\bigr)$ \eqref{VA-structure}  for  the Verlinde algebra $\mathcal{F}^{(n,m)}_0$ \eqref{V-alg} are given by
\begin{subequations}
\begin{equation}\label{verlinde}
 \textsc{n}^{\kappa}_{\lambda ,\mu} \bigl( {\textstyle \frac{2\pi}{m+n\mathrm{g}}},\mathrm{g};p\bigr)
 =   \sum_{\nu\in\Lambda^{(n,m)}_0}   \frac{S_{\lambda,\nu} S_{\mu,\nu}  S^{-1}_{\nu,\kappa}}{S_{0,\nu}}   
\end{equation}
with
\begin{equation}\label{S-matrix}
S_{\lambda,\nu} =  S_{\lambda,\nu}  \bigl( {\textstyle \frac{2\pi}{m+n\mathrm{g}}},\mathrm{g};p\bigr)  = 
\frac{P_\lambda\bigl(\mathbf{e}_\nu ;{\textstyle \frac{2\pi}{m+n\mathrm{g}}},\mathrm{g};p\bigr)}{ c_{\nu} \bigl( {\textstyle \frac{2\pi}{m+n\mathrm{g}}},\mathrm{g};p\bigr) }
\end{equation}
and
\begin{equation}\label{S-inverse}
S^{-1}_{\lambda,\nu} = 
c_\lambda^2 \hat{\Delta}_\lambda \overline{S_{\nu,\lambda}  }  c_\nu^2  \Delta_\nu .
\end{equation}
\end{subequations}
\end{theorem}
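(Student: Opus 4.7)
The plan is to derive the Verlinde formula directly from the dual orthogonality relation \eqref{d-orthogonality} combined with the multiplicative rule $P_\lambda P_\mu = \sum_\kappa \textsc{n}^\kappa_{\lambda,\mu} P_\kappa$ evaluated pointwise on the spectral variety $\mathbb{E}^{(n,m)}_0$. Three steps suffice, and none is substantially harder than a careful bookkeeping of the normalizations $c_\nu$.

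First, I would verify that the matrix $S^{-1}$ defined by \eqref{S-inverse} is genuinely the inverse of $S$. Substituting $P_\lambda(\mathbf{e}_\nu)=c_\nu S_{\lambda,\nu}$ into \eqref{d-orthogonality} (and using that $c_\nu$ is real by Lemma \ref{c-reg:lem}) gives
\begin{equation*}
\sum_{\nu\in\Lambda^{(n,m)}_0} c_\nu^2\, S_{\lambda,\nu}\,\overline{S_{\mu,\nu}}\,\hat{\Delta}_\nu \;=\; \frac{\delta_{\lambda,\mu}}{c_\lambda^2\,\Delta_\lambda}.
\end{equation*}
Multiplying through by $c_\mu^2\Delta_\mu$ produces exactly $\sum_\nu S_{\lambda,\nu}S^{-1}_{\nu,\mu}=\delta_{\lambda,\mu}$, so $S$ and $S^{-1}$ are mutual inverses on the finite-dimensional space indexed by $\Lambda^{(n,m)}_0$.

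Second, I evaluate both sides of \eqref{VA-structure} at a spectral point $\mathbf{e}_\nu$. Writing every polynomial value as $c_\nu S_{\cdot,\nu}$ and canceling one factor of $c_\nu$ yields
\begin{equation*}
c_\nu\, S_{\lambda,\nu}\,S_{\mu,\nu} \;=\; \sum_{\kappa\in\Lambda^{(n,m)}_0} \textsc{n}^{\kappa}_{\lambda,\mu}\, S_{\kappa,\nu}.
\end{equation*}
Since $P_0\equiv 1$ and $c_0=1$ (from the empty product in \eqref{c}), the relation $S_{0,\nu}=1/c_\nu$ holds, so the left-hand side equals $S_{\lambda,\nu}S_{\mu,\nu}/S_{0,\nu}$.

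Third, I invert this linear system by applying $S^{-1}$ on the right: multiply both sides by $S^{-1}_{\nu,\kappa'}$, sum over $\nu$, and use the orthogonality $\sum_\nu S_{\kappa,\nu}S^{-1}_{\nu,\kappa'}=\delta_{\kappa,\kappa'}$ from the first step. The right-hand sum collapses to $\textsc{n}^{\kappa'}_{\lambda,\mu}$, while the left-hand side becomes $\sum_\nu S_{\lambda,\nu}S_{\mu,\nu}S^{-1}_{\nu,\kappa'}/S_{0,\nu}$, which is precisely \eqref{verlinde}. I expect the only real obstacle to be purely bookkeeping: carefully tracking the powers of $c_\nu$, the complex conjugations, and the identity $S_{0,\nu}=1/c_\nu$ so that the rewriting of \eqref{d-orthogonality} as $SS^{-1}=I$ comes out clean. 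The existence of the multiplicative expansion \eqref{VA-structure} itself is already guaranteed by Theorem \ref{V-algebra:thm} and Proposition \ref{V-basis:prp}, so no further analyticity or continuation argument is required here.
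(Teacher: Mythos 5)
Your proposal is correct and follows essentially the same route as the paper: both arguments rest on recognizing that the dual orthogonality relation \eqref{d-orthogonality} is exactly the statement $\sum_\nu S_{\lambda,\nu}S^{-1}_{\nu,\mu}=\delta_{\lambda,\mu}$ and then extracting $\textsc{n}^{\kappa}_{\lambda,\mu}$ from \eqref{VA-structure} by this orthogonality (the paper phrases the extraction as pairing against $P_\kappa$ with $\langle\cdot,\cdot\rangle_{\hat\Delta}$ and dividing by $\langle P_\kappa,P_\kappa\rangle_{\hat\Delta}$, which is the same computation as your right-multiplication by $S^{-1}$). Your bookkeeping of the factors $c_\nu$, the identity $S_{0,\nu}=1/c_\nu$, and the reality of $c_\nu$ via Lemma \ref{c-reg:lem} all check out.
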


\begin{proof}
Let us first
observe that it is clear from the orthogonality relation in Eq. \eqref{d-orthogonality} that the inverse of the $S$-matrix $S_{\lambda,\nu}$ \eqref{S-matrix} is given by
the matrix $S^{-1}_{\lambda,\nu} $ \eqref{S-inverse}. Hence, upon pairing both sides of Eq. \eqref{VA-structure} against $P_\kappa\bigl(\mathbf{e};{\textstyle \frac{2\pi}{m+n\mathrm{g}}},\mathrm{g};p\bigr)$, $\kappa\in\Lambda^{(n,m)}_0$
with $\langle \cdot ,\cdot\rangle_{\hat{\Delta}}$ \eqref{dip}, the Verlinde formula readily follows via the orthogonality relation \eqref{d-orthogonality}:
\begin{align*}
\textsc{n}^{\kappa}_{\lambda ,\mu} \bigl( {\textstyle \frac{2\pi}{m+n\mathrm{g}}},\mathrm{g};p\bigr)
 &= \frac{\langle   P_\lambda  P_\mu , P_\kappa  \rangle_{\hat{\Delta}} }{\langle  P_\kappa , P_\kappa \rangle_{\hat{\Delta}} } 
 =c_\kappa^2  \Delta_\kappa \sum_{\nu\in\Lambda^{(n,m)}_0 }  P_\lambda  (\mathbf{e}_\nu )  P_\mu   (\mathbf{e}_\nu )  \overline{ { P_\kappa   (\mathbf{e}_\nu ) }  }   \hat{\Delta}_\nu \\
 &=  \sum_{\nu\in\Lambda^{(n,m)}_0}   \frac{S_{\lambda,\nu} S_{\mu,\nu} S^{-1}_{\nu,\kappa}}{S_{0,\nu}} .
\end{align*}
\end{proof}

It is useful to recall though that we already know from Theorem \ref{V-algebra:thm} that many of the structure constants in Eq. \eqref{VA-structure} in fact vanish:
\begin{align*}
& \textsc{n}^{\kappa}_{\lambda ,\mu} \bigl( {\textstyle \frac{2\pi}{m+n\mathrm{g}}},\mathrm{g};p\bigr)= \\
& \begin{cases}
 \lim_{\substack{\mathrm{c}\in\mathbb{R}\setminus\mathbb{Q}\\ \mathrm{c}\to \mathrm{g}}} c^{\nu}_{\lambda,\mu} \bigl({\textstyle \frac{2\pi}{m+n\mathrm{c}} }, \mathrm{c};p\bigr) 
 &\text{if}\ \kappa=\underline{\nu} \ \text{with}\ \nu\supset \lambda,\,  \nu\supset \mu,\, \text{and}\ |\nu |=|\lambda|+|\mu|,\\
\qquad 0& \text{otherwise}.
\end{cases}
\end{align*}
Notice also that it follows from Eq. \eqref{S-inverse} that the absolute value of the determinant of the $S$-matrix is given by
\begin{equation}
 |\det S |= \Bigl( \prod_{\lambda\in\Lambda^{(n,m)}_0}   c_\lambda^2  \sqrt{\Delta_\lambda \hat{\Delta}_\lambda} \Bigr)^{-1} .
\end{equation}

\subsection{Degenerations}
To see how the structure constants of the  $\widehat{\mathfrak{su}}(n)_m$ Wess-Zumino-Witten fusion ring and its refined deformation are recovered from $\mathcal{F}^{(n,m)}_0$,
let us put
\begin{equation}
q=e^{\frac{2\pi \mathrm{i}}{m+n\mathrm{g}}} \quad\text{and}\  \mathrm{e}_r=(x_1\cdots x_n)^{-r/n} m_{1^r}(x) \quad 1\leq r<n.
\end{equation}
Then for $\mathrm{g}>0$, $-1<p<1$ and $\lambda,\mu \in\Lambda^{(n,m)}_0$ we have that:
\begin{subequations}
\begin{align}\label{Pieri-deg}
&\psi^\prime_{\nu/\lambda}  \bigl({\textstyle \frac{2\pi}{m+n\mathrm{g}} }, \mathrm{g};p\bigr)=\\
&\begin{cases}
 \prod_{\substack{1\leq j<k\leq n\\ \theta_j-\theta_k=-1}} 
 {\textstyle 
  \frac{[\nu_j-\nu_k+\mathrm{g}(k-j+1)]_q}{[\nu_j-\nu_k+\mathrm{g}(k-j)]_q}
 \frac{[\lambda_j-\lambda_k+\mathrm{g}(k-j-1)]_q}{[\lambda_j-\lambda_k+\mathrm{g}(k-j)]_q}   }  &\text{if}\ p=0,\\
1 &\text{if}\ \mathrm{g}=1
\end{cases} \nonumber
\end{align}
(for $\lambda \subset\nu\subset\lambda+1^n$ with $\theta=\nu-\lambda$ and $\underline{\nu}\in\Lambda^{(n,m)}_0$),
\begin{equation}\label{P-deg}
P_\mu\bigl( \mathbf{e} ;{\textstyle \frac{2\pi}{m+n\mathrm{g}}},\mathrm{g};p\bigr)= 
\begin{cases}
(x_1\cdots x_n)^{-|\mu|/n}  P_\mu (x;q,q^g)&\text{if}\ p=0,\\
(x_1\cdots x_n)^{-|\mu |/n}  s_\mu (x)&\text{if}\ \mathrm{g}=1,
\end{cases} 
\end{equation}
and
\begin{align}\label{spectrum-deg}
& \mathrm{e}_{r,\nu}  \bigl( {\textstyle \frac{2\pi}{m+n\mathrm{g}}},\mathrm{g};p\bigr)  = \\
&\begin{cases}
 q^{-r \bigl( \frac{|\nu|}{n}+\frac{(n-1)\mathrm{g}}{2}\bigr)}
m_{1^r} (q^{\nu_1+(n-1)\mathrm{g}},q^{\nu_2+(n-2)\mathrm{g}},\ldots, q^{\nu_{n-1}+\mathrm{g}},1)  &\text{if}\ p=0,\\
q^{-r \bigl( \frac{|\nu|}{n}+\frac{n-1}{2}\bigr)}
m_{1^r} (q^{\nu_1+n-1},q^{\nu_2+n-2},\ldots, q^{\nu_{n-1}+1},1)  &\text{if}\ \mathrm{g}=1
\end{cases}\nonumber
\end{align}
\end{subequations}
(for $\nu\in\Lambda^{(n,m)}_0$).
Indeed, Eqs. \eqref{Pieri-deg} and \eqref{P-deg} follow from Proposition \ref{LRdegenerations:prp} and  Corollary \ref{Pdegenerations:cor} by Lemma \ref{regularity:lem}, while
Eq. \eqref{spectrum-deg} is immediate from Eq. \eqref{evR:c} when $p=0$, and hence also when $\mathrm{g}=1$ (since the $p$-dependence drops out at  $\mathrm{g}=1$).

At $p=0$, Theorem \ref{V-algebra:thm} therefore computes the structure constants of the refined Verlinde algebra  \cite{aga-sha:knot,che:double,kir:inner,nak:refined} in terms of Macdonald's $(q,t)$-Littlewood-Richardson coefficients
(cf. Proposition \ref{LRdegenerations:prp}):
\begin{subequations}
\begin{align}\label{refined-fusion}
& \textsc{n}^{\kappa}_{\lambda ,\mu} \bigl( {\textstyle \frac{2\pi}{m+n\mathrm{g}}},\mathrm{g};0\bigr)= \\
& \begin{cases}
 \lim_{\substack{\mathrm{c}\in\mathbb{R}\setminus\mathbb{Q}\\ \mathrm{c}\to \mathrm{g}}} 
 f^{\nu}_{\lambda,\mu} \bigl( e^{\textstyle \frac{2\pi \mathrm{i}}{m+n\mathrm{c}} }, e^{\textstyle \frac{2\pi  \mathrm{i} \mathrm{c}}{m+n\mathrm{c}} } \bigr) 
 &\text{if}\ \kappa=\underline{\nu} \ \text{with}\ \nu\supset \lambda,\,  \nu\supset \mu,\, \text{and}\ |\nu|=|\lambda|+|\mu|,\\
\qquad 0& \text{otherwise} .
\end{cases} \nonumber
\end{align}
In the case of the Pieri rule this becomes explicitly
\begin{align}\label{refined-pieri}
& \textsc{n}^{\kappa}_{\lambda ,1^r} \bigl( {\textstyle \frac{2\pi}{m+n\mathrm{g}}},\mathrm{g};0\bigr)= \\
& \begin{cases}
\psi^\prime_{\nu/\lambda}  \bigl({\textstyle \frac{2\pi}{m+n\mathrm{g}} }, \mathrm{g};0\bigr) &\text{if}\ \kappa=\underline{\nu} \ \text{with}\  \lambda\subset \nu \subset \lambda+1^n,\, \text{and} \ |\nu|=|\lambda|+r ,\\
\qquad 0& \text{otherwise} .
\end{cases} \nonumber
\end{align}
\end{subequations}

At $\mathrm{g}=1$, on the other hand, we see from the Pieri rule in Theorem \ref{V-algebra:thm} that
\begin{subequations}
\begin{align}\label{WZW-pieri}
 \textsc{n}^{\kappa}_{\lambda ,1^r} \bigl( {\textstyle \frac{2\pi}{m+n}},1;p\bigr) &=
 \textsc{n}^{\kappa}_{\lambda ,1^r} \bigl( {\textstyle \frac{2\pi}{m+n}},1;0\bigr) \\
& \begin{cases}
1 &\text{if}\ \kappa=\underline{\nu} \ \text{with}\  \lambda\subset \nu\subset \lambda+1^n,\, \text{and} \ |\nu |=|\lambda|+r ,\\
0& \text{otherwise} .
\end{cases} \nonumber
\end{align}
More generally, Theorem \ref{V-algebra:thm} thus states that
\begin{align}\label{fusion}
& \textsc{n}^{\kappa}_{\lambda ,\mu} \bigl( {\textstyle \frac{2\pi}{m+n}},\mathrm{1};p\bigr)= \textsc{n}^{\kappa}_{\lambda ,\mu} \bigl( {\textstyle \frac{2\pi}{m+n}},\mathrm{1};0\bigr) =\\
& \begin{cases}
 \lim_{\substack{\mathrm{c}\in\mathbb{R}\setminus\mathbb{Q}\\ \mathrm{c}\to \mathrm{1}}} 
 f^{\nu}_{\lambda,\mu} \bigl( e^{\textstyle \frac{2\pi \mathrm{i}}{m+n\mathrm{c}} }, e^{\textstyle \frac{2\pi  \mathrm{i} \mathrm{c}}{m+n\mathrm{c}} } \bigr) 
 &\text{if}\ \kappa=\underline{\nu} \ \text{with}\ \nu\supset \lambda,\,  \nu\supset \mu,\, \text{and}\ |\nu |=|\lambda|+|\mu|,\\
\qquad 0& \text{otherwise},
\end{cases}  \nonumber
\end{align}
\end{subequations}
which retrieves the structure constants of the 
$\widehat{\mathfrak{su}}(n)_m$ Wess-Zumino-Witten fusion ring \cite{dif-mat-sen:conformal,fuc:affine} through Macdonald's $(q,t)$-Littlewood-Richardson coefficients.
In other words, \emph{$(q,t)$-deformation can be used as a vehicle for computing structure constants in the fusion ring (i.e. modulo the fusion ideal) by degeneration from
(deformed) Littlewood-Richardson coefficients in the ring of symmetric polynomials itself}, as was previously pointed out in  \cite{die:sunm}.

Finally, we see from Eqs. \eqref{P-deg}, \eqref{spectrum-deg} that the corresponding degenerations of the elliptic $S$-matrix \eqref{S-matrix} are given by
\begin{subequations}
\begin{align}\label{RWZW-S}
  S_{\lambda,\nu}  \bigl( {\textstyle \frac{2\pi}{m+n\mathrm{g}}},\mathrm{g};0\bigr)  = &q^{-\frac{1}{n}|\lambda | |\nu |-\frac{1}{2}(n-1)( | \lambda |+|\nu |)\mathrm{g} } \\
& \times  P_\lambda ( q^{\nu_1+(n-1)\mathrm{g}}, q^{\nu_2+(n-2)\mathrm{g}},\ldots ,q^{\nu_{n-1}+\mathrm{g}},1; q,q^{\mathrm{g}}) 
\nonumber \\ &\times  
P_\nu ( q^{(n-1)\mathrm{g}}, q^{(n-2)\mathrm{g}},\ldots ,q^{\mathrm{g}},1 ; q,q^{\mathrm{g}}) , \nonumber
 \end{align}
 and 
 \begin{equation}\label{WZW-S}
  S_{\lambda,\nu}  \bigl( {\textstyle \frac{2\pi}{m+n}},1;p\bigr) = S_{\lambda,\nu}  \bigl( {\textstyle \frac{2\pi}{m+n}},1;0\bigr) 
 \frac{ c_{\nu} \bigl( {\textstyle \frac{2\pi}{m+n}},1;0\bigr)  }{ c_{\nu} \bigl( {\textstyle \frac{2\pi}{m+n}},1;p\bigr) } 
 \end{equation}
 \end{subequations}
  with
 \begin{align*}
 S_{\lambda,\nu}  \bigl( {\textstyle \frac{2\pi}{m+n}},1;0\bigr) =&  q^{-\frac{1}{n}|\lambda | |\nu |-\frac{1}{2}(n-1)( | \lambda |+|\nu |)} \\
& \times  s_\lambda ( q^{\nu_1+n-1}, q^{\nu_2+n-2},\ldots ,q^{\nu_{n-1}+1},1) \nonumber \\
&\times  s_\nu ( q^{n-1}, q^{n-2},\ldots ,q,1) . \nonumber
 \end{align*}
 Notice that the gauge factor multiplying the $S$-matrix in Eq. \eqref{WZW-S} from the right cancels in the Verlinde formula \eqref{verlinde}, and  observe also that
 we have  rewritten   the final expression with the aid of the principal specialization formulas for the Macdonald and Schur polynomials  \cite[Chapter VI,\, \S 6]{mac:symmetric}:
 \begin{equation*}
 q^{-\frac{1}{2}  |\nu | (n-1)\mathrm{g}}P_\nu ( q^{(n-1)\mathrm{g}}, q^{(n-2)\mathrm{g}},\ldots ,q^{\mathrm{g}},1; q,q^{\mathrm{g}})  =
 \prod_{1\leq j<k\leq n} {\textstyle \frac{[ (k-j+1)\mathrm{g}]_{q,\nu_j-\nu_k}}{[ (k-j)\mathrm{g}]_{q, \nu_j-\nu_k}} } 
 \end{equation*}
($=1/ c_\nu(\frac{2\pi}{m+n\mathrm{g}},\mathrm{g}; 0)$) and
 \begin{equation*}
 q^{-\frac{1}{2}  |\nu | (n-1)} s_\nu ( q^{n-1}, q^{n-2},\ldots ,q,1)  =
 \prod_{1\leq j<k\leq n} {\textstyle \frac{[ k-j+\nu_j-\nu_k]_{q}}{[ k-j ]_{q} } }
 \end{equation*}
($=1/ c_\nu(\frac{2\pi}{m+n},1; 0)$),
where $[z]_{q,k}=\prod_{0\leq l<k} [z+l]_q$ with $ [z]_{q, 0}=1$.

We thus observe that $S_{\lambda,\nu}  \bigl( {\textstyle \frac{2\pi}{m+n}},1;0\bigr) $
coincides up to normalization  with (the adjoint of) the Kac-Peterson modular $S$-matrix for $\widehat{\mathfrak{su}}(n)_m$  (cf. e.g.
 \cite[Theorem 13.8]{kac:infinite}, \cite[Equation (14.217)]{dif-mat-sen:conformal} and \cite[Proposition 6.15]{kor-str:slnk}), while
 the $S$-matrix $S_{\lambda,\nu}  \bigl( {\textstyle \frac{2\pi}{m+n\mathrm{g}}},\mathrm{g};0\bigr) $
recovers in turn the trigonometric $S$-matrix from the refined Chern-Simons theory of knot invariants (cf. e.g.
\cite[Section 5.2]{aga-sha:knot}, \cite[Section 2.6]{gor-neg:refined},  \cite[Section 5]{kir:inner} and \cite[Section 3]{nak:refined}).
In other words, Theorem \ref{verlinde:thm} reproduces in these two situations, respectively, the classical Verlinde formula for the structure constants of the 
$\widehat{\mathfrak{su}}(n)_m$ Wess-Zumino-Witten fusion ring (cf.  e.g. \cite[Exercise 13.34]{kac:infinite}, \cite[Equation (16.3)]{dif-mat-sen:conformal}, \cite[Equation (6.40]{kor-str:slnk}) as well as its  refinement  stemming from Macdonald theory \cite{aga-sha:knot,die:sunm}. The conventional normalization of the $S$-matrix corresponds to
the following rescaling:
\begin{subequations}
\begin{equation}
S_{\lambda,\nu}  \bigl( {\textstyle \frac{2\pi}{m+n\mathrm{g}}},\mathrm{g};p\bigr)\to 
\mathrm{n}^{-\frac{1}{2}}  \bigl( {\textstyle \frac{2\pi}{m+n\mathrm{g}}},\mathrm{g};p\bigr)
S_{\lambda,\nu}  \bigl( {\textstyle \frac{2\pi}{m+n\mathrm{g}}},\mathrm{g};p\bigr)
\end{equation}
with
\begin{equation}
\mathrm{n} \bigl( {\textstyle \frac{2\pi}{m+n\mathrm{g}}},\mathrm{g};p\bigr)= \sum_{\lambda\in\Lambda^{(n,m)}_0}  \Delta_\lambda \bigl({\textstyle \frac{2\pi}{m+n\mathrm{g}}},\mathrm{g};p\bigr) .
\end{equation}
\end{subequations}
Notice in this connection that at the classical parameter specialization $(\mathrm{g},p)=(1,0)$ this normalization recovers (the adjoint of) the unitary Kac-Peterson
$S$-matrix for $\widehat{\mathfrak{su}}(n)_m$:
\begin{equation*}
\mathrm{n} \bigl( {\textstyle \frac{2\pi}{m+n}},1;0\bigr)=
 \sum_{\lambda\in\Lambda^{(n,m)}_0} 
\prod_{1\leq j<k\leq n}
 {\textstyle
\frac{[\lambda_j-\lambda_k+k-j]_q^2}{[k-j]_q^2} } =  \sum_{\lambda\in\Lambda^{(n,m)}_0} \prod_{1\leq j<k\leq n}
 S_{0,\lambda }^2 \bigl( {\textstyle \frac{2\pi}{m+n}},1;0\bigr)  
 \end{equation*}
(so
$\mathrm{n} \bigl( {\textstyle \frac{2\pi}{m+n}},1;0\bigr) =  \frac{(2\sin \frac{\pi}{m+n})^{-n(n-1)} n(n+m)^{n-1}}{ \prod_{1\leq j<k\leq n} [k-j]_q^2} $, cf. e.g. \cite[Proposition 6.15]{kor-str:slnk}).

\section*{Acknowledgements}
Helpful feedback from Stephen Griffeth  is gratefully acknowledged.

The work of JFvD was supported in part by the {\em Fondo Nacional de Desarrollo
Cient\'{\i}fico y Tecnol\'ogico (FONDECYT)} Grant \# 1210015. TG was supported in part by the NKFIH Grant K134946.

\bigskip\noindent
\parbox{.135\textwidth}{\begin{tikzpicture}[scale=.03]
\fill[fill={rgb,255:red,0;green,51;blue,153}] (-27,-18) rectangle (27,18);  
\pgfmathsetmacro\inr{tan(36)/cos(18)}
\foreach \i in {0,1,...,11} {
\begin{scope}[shift={(30*\i:12)}]
\fill[fill={rgb,255:red,255;green,204;blue,0}] (90:2)
\foreach \x in {0,1,...,4} { -- (90+72*\x:2) -- (126+72*\x:\inr) };
\end{scope}}
\end{tikzpicture}} \parbox{.85\textwidth}{This project has received funding from the European Union's Horizon 2020 research and innovation programme under the Marie Sk{\l}odowska-Curie grant agreement No 795471.}

\bibliographystyle{amsplain}

\begin{thebibliography}{00000000}

\bibitem[AS15]{aga-sha:knot} M. Aganagic and S.  Shakirov, Knot homology and refined Chern-Simons index, Comm. Math. Phys. {\bf 333} (2015), 187--228. 

\bibitem[AGP16]{and-guk-pei:verlinde} J.E. Andersen, S. Gukov, and D. Pei, The Verlinde formula for Higgs bundles, arXiv:1608.01761.

\bibitem[BDM15]{blo-des-mat:supersymmetric} O. Blondeau-Fournier, P. Desrosiers, and P. Mathieu,
Supersymmetric Ruijsenaars-Schneider model, Phys. Rev. Lett. {\bf 114} (2015), 121602.

\bibitem[B68]{bou:groupes} N. Bourbaki, {\em Groupes et alg\`ebres de Lie,
Chapitres 4--6}, Hermann, Paris, 1968.

\bibitem[C05]{che:double} I. Cherednik,
{\em Double Affine Hecke Algebras},
London Mathematical Society Lecture Note Series {\bf 319},
Cambridge University Press, Cambridge, 2005.

\bibitem[C16]{che:daha} I.  Cherednik, DAHA-Jones polynomials of torus knots, Selecta Math. (N.S.) {\bf 22} (2016), 1013--1053.

\bibitem[D20]{die:sunm} J.F. van Diejen, $\widehat{\mathfrak{su}}(n)_m$ Wess-Zumino-Witten fusion rules via  Macdonald polynomials, Preprint April 2020.

\bibitem[DG21]{die-gor:elliptic}  J.F. van Diejen and T. G\"orbe,
Elliptic Ruijsenaars difference operators on bounded partitions, arXiv:2106.06512

\bibitem[DV98]{die-vin:quantum}  J.F. van Diejen and L. Vinet, The quantum dynamics of the compactified trigonometric Ruijsenaars-Schneider model, Comm. Math. Phys. {\bf 197} (1998), 33--74. 

\bibitem[DMS97]{dif-mat-sen:conformal}  P. Di Francesco, P. Mathieu, and D. S\'en\'echal, {\em Conformal Field Theory}, Graduate Texts in Contemporary Physics, Springer-Verlag, 1997.

\bibitem[EK95]{eti-kir:affine} P.I. Etingof and A.  Kirillov Jr.,
On the affine analogue of Jack and Macdonald polynomials,
Duke Math. J. {\bf 78} (1995), 229--256. 

\bibitem[FG16]{feh-gor:trigonometric} L. Feh\'er and T.F.  G\"orbe,  Trigonometric and elliptic Ruijsenaars-Schneider systems on the complex projective space, Lett. Math. Phys. {\bf 106} (2016),  1429--1449.


\bibitem[F95]{fuc:affine}  J. Fuchs, \emph{Affine Lie Algebras and Quantum Groups. An Introduction, with Applications in Conformal Field Theory.} Corrected reprint,
Cambridge Monographs on Mathematical Physics. Cambridge University Press, Cambridge, 1995. 

\bibitem[G91]{gep:fusion} D. Gepner, Fusion rings and geometry, Comm. Math. Phys. {\bf 141} (1991), 381--411.

\bibitem[GN91]{goo-nak:fusion} F.M. Goodman and T. Nakanishi,
Fusion algebras in integrable systems in two dimensions,
Phys. Lett. B {\bf 262} (1991), 259--264. 

\bibitem[GW90]{goo-wen:littlewood}  F.M. Goodman and H. Wenzl, Littlewood-Richardson coefficients for Hecke algebras at roots of unity, Adv. Math. {\bf 82} (1990), 244--265.

\bibitem[GH18]{gor-hal:quantization} T.F. G\"orbe and M. Halln\"as, Quantization and explicit diagonalization of new compactified trigonometric Ruijsenaars-Schneider systems, J. Integrable Syst. {\bf 3} (2018), no. 1, xyy015, 29 pp. 

\bibitem[GN15]{gor-neg:refined} E. Gorsky and A. Negu\c{t}, Refined knot invariants and Hilbert schemes, J. Math. Pures Appl. {\bf 104} (2015),  403--435. 

\bibitem[H16]{hag:combinatorics} J. Haglund,
The combinatorics of knot invariants arising from the study of Macdonald polynomials. In: \emph{Recent Trends in Combinatorics}, 
A. Beveridge, J.R. Griggs, L. Hogben, G. Musiker and P. Tetali (eds.), The IMA Volumes in Mathematics and its Applications {\bf 159}, Springer, Cham, 2016,  579--600.

\bibitem[K90]{kac:infinite} V.G. Kac, {\em Infinite-dimensional Lie Algebras}, Third Edition, Cambridge University Press, Cambridge, 1990.

\bibitem[K95]{kat:perturbation} T. Kato, \emph{Perturbation Theory for Linear Operators}, Reprint of the 1980 edition, Classics in Mathematics, Springer-Verlag, Berlin, 1995. 


\bibitem[K96]{kir:inner} A.A. Kirillov Jr., 
On an inner product in modular tensor categories,
J. Amer. Math. Soc. {\bf 9} (1996), 1135--1169. 

\bibitem[K13]{kor:cylindric} C. Korff, Cylindric versions of specialised Macdonald functions and a deformed Verlinde algebra, Comm. Math. Phys, {\bf 318} (2013), 173--246.

\bibitem[KS10]{kor-str:slnk} C. Korff and C. Stroppel,
The {$\widehat{\mathfrak{sl}}(n)_k$}-{WZNW} fusion ring: a combinatorial construction and a realisation as quotient of quantum cohomology,
Adv. Math. {\bf 225} (2010), 200--268. 

\bibitem[LNS20]{lan-nou-shi:construction} E. Langmann, M. Noumi, and J. Shiraishi, Construction of eigenfunctions for the elliptic Ruijsenaars difference operators,
arXiv:2012.05664

\bibitem[M95]{mac:symmetric}   I.G. Macdonald, {\em Symmetric Functions and
Hall Polynomials}, Second Edition, Clarendon Press, Oxford, 1995.

\bibitem[M00]{mac:orthogonal}  I.G. Macdonald, Orthogonal polynomials associated
with root systems, S\'em. Lothar. Combin. {\bf 45} (2000/01), Art. B45a.

\bibitem[M03]{mac:affine} I.G. Macdonald,
{\em Affine Hecke Algebras and Orthogonal Polynomials},
Cambridge University Press, Cambridge, 2003.


\bibitem[MMZ21]{mir-mor-zen:duality} A. Mironov, A. Morozov, and Y. Zenkevich, Duality in elliptic Ruijsenaars system and elliptic symmetric functions, 
Eur. Phys. J. C {\bf 81} (2021), 461.

\bibitem[N14]{nak:refined} H. Nakajima,
Refined Chern-Simons theory and Hilbert schemes of points on the plane. In: {\em Perspectives in Representation Theory}, P. Etingof, M. Khovanov, and A. Savage (eds.), 
Contemp. Math. {\bf 610}, Amer. Math. Soc., Providence, RI, 2014, 305--331.

\bibitem[OY14]{oku-yos:gauged} S. Okuda and Y. Yoshida, G/G gauged WZW-matter model, Bethe Ansatz for q-boson model and commutative Frobenius algebra, J. High Energ. Phys. {\bf 2014}:3 (2014), https://doi.org/10.1007/JHEP03(2014)003

 \bibitem[OLBC10]{olv-loz-boi-cla:nist} F.W.J. Olver, D.W. Lozier, R.F. Boisvert and C.W. Clark. (eds.),
{\em NIST Handbook of Mathematical Functions},
Cambridge University Press, Cambridge, 2010.

\bibitem[P07]{pro:lie} C. Procesi,
\emph{Lie Groups. 
An Approach through Invariants and Representations}, Springer, New York, 2007.

\bibitem[RSV18]{rai-sun-var:affine} E.M.  Rains, Y. Sun, and A. Varchenko, Affine Macdonald conjectures and special values of Felder-Varchenko functions, Selecta Math. (N.S.) {\bf 24}
 (2018), 1549--1591. 

\bibitem[R87]{rui:complete} S.N.M.  Ruijsenaars, Complete integrability of relativistic Calogero-Moser systems and elliptic function identities, Comm. Math. Phys. {\bf 110} (1987), 191--213.


 
 \bibitem[R99]{rui:systems} S.N.M. Ruijsenaars,
Systems of Calogero-Moser type. In: \emph{Particles and Fields} (Banff, AB, 1994),
G.W. Semenoff and L. Vinet (eds.),
CRM Ser. Math. Phys., Springer, New York, 1999, 251--352.

\bibitem[T04]{tel:k-theory} C. Teleman, 
$K$-theory and the moduli space of bundles on a surface and deformations of the Verlinde algebra. In: {\em Topology, Geometry and Quantum Field Theory}, U. Tillmann (ed.),
London Math. Soc. Lecture Note Ser. {\bf 308}, Cambridge Univ. Press, Cambridge, 2004, 358--378.

\bibitem[TW09]{tel-woo:index} C. Teleman and C.T. Woodward,
The index formula for the moduli of $G$-bundles on a curve,
Ann. of Math. (2) {\bf 170} (2009), 495--527. 

\bibitem[TUY89]{tsu-ue-yam:conformal} A. Tsuchiya, K.  Ueno, and Y. Yamada, Conformal field theory on universal family of stable curves with gauge symmetries. In: \emph{Integrable Systems in Quantum Field Theory and Statistical Mechanics}, M. Jimbo, T. Miwa, and A. Tsuchiya (eds.),
Adv. Stud. Pure Math. {\bf 19}, Academic Press, Boston, MA, 1989, 459--566.


\end{thebibliography}

\end{document}